\theoremstyle{plain}
\newtheorem{theorem}{Theorem}
\theoremstyle{definition}
\newtheorem{definition}{Definition}
\newtheorem{example}{Example}
\newtheorem{proposition}{Proposition}
\newtheorem{lemma}{Lemma}
\newtheorem{corollary}{Corollary}
\newtheorem{remark}{Remark}
\newtheorem{conjecture}{Conjecture}
\title{The CN matrix of a pure braid projection}
\author{Yuko Ozawa\thanks{Graduate School of Advanced Mathematical Sciences, Meiji University, Nakano, Tokyo, 164-8525, Japan. Email: yuko\_ozawa@meiji.ac.jp}, 
Ayaka Shimizu\thanks{Institute for Global Leadership, Ochanomizu University, Ohtsuka, Tokyo, 112-8610, Japan. Email: shimizu.ayaka@ocha.ac.jp} and 
Yoshiro Yaguchi\thanks{Maebashi Institute of Technology, Maebashi, Gunma, 371-0816, Japan. Email: y.yaguchi@maebashi-it.ac.jp}}
\date{\today}
\begin{document}

\maketitle

\begin{abstract}
The CN matrix of an $n$-braid projection $B$ is an $n \times n$ matrix such that each $(i,j)$ entry indicates the number of crossings between $i^{th}$ and $j^{th}$ strands of $B$. 
In this paper, several patterns of an $n \times n$ matrix to be a CN matrix are discussed, and the CN matrix of a pure 6-braid projection is characterized. 
As an application, the OU matrix of a pure 6-braid diagram and the crossing matrix of a positive pure 6-braid are also characterized. 
\end{abstract}

\section{Introduction}
\label{section-intro}

An {\it $n$-braid} ($n\in \mathbb{N}$) is a disjoint union of $n$ strands with two horizontal bars in $\mathbb{R}^3$ such that each strand has the initial point on the upper bar and the endpoint on the lower bar, and runs from the upper bar to the lower bar without returning. 
Any braid is represented by a {\it braid diagram} on $\mathbb{R}^2$ as shown in Figure \ref{fig-m-ex}. 
\begin{figure}[ht]
\centering
\includegraphics[width=8cm]{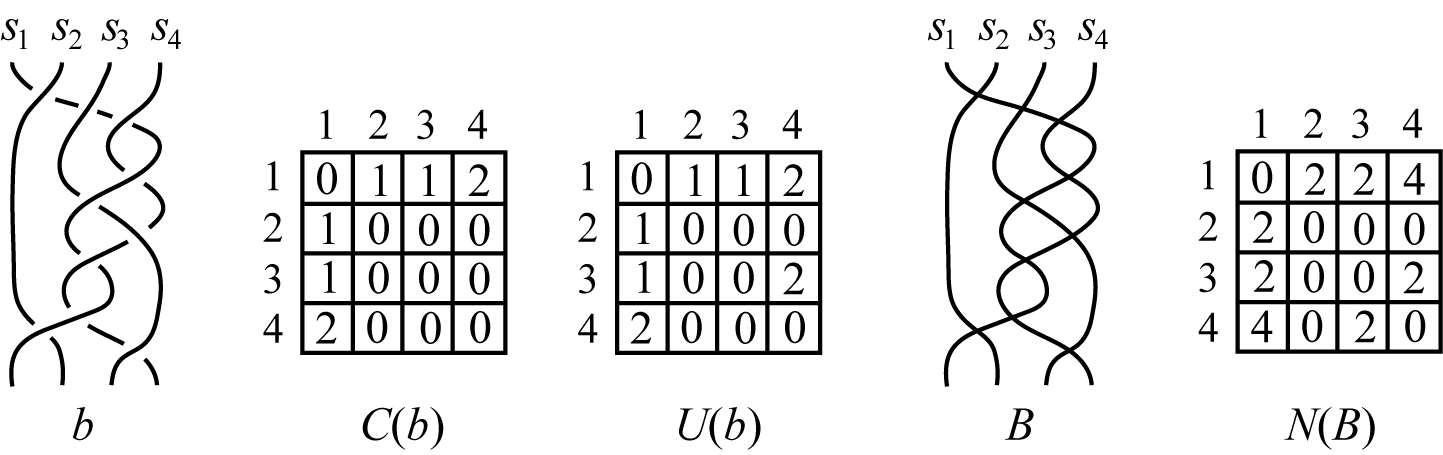}
\caption{The crossing matrix $C(b)$ and the OU matrix $U(b)$ of a 4-braid diagram $b$ and the CN matrix $N(B)$ of a 4-braid projection $B$. }
\label{fig-m-ex}
\end{figure}
Each crossing of a braid diagram $b$ has the sign as indicated in Figure \ref{fig-sign}\footnote{Our definition of the sign of a crossing of a braid diagram is opposite to that in \cite{Bu} and same to \cite{Gu, AY-5}. }. 
A braid diagram $b$ is said to be {\it positive} if all the crossings of $b$ are positive. 
A {\it positive braid} is a braid that has a positive diagram. 
Let $b$ be an $n$-braid diagram with strands $s_1, s_2, \dots , s_n$, where the indices of strands are given from left to right at the top of the braid diagram as shown in Figure \ref{fig-m-ex}. 
\begin{figure}[ht]
\centering
\includegraphics[width=2cm]{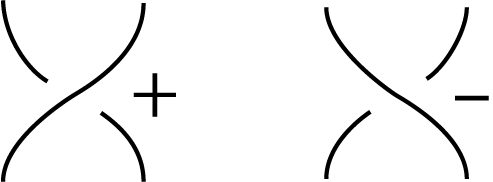}
\caption{A positive crossing on the left and a negative crossing on the right. }
\label{fig-sign}
\end{figure}
We say that a matrix $M$ is a {\it non-negative integer (resp. even) matrix} if every entry of $M$ is a non-negative integer (resp. even integer). 
The {\it crossing matrix} of a braid diagram was defined by Burillo, Gutierrez, Krsti\'{c} and Nitecki in \cite{Bu} as follows. 

\begin{definition}[\cite{Bu}]
The {\it crossing matrix} of a braid diagram $b$, denoted by $C(b)$, is an $n \times n$ zero-diagonal non-negative integer matrix such that the $(i,j)$ entry is the number of positive crossings minus the number of negative crossings between the strands $s_i$ and $s_j$ where $s_i$ is over $s_j$. (See Figure \ref{fig-m-ex}.) 
\end{definition}

\noindent (See also \cite{Gu}.) 
It was shown in \cite{Bu} that the crossing matrix does not depend on the choice of the diagram of a braid, namely, the crossing matrix is an invariant of the topological type of braids. 
We say that a braid or a braid diagram is {\it pure} if the endpoint of each strand $s_i$ is also at the $i^{th}$ position at the bottom. 
It is natural to study pure braids since the permutation of a pure braid is the identity, and they play an essential role in the study of the normal forms of braids (see, for example, \cite{Artin, El, th}). 
In \cite{Bu}, the following proposition was shown. 

\begin{proposition}[\cite{Bu}]
An $n \times n$ matrix $M$ is the crossing matrix of some pure $n$-braid diagram if and only if $M$ is a zero-diagonal symmetric integer matrix. 
\label{prop-T0-set}
\end{proposition}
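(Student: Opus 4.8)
The plan is to lean on the already-recalled invariance of the crossing matrix under braid isotopy, together with Artin's generators of the pure braid group; this reduces the whole statement to a single model computation.

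\emph{Necessity.} Suppose $M = C(b)$ for a pure $n$-braid diagram $b$. Integrality is immediate, and the diagonal vanishes because each strand of a braid diagram is monotone in the vertical direction and hence cannot cross itself, so only symmetry has content. I would obtain it algebraically. First, for \emph{pure} diagrams $b$ and $b'$ one has $C(bb') = C(b) + C(b')$: purity guarantees that strand $s_i$ of $bb'$ is strand $s_i$ of $b$ followed by strand $s_i$ of $b'$, so crossings between any two strands, with their signs and over/under data, simply add; in particular $C$ of the trivial braid is $0$, which forces $C(A_{ij}^{-1}) = -C(A_{ij})$. Second, $P_n$ is generated by the standard elements $A_{ij}$ ($1 \le i < j \le n$), represented by the diagram $(\sigma_{j-1}\cdots\sigma_{i+1})\,\sigma_i^{2}\,(\sigma_{i+1}^{-1}\cdots\sigma_{j-1}^{-1})$. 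Tracing the strands through this diagram shows that the two crossings of the $\sigma_i^{2}$ clasp contribute $+1$ to each of the $(i,j)$ and $(j,i)$ entries, while for every intermediate index $k$ with $i<k<j$ the one crossing produced by $\sigma_{j-1}\cdots\sigma_{i+1}$ is matched, in the signed count, by the mirror crossing produced by $\sigma_{i+1}^{-1}\cdots\sigma_{j-1}^{-1}$ (same over/under ordered pair, opposite sign). Hence $C(A_{ij}) = E_{ij} + E_{ji}$, where $E_{ij}$ denotes a matrix unit. Now invoke invariance: $b$ may be replaced by a diagram $\prod_k A_{i_k j_k}^{\varepsilon_k}$ of the same braid, and additivity gives $C(b) = \sum_k \varepsilon_k\,(E_{i_k j_k} + E_{j_k i_k})$, which is symmetric, integral, and zero-diagonal.

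\emph{Sufficiency.} Conversely, given a zero-diagonal symmetric integer matrix $M = (m_{ij})$, put $\beta = \prod_{1 \le i < j \le n} A_{ij}^{\,m_{ij}} \in P_n$ and realize it by the diagram $b$ assembled from the standard pictures of the $A_{ij}^{\pm 1}$. Additivity together with $C(A_{ij}) = E_{ij}+E_{ji}$ gives $C(b) = \sum_{i<j} m_{ij}(E_{ij}+E_{ji}) = M$, the last equality precisely because $M$ is symmetric with vanishing diagonal.

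The only step needing genuine care is the model computation $C(A_{ij}) = E_{ij}+E_{ji}$: one must verify that in the standard diagram of $A_{ij}$ each crossing involving a strand $s_k$ with $i<k<j$ is matched by exactly one other such crossing with the same over/under ordered pair but the opposite sign, so that it contributes nothing to the signed count, and that the surviving contribution from the $\sigma_i^{2}$ clasp is genuinely symmetric in $i$ and $j$. Everything else is routine bookkeeping.
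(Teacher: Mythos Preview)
The paper does not supply its own proof of this proposition; it is simply quoted from \cite{Bu}. So there is nothing in the present paper to compare your argument against.

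That said, your argument is correct and is essentially the standard one. The model computation $C(A_{ij})=E_{ij}+E_{ji}$ checks out: in the conjugating words $\sigma_{j-1}\cdots\sigma_{i+1}$ and $\sigma_{i+1}^{-1}\cdots\sigma_{j-1}^{-1}$, each intermediate strand $s_k$ meets $s_j$ twice with $s_k$ over $s_j$ both times but with opposite signs, so these contributions vanish in $C$; and in the $\sigma_i^{2}$ clasp the two crossings have $s_i$ over $s_j$ once and $s_j$ over $s_i$ once, both positive, giving exactly $E_{ij}+E_{ji}$. One small point of presentation: your deduction of $C(A_{ij}^{-1})=-C(A_{ij})$ from ``$C$ of the trivial braid is $0$'' tacitly uses invariance (the diagram $A_{ij}A_{ij}^{-1}$ is not the trivial \emph{diagram}, only the trivial \emph{braid}); since you invoke invariance anyway this is fine, but it would read more cleanly to observe directly that reversing all crossing signs in a diagram negates $C$.
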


\noindent We denote the $(i,j)$ entry of a matrix $M$ by $M(i,j)$. 

\begin{definition}[\cite{Bu}]
A zero-diagonal matrix $M$ is said to be T0 if whenever $1 \leq i<j<k \leq n$, then $M(i,j)=M(j,k)=0$ implies $M(i,k)=0$. 
\end{definition}

\noindent In this paper, a ``T0 matrix'' implies a ``T0 zero-diagonal matrix''. 
In \cite{Bu}, the following conjecture was proposed for positive pure braids. 

\begin{conjecture}[\cite{Bu}]
An $n \times n$ integer matrix $M$ is the crossing matrix of some positive pure $n$-braid diagram if and only if $M$ is a non-negative integer T0 symmetric matrix. 
\label{conj-C}
\end{conjecture}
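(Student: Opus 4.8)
The plan is to prove the two directions separately, and to handle the hard direction by reducing it, via Proposition \ref{prop-T0-set}, to a statement about CN matrices of pure braid projections.

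For the necessity direction, suppose $M=C(b)$ for a positive pure $n$-braid diagram $b$. Then $M$ is a non-negative integer matrix because every crossing of $b$ is positive, and it is zero-diagonal by definition, while its symmetry is exactly Proposition \ref{prop-T0-set} applied to the pure diagram $b$. To see that $M$ is T0, I would use the left-to-right ordering of strands: at each height of $b$ the strands appear in some order, and two fixed strands change their relative position precisely at a crossing between them. If $M(i,j)=0$ then, since $b$ has no negative crossings and $C(b)(i,j)=C(b)(j,i)$ by symmetry, the strands $s_i$ and $s_j$ never cross in $b$; hence $s_i$ stays on one side of $s_j$, and because $b$ is pure with $i<j$ it lies to the left of $s_j$ throughout. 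Likewise $M(j,k)=0$ puts $s_j$ to the left of $s_k$ throughout, so $s_i$ stays to the left of $s_k$, they never cross, and $M(i,k)=0$.

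For the sufficiency direction the first step is the reduction: it suffices to realize $2M$ as the CN matrix of a pure $n$-braid projection. Indeed, given a pure $n$-braid projection $B$ with $N(B)=2M$, let $b$ be the diagram obtained from $B$ by declaring every crossing to be positive. Then $b$ is a positive pure $n$-braid diagram, and $C(b)(i,j)+C(b)(j,i)$ equals the number of $s_i$-$s_j$ crossings of $b$, i.e. $N(B)(i,j)=2M(i,j)$; since $C(b)$ is symmetric by Proposition \ref{prop-T0-set}, we get $C(b)(i,j)=M(i,j)$, so $C(b)=M$. As $M$ is non-negative, integral, symmetric and T0, the matrix $2M$ is non-negative, even, symmetric and T0 (the T0 condition is invariant under multiplication by a positive scalar). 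Thus Conjecture \ref{conj-C} is equivalent to the assertion that every non-negative even symmetric T0 matrix is the CN matrix of some pure $n$-braid projection, which is precisely the kind of statement the CN-matrix characterization is designed to provide.

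To establish that assertion I would argue by induction, decomposing $M$ (equivalently $2M$) into explicit elementary pieces. First note that T0 forces the support graph of $M$ to be a disjoint union of intervals of consecutive indices, so $M$ is block-diagonal; realizing each block by a braid on the corresponding block of consecutive strands and placing those braids side by side reduces the problem to the case of connected support. In that case I would pick a pair $\{i,j\}$ with $M(i,j)>0$ and $j-i$ maximal, route each of the strands $s_{i+1},\dots,s_{j-1}$ out from between $s_i$ and $s_j$ by letting it pass either $s_i$ or $s_j$, perform a suitable number of full twists between $s_i$ and $s_j$, and then route everything back; the T0 condition is exactly what guarantees that every such passing (and every forced crossing between two of the rerouted strands) consumes a crossing count that is actually present in $M$. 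Subtracting from $M$ the crossing matrix of this explicit positive pure fragment decreases the total crossing number, and one hopes to continue by induction.

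The main obstacle, and the reason the paper works only with small $n$, is that a greedy choice of elementary fragment need not leave a T0 matrix behind: a zero entry created by the subtraction can destroy the T0 condition, as already happens for $n=4$ (for instance the T0 matrix with first row $(0,1,2,1)$ and zeros elsewhere). So the decomposition has to be organized with care, using nested rather than disjoint excursions and a fixed catalogue of model matrices against which the local pattern of $M$ around the chosen pair is matched, with a verification at each step that the residual matrix remains non-negative, even, symmetric and T0. Carrying out this bookkeeping — enumerating the relevant patterns and checking the residuals — is where essentially all the difficulty lies, and for $n\le 6$ it is feasible to complete the enumeration, which is the content of the CN-matrix characterization.
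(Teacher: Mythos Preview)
Your reduction of Conjecture~\ref{conj-C} to Conjecture~\ref{conj-CN} --- necessity via $C(b)=U(b)$ for positive~$b$ together with T0 of the underlying CN matrix, sufficiency by realizing~$2M$ as a CN matrix, making every crossing of the projection positive, and using symmetry of~$C$ for pure braids to halve --- is exactly the content of the paper's Proposition~\ref{prop-3to1}, so that half of your proposal is correct and essentially identical to the paper. Where you diverge is in the attack on Conjecture~\ref{conj-CN} itself: you propose an induction by subtracting elementary positive pure fragments, whereas the paper instead reduces to $(0,2)$-matrices (Proposition~\ref{prop-M02}), develops the BW-ladder calculus of Section~\ref{section-BW} to certify CN-realizability of a library of ``formations'' (snake, hang-glider, loupe; Section~\ref{section-formation}), assembles from these a catalogue of CN-realizable configurations (Section~\ref{section-conf}), and then verifies by computer that each of the 4{,}824 upper-triangular T0 $(0,2)$-matrices of size~$6$ matches some configuration or its reverse. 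The paper's additive/catalogue route sidesteps the very T0-preservation obstacle you correctly flag, at the price of being less conceptual and dependent on exhaustive search. Neither approach settles the conjecture for general~$n$, and you are honest about that; but note that your inductive sketch is not yet a proof even for~$n=6$, while the paper's enumeration is complete there.
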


\noindent Conjecture \ref{conj-C} was shown to be true for $n \leq 3$ by Burillo, Gutierrez, Krsti\'{c} and Nitecki in \cite{Bu} and for $n \leq 5$ by the second and third authors in \cite{AY-5}. 
The {\it OU matrix} was introduced by the second and third authors in \cite{AY-d} to describe the ``warping degree'' of a braid diagram, which estimates some knot invariants, including the unknotting number, of the closure, which is a knot or link diagram (\cite{ASA}). 

\begin{definition}[\cite{AY-d}]
The {\it OU matrix} of an $n$-braid diagram $b$, denoted by $U(b)$, is an $n \times n$ non-negative integer zero-diagonal matrix such that the $(i,j)$ entry is the number of crossings between $s_i$ and $s_j$ where $s_i$ is over $s_j$. (See Figure \ref{fig-m-ex}.) 
\end{definition}

\begin{proposition}[\cite{AY-5}]
If $b$ is a positive diagram, then $U(b)=C(b)$. 
\label{prop-C-OU}
\end{proposition}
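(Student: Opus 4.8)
The plan is to compare the two definitions entrywise; no machinery is needed. Fix indices $i$ and $j$ with $i \neq j$. By definition, $C(b)(i,j)$ is the number of positive crossings between $s_i$ and $s_j$ at which $s_i$ lies over $s_j$, minus the number of negative such crossings. Since $b$ is a positive diagram, every crossing of $b$ is positive; in particular there are no negative crossings between $s_i$ and $s_j$ with $s_i$ over $s_j$, so the subtracted quantity is $0$. Consequently $C(b)(i,j)$ equals the total number of crossings between $s_i$ and $s_j$ at which $s_i$ is over $s_j$, which is precisely $U(b)(i,j)$ by definition. For $i=j$, both $C(b)(i,i)$ and $U(b)(i,i)$ vanish because both matrices are required to be zero-diagonal. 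Hence $U(b)=C(b)$.

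One can phrase the argument slightly more symmetrically: for an arbitrary diagram $b$, write $p_{ij}$ (resp.\ $n_{ij}$) for the number of positive (resp.\ negative) crossings between $s_i$ and $s_j$ at which $s_i$ is over $s_j$; then $U(b)(i,j) = p_{ij} + n_{ij}$ and $C(b)(i,j) = p_{ij} - n_{ij}$, and positivity of $b$ gives $n_{ij} = 0$ for all $i,j$, from which the claim is immediate. I expect no genuine obstacle here — the only point that deserves attention is that $U(b)$ records an unsigned crossing count while $C(b)$ records a signed difference of counts, and these coincide exactly because positivity forces every crossing sign to be $+1$, annihilating the negative term.
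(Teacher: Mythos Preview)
Your argument is correct and is exactly the natural entrywise comparison from the definitions. Note that the paper itself does not supply a proof of this proposition; it is simply quoted from \cite{AY-5}, so there is no in-paper proof to compare against, but your reasoning is the expected one and would be standard.
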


\noindent Let $M^T$ denote the transpose of a matrix $M$. 
The OU matrix of a pure $n$-braid diagram was characterized for $n \leq 5$ in \cite{AY-5}, and based on the result, we propose the following conjecture. 

\begin{conjecture}
An $n \times n$ non-negative integer matrix $M$ is the OU matrix of some pure $n$-braid diagram if and only if $M+M^T$ is an even T0 matrix. 
\label{conj-OU}
\end{conjecture}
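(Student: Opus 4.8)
\noindent\emph{Proof proposal.} Write $S := M + M^{T}$; this is a symmetric zero-diagonal matrix. The plan is to prove the two implications of Conjecture \ref{conj-OU} separately: the ``only if'' direction directly and in full generality, and the ``if'' direction by reducing it to a statement about braid \emph{projections}, which is precisely what this paper establishes (for $n\le 6$). For ``only if'', suppose $M=U(b)$ for a pure $n$-braid diagram $b$ with underlying projection $B$. For $i<j$ the entry $U(b)(i,j)$ counts the crossings between $s_i$ and $s_j$ at which $s_i$ is over and $U(b)(j,i)$ counts those at which $s_j$ is over, so $S(i,j)$ is the total number of crossings between $s_i$ and $s_j$; that is, $S=N(B)$. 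Erasing all strands except $s_i$ and $s_j$ from $b$ leaves a pure $2$-braid diagram, and two braid strands that begin and end in the same horizontal order cross an even number of times, so $S$ is an even matrix. For the T0 property, let $1\le i<j<k\le n$ with $S(i,j)=S(j,k)=0$: since braid strands are monotone in the vertical direction, two strands that never cross keep their horizontal order all the way down, so $s_i$ stays left of $s_j$ and $s_j$ stays left of $s_k$, hence $s_i$ stays left of $s_k$ and $S(i,k)=0$. Thus $S$ is an even T0 matrix.

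For the ``if'' direction, assume $S=M+M^{T}$ is an even T0 matrix. I claim it suffices to realise $S$ as the CN matrix of some pure $n$-braid projection. Indeed, given a pure $n$-braid projection $B$ with $N(B)=S$, any assignment of over/under information at the double points of $B$ yields a pure $n$-braid diagram (monotonicity of the strands is unaffected), and among the $S(i,j)$ crossings between $s_i$ and $s_j$ we may freely declare $s_i$ to be over at exactly $M(i,j)$ of them and $s_j$ over at the remaining $M(j,i)=S(i,j)-M(i,j)$; the resulting diagram $b$ has $U(b)=M$. So Conjecture \ref{conj-OU} is equivalent to the characterisation of CN matrices of pure $n$-braid projections, which is the main subject of this paper; combined with that characterisation for $n\le 6$, this proves Conjecture \ref{conj-OU} for $n\le 6$.

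To realise an even T0 matrix $S$ as a CN matrix, I would first exploit the T0 condition to simplify the target. Let $G$ be the graph on $\{1,\dots,n\}$ with an edge $\{i,j\}$ exactly when $S(i,j)\neq 0$. A short argument from T0 shows that every connected component of $G$ is an interval of consecutive integers, and that if $G$ has any edge then it has one between two consecutive integers: take an edge $\{i,j\}$ with $j-i$ minimal, and if $j-i\ge 2$ apply T0 to $(i,i+1,j)$ to force $\{i,i+1\}$ or $\{i+1,j\}$ to be an edge, contradicting minimality. Drawing the projections of the distinct components in disjoint vertical strips reduces the problem to the case that $G$ is connected. In that case I would build the projection as a composition of two kinds of elementary pure moves: a ``wrap'' in which one strand loops once around another strand currently lying to its side — this adds $2$ to the CN entry of that pair and of each pair consisting of one of the two strands and a strand currently between them, and $0$ to all other entries — and single half-twists $\sigma_p$ used only to move strands out of the way; the T0 relations on triples $(p,q,r)$ are exactly what is needed to schedule these moves so that each pair $\{i,j\}$ accumulates precisely $S(i,j)$ crossings.

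The main obstacle is this scheduling step: proving that an even T0 matrix can always be built up in this way without over- or under-shooting some entry. The naive induction — choose a consecutive pair $\{i,i+1\}$ with $S(i,i+1)>0$, decrease that entry by $2$, and recurse — fails because decreasing $S(i,i+1)$ to $0$ can destroy the T0 property (for instance the matrix with $S(1,2)=2$, $S(2,3)=0$, $S(1,3)=4$ is T0 but ceases to be after the move). One must instead interleave wraps with parking half-twists and, at each stage, pick a pair whose reduction is ``safe'' for T0; this is where a genuine case analysis enters. Carrying it out uniformly in $n$ is the content of Conjecture \ref{conj-OU}, whereas for $n\le 6$ the analysis is finite and can be completed by hand, which is what the characterisation of the CN matrix of a pure $6$-braid projection in this paper accomplishes.
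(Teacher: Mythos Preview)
Your reduction of Conjecture~\ref{conj-OU} to Conjecture~\ref{conj-CN} is correct and is exactly the paper's route: the paper packages this as Proposition~\ref{prop-3to2}, whose proof observes that $M+M^T=N(B)$ for the underlying projection $B$ (giving evenness and T0 via Propositions~\ref{prop-pure-even} and~\ref{prop-T0}), and then, assuming $M+M^T$ is CN-realizable by a pure projection, invokes ``Corollary~1 in \cite{AY-5}'' for the over/under assignment. You spell that last step out explicitly---at the $S(i,j)$ crossings between $s_i$ and $s_j$, declare $s_i$ over at exactly $M(i,j)$ of them---which is precisely that corollary, so at the level of the reduction the two arguments coincide.

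Where you diverge is in the last two paragraphs, your sketch of how to realise an even T0 matrix $S$ as a CN matrix. The paper does not attempt a uniform scheduling argument of the kind you outline (interval components, wraps, parking half-twists). Instead it develops the BW-ladder calculus and catalogues explicit CN-realizable ``formations'' (snake, hang-glider, loupe) and configurations, and for $n=6$ finishes by a computer check that every T0 $(0,2)$-matrix carries one of these configurations (Proposition~\ref{prop-02-CN6}), then lifts to arbitrary even entries via Proposition~\ref{prop-M02}. Your interval-decomposition observation is correct and pleasant, but your own diagnosis is accurate: the ``scheduling'' step is the genuine obstacle, the naive greedy reduction can destroy T0, and nothing in your sketch gets past this. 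So your proposal proves exactly what the paper proves---Conjecture~\ref{conj-OU} conditional on Conjecture~\ref{conj-CN}, hence unconditionally for $n\le 6$---by the same reduction, while gesturing at an alternative attack on Conjecture~\ref{conj-CN} that remains incomplete.
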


\noindent A {\it braid projection} is a braid diagram without over/under crossing information as illustrated in Figure \ref{fig-m-ex}. 
To address Conjectures \ref{conj-C} and \ref{conj-OU}, we explore the {\it CN matrix} which was introduced in \cite{AY-5}. 

\begin{definition}[\cite{AY-5}]
The {\it CN matrix} of an $n$-braid projection $B$, denoted by $N(B)$, is an $n \times n$ zero-diagonal matrix such that each $(i,j)$ entry is the number of crossings between $s_i$ and $s_j$. (See Figure \ref{fig-m-ex}.) 
\end{definition}

\noindent By definition, $N(B)$ is symmetric and $N(B)=U(b)+(U(b))^T$ holds for any braid diagram $b$ that has the projection $B$ (see Figure \ref{fig-m-ex}). 
We propose the following conjecture. 

\begin{conjecture}
An $n \times n$ non-negative integer matrix $M$ is the CN matrix of some pure $n$-braid projection if and only if $M$ is an even T0 symmetric matrix. 
\label{conj-CN}
\end{conjecture}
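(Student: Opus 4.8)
The plan is to prove both implications; the forward direction is routine, so I sketch it first. Suppose $M=N(B)$ for a pure $n$-braid projection $B$ with strands $s_1,\dots,s_n$. Zero-diagonality and symmetry are immediate. For evenness, fix $i<j$ and forget every strand but $s_i$ and $s_j$: the two remaining strands begin and, by purity, end in the same left-to-right order, and since they exchange sides at each of their mutual crossings, $M(i,j)$ is even. For the T0 property, if $M(i,j)=0$ then $s_i$ and $s_j$ never cross, so their left-to-right order is constant; as $i<j$, $s_i$ stays left of $s_j$. Hence if $i<j<k$ with $M(i,j)=M(j,k)=0$, then at every height $s_i$ is left of $s_j$, which is left of $s_k$, so $s_i$ and $s_k$ never cross and $M(i,k)=0$.

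\emph{Sufficiency: reductions.} Let $M$ be even, T0, symmetric and non-negative; I would induct on $n$. First, reduce to the case where every entry of $M$ lies in $\{0,2\}$: if $M(i,j)\ge 4$, then decreasing the $(i,j)$ and $(j,i)$ entries by $2$ preserves the zero pattern (hence T0-ness), evenness, symmetry and non-negativity while lowering the entry sum, and in any projection realizing the smaller matrix the strands $s_i,s_j$ are adjacent at some height, where inserting two consecutive crossings restores the entry and changes no other. So assume $M=2A$, with $A$ the adjacency matrix of a graph $G$ on $\{1,\dots,n\}$ having the property that $i<j<k$ and $ij,jk\notin E(G)$ force $ik\notin E(G)$; in this shape the sufficiency amounts, via Proposition~\ref{prop-C-OU}, to that of Conjecture~\ref{conj-C}. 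Next, each connected component of $G$ is an interval: if $C$ is a component with $\min C=a$, $\max C=b$ and some $a<\ell<b$ lay outside $C$, connectedness of $C$ would supply an edge $xy$ with $x,y\in C$ and $x<\ell<y$, whence T0 applied to $xy$ would force $\ell$ adjacent to $x$ or $y$, i.e.\ $\ell\in C$. Realizing each interval-component by a projection confined to its own block of columns and placing the blocks side by side (no crossings occur between different components) reduces us to the case that $G$ is connected and $n\ge 2$.

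\emph{Sufficiency: the connected case.} With $G$ connected on $\{1,\dots,n\}$, $n\ge 2$, set $p=\max\{j:\{1,j\}\in E(G)\}$. For every ``blocking'' index $j$ with $1<j<p$ and $\{1,j\}\notin E(G)$, T0 applied to the edge $\{1,p\}$ forces $\{j,p\}\in E(G)$, and one checks that every index $\ell$ with $j<\ell<p$ is then adjacent to $j$ or to $p$. The construction: slide each blocking strand $s_j$, largest index first, to the position just right of $s_p$ — every crossing so created joins a pair that is an edge of $G$, hence is one of the exactly two crossings that pair needs; then slide $s_1$ rightward until it sits just right of $s_p$ and then back, which makes $s_1$ cross each of its neighbours exactly twice and touch no other strand; then reverse the blocking slides, providing the second crossing of each affected pair. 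Afterwards $s_1$ is no longer involved in any crossing and meets each other strand the prescribed number of times, and it remains to realize the crossings among $s_2,\dots,s_n$.

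\emph{The main obstacle.} This last step is the crux, and the reason a uniform argument is elusive — so that below the characterization is obtained only up to $n=6$. It is \emph{not} merely the inductive hypothesis applied to $G$ on $\{2,\dots,n\}$: some of those crossings have already been produced, so the residual matrix is $M$ restricted to $\{2,\dots,n\}$ with certain entries zeroed, and this can fail to be T0 already for $n=4$ — for instance with $G$ having edges $\{1,3\},\{2,3\},\{2,4\}$ one is left needing two crossings between $s_2$ and $s_4$ with $s_3$ in between, which is unrealizable in isolation. What is really needed is a strengthened statement realizing a prescribed symmetric matrix starting from an arbitrary permutation of the strands, with the blocking slides and the sweep of $s_1$ interleaved with the recursion, plus a careful count guaranteeing that every crossing occurs with exactly its prescribed multiplicity and none is spurious. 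For $n\le 6$ one can instead enumerate the finitely many connected T0 graphs on at most six vertices and give a realizing projection case by case.
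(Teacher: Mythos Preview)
The statement you are attempting is Conjecture~\ref{conj-CN}, which the paper leaves \emph{open}: only the case $n\le 6$ is established (Theorem~\ref{thm-true6}, via Proposition~\ref{prop-CN6}). So there is no general proof in the paper to compare against, and your write-up does not supply one either.

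Your necessity argument is correct and matches what the paper cites from \cite{AY-5} (Propositions~\ref{prop-pure-even} and~\ref{prop-T0}). Your reductions toward sufficiency---passing to $(0,2)$-matrices and then to a single connected component, which you correctly show must occupy an interval of indices---are also sound and parallel Propositions~\ref{prop-M02} and~\ref{prop-small} in the paper. The remark that the $(0,2)$ case is equivalent, via Proposition~\ref{prop-C-OU}, to Conjecture~\ref{conj-C} is true but does not advance matters, since that conjecture is equally open.

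Where the proposal falls short is exactly where you say it does: after sweeping $s_1$ out and back, the residual demand on $\{2,\dots,n\}$ need not be T0, so the induction does not close (your $n=4$ example with edges $\{1,3\},\{2,3\},\{2,4\}$ is indeed CN-realizable, yet your construction leaves the non-T0 residual $\{2,4\}$). You diagnose this honestly, but then the final sentence---``for $n\le 6$ enumerate and realize case by case''---is an assertion, not a proof. The paper's verification for $n=6$ is far from a bare enumeration: there are $4{,}824$ T0 $(0,2)$-matrices of size $6\times 6$, and the argument builds an arsenal of CN-realizable \emph{formations} valid for general $n$ (the snake, hang-glider and loupe families of Section~\ref{section-formation}, handled via the BW-ladder moves of Section~\ref{section-BW}), derives from them over a hundred explicit configurations (Figures~\ref{fig-table1}--\ref{fig-d2}), and checks by computer that every T0 matrix matches one of these or its reverse. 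None of that machinery appears in your proposal. In sum, you have the necessity half and the standard reductions, but for sufficiency you have neither a working induction nor the case analysis the paper actually carries out.
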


\noindent We show several patterns of an $n\times n$ matrix to be a CN matrix in Section \ref{section-formation}, and prove the following theorem in Section \ref{section-66}. 

\begin{theorem}
Conjectures \ref{conj-C}, \ref{conj-OU}, \ref{conj-CN} are true when $n \leq 6$. 
\label{thm-true6}
\end{theorem}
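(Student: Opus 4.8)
The plan is to reduce Theorem~\ref{thm-true6}, for a fixed $n$, to the single statement that \emph{every even T0 symmetric non-negative integer $n\times n$ matrix is the CN matrix of some pure $n$-braid projection} (the sufficiency part of Conjecture~\ref{conj-CN}); the necessity parts are elementary and the cases $n\le 5$ of all three conjectures are already contained in \cite{Bu,AY-5}. For necessity, note that for a pure $n$-braid projection $B$ the matrix $N(B)$ is zero-diagonal and symmetric by definition, is even because two strands of a pure braid return to their original relative order and hence cross an even number of times, and is T0 because, reading a diagram from top to bottom, two non-crossing strands keep a constant left--right order, so $N(B)(i,j)=N(B)(j,k)=0$ with $i<j<k$ forces $s_i$ left of $s_j$ left of $s_k$ throughout, whence $N(B)(i,k)=0$. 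For a pure diagram $b$ with projection $B$ one has $U(b)+U(b)^T=N(B)$, which gives the necessity in Conjecture~\ref{conj-OU}; for a positive pure diagram $b$, $C(b)=U(b)$ by Proposition~\ref{prop-C-OU} and $C(b)$ is symmetric by Proposition~\ref{prop-T0-set}, so $2C(b)=N(B)$ and $C(b)=\tfrac12 N(B)$ is a non-negative integer T0 symmetric matrix.

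For sufficiency I would first observe that, at a fixed $n$, the sufficiency of Conjecture~\ref{conj-CN} implies that of Conjectures~\ref{conj-OU} and~\ref{conj-C}. Indeed, given $M\ge 0$ with $M+M^T$ even and T0, realize $M+M^T$ as $N(B)$ for a pure $n$-braid projection $B$; since a diagram is a projection together with an arbitrary over/under choice at each crossing, split the $N(B)(i,j)=M(i,j)+M(j,i)$ crossings between $s_i$ and $s_j$ into $M(i,j)$ with $s_i$ over and $M(j,i)$ with $s_j$ over, obtaining a pure diagram $b$ with $U(b)=M$. For Conjecture~\ref{conj-C}, given $M\ge 0$ T0 symmetric, realize $2M$ as $N(B)$ and take the all-positive diagram $b$ on $B$: then $C(b)=U(b)$, $C(b)=C(b)^T$, and $2C(b)=N(B)=2M$, so $C(b)=M$. (Conversely, applying these realizations to $\tfrac12 M$ shows all three sufficiency statements are equivalent, so the case $n\le 5$ follows from \cite{AY-5} and only the $n=6$ case of Conjecture~\ref{conj-CN} remains.)

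For that remaining case I would argue by induction on $\sum_{i<j}M(i,j)$, feeding on the patterns of Section~\ref{section-formation}. Write $\mathcal N_n$ for the set of CN matrices of pure $n$-braid projections: it is a submonoid of the additive monoid of symmetric zero-diagonal non-negative integer matrices (vertical concatenation adds CN matrices), is preserved by the left--right mirror $M(a,b)\mapsto M(n+1-a,\,n+1-b)$, and absorbs $\mathcal N_m$ on any block of $m$ consecutive positions (insert straight strands). Two further ingredients are a structural fact — in a T0 zero-diagonal matrix every connected component of the support graph is a block of consecutive integers (a path within a component straddling an outside index would, by T0, attach that index to the component) — and the explicit members of $\mathcal N_6$ furnished by Section~\ref{section-formation}, namely the full twists $2E_{i,i+1}$ (from $\sigma_i^{2}$), the ``brooms'' obtained when one strand wraps a block of consecutive strands (the projections of the generators $A_{i,j}$ and their mirrors), and certain ``interleaved'' patterns. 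Then, for an even T0 symmetric $6\times 6$ matrix $M$: if its support graph is disconnected, $M$ is block-diagonal along the interval components, each block is even T0 symmetric on fewer than $6$ strands, so realize each block on its block of positions and concatenate; if some adjacent entry has $M(i,i+1)\ge 4$, or $M(i,i+1)=2$ and deleting it creates no T0 violation (a local condition on rows and columns $i,i+1$), peel off $2E_{i,i+1}$ and recurse; otherwise one must exhibit, for each remaining $M$ up to the mirror, a pattern $P\in\mathcal N_6$ with $0<P\le M$ entrywise and $M-P$ again even, symmetric and T0, so that induction applies.

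The main obstacle is precisely this last step. The ``irreducible'' matrices — connected support on all six strands, every adjacent entry $0$ or $2$, none peelable — form a finite but sizeable family, and the full twists and brooms alone do not handle them: already the $5$-strand matrix with crossings $2$ exactly between the pairs $\{1,2\},\{1,3\},\{1,5\},\{3,5\},\{4,5\}$ is not a sum of such, yet is realizable by an interleaved braid word. I expect the bulk of Section~\ref{section-66} to be the organization of this classification — by the support graph and by the positions of the $2$'s among adjacent entries — and the choice, in each case, of the right member of the pattern list of Section~\ref{section-formation}; and $n=6$ should be the natural limit of the method because it is the first size at which these interleaved patterns become unavoidable.
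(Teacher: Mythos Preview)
Your reduction of Conjectures~\ref{conj-C} and~\ref{conj-OU} to Conjecture~\ref{conj-CN} is correct and is exactly what the paper does in Propositions~\ref{prop-3to2} and~\ref{prop-3to1}; the necessity arguments are fine too. So the only issue is how you propose to establish the sufficiency of Conjecture~\ref{conj-CN} at $n=6$.

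Here your plan diverges from the paper and carries a real gap. You want to induct on $\sum_{i<j}M(i,j)$, peeling off a realizable pattern $P\le M$ with $M-P$ still T0. The paper does not do this. Instead it invokes Proposition~\ref{prop-M02}: if the $(0,2)$-matrix $M^{02}$ obtained by replacing every nonzero entry of $M$ with $2$ is CN-realizable, then so is $M$. This single step collapses the infinite problem to the \emph{finite} set of $6\times 6$ strictly upper triangular T0 $(0,2)$-matrices, of which there are exactly $4{,}824$. The paper then verifies by computer (Proposition~\ref{prop-02-CN6}) that each of these $4{,}824$ matrices matches one of the CN-realizable configurations assembled in Section~\ref{section-conf} (Examples~\ref{ex-2}, \ref{ex-1} and Propositions~\ref{prop-CN-form}, \ref{prop-CN-ladder}, together with their reverses). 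There is no peeling and no induction on entry sums; the role of Section~\ref{section-formation} is to manufacture enough configurations to cover the finite list, not to supply subtractable summands.

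Your approach is not obviously doomed, but the step you flag as ``the main obstacle'' is essentially the whole problem: once all entries are $0$ or $2$ and no $2E_{i,i+1}$ can be removed without breaking T0, you still have to realize $M$ outright, and you give no mechanism for doing so beyond hoping that the pattern list suffices case by case. In effect your base case \emph{is} the $(0,2)$-matrix problem the paper solves directly. Moreover, peeling has a structural hazard you only partly acknowledge: subtracting a realizable $P$ from a T0 matrix can destroy T0 (e.g.\ zeroing $M(i,j)$ and $M(j,k)$ while $M(i,k)$ stays positive), so an inductive scheme must certify at every step that a T0-preserving $P$ exists, which you have not done. The missing idea is therefore Proposition~\ref{prop-M02}; with it, the $n=6$ case becomes a finite check, and the paper's proof is a computer-assisted exhaustion rather than the hand classification you anticipate.
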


\noindent The structure of the paper is as follows. 
In Section \ref{section-prop-CN}, we review properties of the CN matrix. 
In Section \ref{section-BW}, we review the ``BW-ladder diagram'', a tool to test the possibility for a matrix to be a CN matrix. 
In Section \ref{section-formation}, we discuss several patterns of an $n \times n$ matrix to be a CN matrix. 
In Section \ref{section-conf}, we discuss the ``CN-realizable configurations'', a configuration of a matrix to realize a CN matrix. 
In Section \ref{section-66}, we discuss the relation of the crossing matrix, OU matrix and CN matrix, and prove Theorem \ref{thm-true6}.

\section{Properties of CN matrix}
\label{section-prop-CN}

In this section, we review the properties of the CN matrix. 
Let $B, C$ be $n$-braid projections. 
The {\it product of $B$ and $C$}, denoted by $BC$, is the $n$-braid projection that is obtained by connecting the $i^{th}$ endpoint of strand of $B$ and the $i^{th}$ initial point of $C$ for each $i=1, 2, \dots , n$. 

\begin{proposition}[\cite{AY-5}]
The addition $N(BC)=N(B)+N(C)$ holds when $B$ is a pure braid projection. 
\label{prop-pure-sum}
\end{proposition}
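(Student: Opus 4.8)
The plan is to argue directly from the definition of the product of braid projections and of the CN matrix. First I would observe that $BC$ is obtained by stacking $B$ on top of $C$ and gluing along the common horizontal bar, so every crossing of $BC$ is inherited either from the $B$-part or from the $C$-part, and no crossing is created or destroyed by the gluing. Consequently, for any two strands $t, t'$ of $BC$, the number of crossings between $t$ and $t'$ equals the number of crossings between their sub-arcs lying in the $B$-part plus the number of crossings between their sub-arcs lying in the $C$-part.

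Next I would identify the strands of $BC$ in terms of those of $B$ and $C$. Write $s_1^B, \dots, s_n^B$ for the strands of $B$ and $s_1^C, \dots, s_n^C$ for the strands of $C$, indexed from left to right at their tops as usual. Since $B$ is pure, the endpoint of $s_i^B$ lies at the $i^{th}$ position on the bottom bar of $B$, which is glued to the $i^{th}$ initial point of $C$, i.e.\ the top of $s_i^C$. Hence the $i^{th}$ strand of $BC$ is exactly the concatenation $s_i^B \cdot s_i^C$, so its sub-arc in the $B$-part is $s_i^B$ and its sub-arc in the $C$-part is $s_i^C$.

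Combining the two observations, the $(i,j)$ entry of $N(BC)$, which is the number of crossings between the $i^{th}$ and $j^{th}$ strands of $BC$, equals the number of crossings between $s_i^B$ and $s_j^B$ in $B$ plus the number of crossings between $s_i^C$ and $s_j^C$ in $C$, that is, $N(B)(i,j) + N(C)(i,j)$. The diagonal entries are $0$ on both sides, so $N(BC) = N(B) + N(C)$.

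The only point requiring care is the bookkeeping of which strand of $C$ each strand of $B$ feeds into, and this is precisely where the purity hypothesis enters: if $B$ were not pure with underlying permutation $\sigma$, the $i^{th}$ strand of $B$ would connect to the $\sigma(i)^{th}$ strand of $C$, giving instead $N(BC)(i,j) = N(B)(i,j) + N(C)(\sigma(i),\sigma(j))$. Thus purity of $B$ is exactly what makes the index sets match, and verifying this correspondence — modest as it is — is the crux of the argument.
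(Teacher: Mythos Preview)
Your proof is correct. Note, however, that this paper does not itself supply a proof of Proposition~\ref{prop-pure-sum}: the result is quoted from \cite{AY-5} and stated without argument. So there is no in-paper proof to compare against. Your direct crossing-count argument---observing that the gluing creates no crossings and that purity of $B$ forces the $i^{th}$ strand of $BC$ to be $s_i^B \cdot s_i^C$, whence $N(BC)(i,j)=N(B)(i,j)+N(C)(i,j)$---is the natural and standard one, and your closing remark about the permuted formula $N(BC)(i,j)=N(B)(i,j)+N(C)(\sigma(i),\sigma(j))$ in the non-pure case is exactly the general statement (this is how the additivity is used in \cite{AY-5} and implicitly here via Proposition~\ref{prop-02-sum}).
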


\begin{proposition}[\cite{AY-5}]
All the entries of the CN matrix $N(B)$ of a braid projection $B$ are even numbers if and only if $B$ is a pure braid projection. 
\label{prop-pure-even}
\end{proposition}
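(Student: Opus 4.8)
The plan is to reduce the statement to the elementary fact that the parity of the number of crossings between two strands records whether those strands reverse their relative left-to-right order from top to bottom. Let $\pi$ be the permutation associated with the braid projection $B$, so that the strand $s_i$ starting at the $i^{th}$ top position terminates at the $\pi(i)^{th}$ bottom position. By the definition of purity, $B$ is pure precisely when $\pi$ is the identity, and the diagonal entries of $N(B)$ are $0$ by definition; so it suffices to control the off-diagonal entries.

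First I would establish the following parity lemma: for each pair $i<j$, the entry $N(B)(i,j)$ is even if and only if $\pi(i)<\pi(j)$. To prove it, regard $B$ in general position and sweep a horizontal line $\ell$ from top to bottom. Away from the finitely many crossing heights the intersections $s_i\cap\ell$ and $s_j\cap\ell$ are distinct points, so their relative left-to-right order is well defined. This order can change only when $\ell$ passes a crossing between $s_i$ and $s_j$; crossings involving other strands leave the order of this particular pair unaffected, and at each crossing of the pair the two points exchange position. Since at the top $s_i$ lies to the left of $s_j$ (as $i<j$), after passing all $N(B)(i,j)$ crossings of the pair the strands preserve this initial order exactly when $N(B)(i,j)$ is even, which is precisely the condition $\pi(i)<\pi(j)$.

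With the lemma in hand both implications are immediate. If $B$ is pure then $\pi=\mathrm{id}$, so $\pi(i)=i<j=\pi(j)$ for every $i<j$, and the lemma forces every off-diagonal entry to be even; together with the zero diagonal this shows $N(B)$ is an even matrix. Conversely, if every entry of $N(B)$ is even then the lemma gives $\pi(i)<\pi(j)$ for all $i<j$, so $\pi$ is order-preserving; a permutation with no inversions is the identity, hence $\pi=\mathrm{id}$ and $B$ is pure.

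The only real content is the parity lemma, and the point requiring care is the book-keeping in the sweep argument: one must verify that, away from the crossing heights, the relative order of $s_i$ and $s_j$ is locally constant, that it is reversed exactly when $\ell$ meets a crossing of the pair $\{s_i,s_j\}$, and that it is insensitive to crossings of other strands. Granting the standard general-position form of a braid projection (crossings being transverse double points, which may be taken to occur at distinct heights for any fixed pair), this is a routine continuity argument, after which the two directions follow purely combinatorially from the inversion count of $\pi$.
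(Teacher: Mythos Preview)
Your argument is correct. The parity lemma is exactly the right observation: sweeping a horizontal line down the projection, the relative left-to-right order of $s_i$ and $s_j$ flips precisely at crossings of that pair, so $N(B)(i,j)$ is even if and only if $\pi(i)<\pi(j)$ (for $i<j$). Both implications then follow immediately, as you wrote.

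As for comparison with the paper: there is nothing to compare against, since the paper does not prove this proposition but merely quotes it from \cite{AY-5}. Your self-contained parity/sweep argument is the standard one and would be an entirely acceptable proof; the only cosmetic point is that the general-position assumption (distinct crossing heights) is harmless here because it can always be arranged by an isotopy that does not change the crossing count between any pair of strands.
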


\begin{definition}
A matrix $M$ is said to be {\it CN-realizable} when $M$ is the CN matrix of some braid projection. 
\end{definition}

\begin{proposition}[\cite{AY-5}]
If a square matrix $M$ is CN-realizable, then $M$ is T0. 
\label{prop-T0}
\end{proposition}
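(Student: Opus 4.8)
The plan is to prove the statement directly, using the \emph{level orderings} induced by a braid projection. Fix a braid projection $B$ with $N(B)=M$. After a small isotopy we may assume $B$ is in general position: all crossings are transverse double points occurring at pairwise distinct heights, none lying on the top or bottom bar. Since each strand of a braid runs monotonically downward from the top bar to the bottom bar without returning, every strand meets each horizontal level $\ell_t=\mathbb{R}^2\times\{t\}$ (with $t$ strictly between the bars) in exactly one point. Hence, for each $t$ avoiding the finitely many crossing heights, the left-to-right order of these intersection points defines a linear ordering $\prec_t$ of the strands $s_1,\dots,s_n$. Near the top bar this ordering is $s_1\prec s_2\prec\cdots\prec s_n$, and as $t$ increases past a crossing the ordering $\prec_t$ changes exactly by transposing the two strands at that crossing, which are necessarily adjacent in $\prec_t$ just above it.

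The key observation I would then isolate is this: for a fixed pair $i\neq j$, the relative order of $s_i$ and $s_j$ in $\prec_t$ changes as $t$ crosses a given crossing height if and only if that crossing is a crossing of $s_i$ and $s_j$; a crossing involving neither or only one of $s_i,s_j$ leaves their relative order unchanged. Therefore, if $M(i,j)=0$ (no crossing of $s_i$ and $s_j$), the relative order of $s_i$ and $s_j$ is the same at every admissible level; combined with the ordering near the top, $i<j$ together with $M(i,j)=0$ forces $s_i\prec_t s_j$ for all such $t$.

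Now let $1\le i<j<k\le n$ with $M(i,j)=M(j,k)=0$. By the previous step, $s_i\prec_t s_j$ and $s_j\prec_t s_k$ for every admissible $t$, so transitivity of the linear order $\prec_t$ gives $s_i\prec_t s_k$ for all $t$. In particular the relative order of $s_i$ and $s_k$ never changes, so there is no crossing between $s_i$ and $s_k$, i.e. $M(i,k)=0$. Since $N(B)$ is zero-diagonal by definition, this establishes that $M$ is T0. I do not expect a genuine obstacle here; the only points that need to be written carefully are the general-position reduction and the verification that a pair's relative order is altered only by that pair's own crossings, both of which are routine once the level-ordering picture is fixed.
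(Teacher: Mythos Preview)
Your argument is correct. The level-ordering picture is exactly the right tool: once you know that two strands with $M(i,j)=0$ never change relative order, transitivity of the linear order at each height forces $M(i,k)=0$ whenever $M(i,j)=M(j,k)=0$ with $i<j<k$. The general-position reduction and the claim that only a crossing of $s_i$ with $s_j$ can alter their relative order are both routine, as you note.

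As for comparison: the present paper does not supply its own proof of this proposition; it is quoted from \cite{AY-5} without argument. So there is nothing here to compare your approach against. Your write-up stands on its own as a complete and standard proof of the statement.
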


\noindent For $n \leq 5$, Conjecture \ref{conj-CN} was shown to be true by the second and third authors. 

\begin{theorem}[\cite{AY-5}]
When $n \leq 5$, an $n \times n$ non-negative integer matrix $M$ is the CN matrix of some pure $n$-braid projection if and only if $M$ is an even T0 symmetric matrix. 
\label{thm-55}
\end{theorem}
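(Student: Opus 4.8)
Since the statement is an equivalence, the plan is to treat the two directions separately, the \emph{only if} direction being immediate from the properties already recorded. Suppose first that $M=N(B)$ for a pure $n$-braid projection $B$. Then $M$ is symmetric and has even entries and is T0: symmetry is built into the definition of the CN matrix (each crossing between $s_i$ and $s_j$ contributes to both the $(i,j)$ and $(j,i)$ entries, as noted after the definition), evenness of every entry is exactly Proposition \ref{prop-pure-even} applied to the pure projection $B$, and the T0 property is Proposition \ref{prop-T0} since $M$ is CN-realizable by hypothesis. Thus $M$ is an even T0 symmetric matrix, and this direction holds with no restriction on $n$.

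The substance is the converse, and here the plan is to use additivity to reduce realizability to a supply of elementary matrices and then concatenate. Concretely, by Proposition \ref{prop-pure-sum}, if we can write $M=\sum_t M_t$ where each $M_t$ is the CN matrix of a \emph{pure} $n$-braid projection $B_t$, then $M=N(B_1B_2\cdots)$ is CN-realizable by a pure projection. So it suffices to exhibit, for every even T0 symmetric $M$ with $n\le 5$, such a decomposition into realizable summands. The basic realizable summands are ``weaving'' projections: an over-strand $s_p$ that sweeps across a consecutive block of strands and returns, possibly oscillating against individual strands to accumulate extra crossings, yields a pure projection whose CN matrix is supported on a single row/column block and realizes prescribed even values there. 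For instance the word $\sigma_1\sigma_2^{2m}\sigma_1$ in the standard braid generators realizes $N(1,2)=2$, $N(1,3)=2m$, $N(2,3)=0$ on three strands, and mirror images give the analogous left-sweeping blocks. The only presently known obstruction to CN-realizability is the T0 condition (Proposition \ref{prop-T0}), so the entire content of this direction is that, for $n\le 5$, T0 is the \emph{only} obstruction.

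I expect the completeness of the decomposition to be the main obstacle. The naive moves fail: it is tempting to peel off the crossings of the last strand, or to subtract a maximal constant block from $M$, but either operation can leave a remainder that is no longer T0 or no longer non-negative. For example, the column of $s_n$ in a T0 matrix need not be supported on a consecutive interval (one may have $M(i,n)>0$, $M(j,n)=0$ with $i<j<n$, provided $M(i,j)>0$), so that column in isolation is not itself T0; and subtracting a uniform block can create new zero entries that violate T0. Consequently the summands $M_t$ must be chosen to match the actual support geometry of $M$, which is precisely the delicate point.

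For $n\le 5$ this can be pushed through by exploiting finiteness: one organizes the finitely many possible shapes of T0 supports on at most five strands, and for each shape realizes the corresponding family of even symmetric matrices explicitly, using the weaving blocks above together with their mirrors and concatenations, and with the repetition mechanism (as in $\sigma_1\sigma_2^{2m}\sigma_1$) to attain arbitrary even entry values. Checking that every shape is covered, and that the chosen summands are simultaneously non-negative, T0, and realizable by pure projections, is the core verification; the small bound on $n$ is what keeps the case analysis finite and tractable. This establishes Conjecture \ref{conj-CN} for $n\le 5$.
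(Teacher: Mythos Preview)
This theorem is quoted from \cite{AY-5} and is not proved in the present paper, so there is no in-paper argument to compare against directly. Your forward direction is correct and invokes exactly the right propositions.

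For the converse, however, what you have written is a plan rather than a proof. You correctly identify that additivity (Proposition~\ref{prop-pure-sum}) reduces the problem to exhibiting a decomposition $M=\sum_t M_t$ into pure-realizable summands, and you correctly flag that naive peeling moves destroy the T0 property. But you then defer the entire content to an unperformed case analysis: ``Checking that every shape is covered\ldots is the core verification.'' Asserting that this is ``finite and tractable'' for $n\le 5$ is not the same as doing it; the whole difficulty of the theorem lies precisely in that enumeration, and none of it appears here.

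It is also worth noting that the method of \cite{AY-5}, which this paper extends to $n=6$, is organized differently from your proposal. One first reduces via Proposition~\ref{prop-M02} to $(0,2)$-matrices, so your ``repetition mechanism'' $\sigma_1\sigma_2^{2m}\sigma_1$ for attaining arbitrary even values is unnecessary: only the support pattern matters. Realizability of each T0 $(0,2)$-matrix is then established not by ad hoc weaving words but by the BW-ladder-diagram calculus of Section~\ref{section-BW}, transforming a B-ladder diagram of $M$ into a W-ladder diagram by the moves L1--L9 (Proposition~\ref{prop-W}). This gives a uniform combinatorial mechanism for the case analysis and is what makes the verification scalable to $n=6$. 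Your direct-decomposition idea could in principle be completed for $n\le 5$, but absent the actual casework it remains a sketch.
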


\begin{definition}
For a square matrix $M$, the {\it reverse of $M$}, denoted by $M'$, is the matrix that is obtained from $M$ by reversing the order of the row and column. 
\end{definition}

\begin{proposition}[\cite{AY-5}]
If a matrix $M$ is CN-realizable, then $M'$ is also CN-realizable. 
\label{prop-reverse}
\end{proposition}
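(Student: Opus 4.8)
The plan is to produce a projection realizing $M'$ as a mirror image of one realizing $M$. Suppose $M$ is CN-realizable; I would fix an $n$-braid projection $B$ with $N(B)=M$, and let $\overline{B}$ be its reflection across a vertical line (equivalently, the picture of $B$ read from right to left). I would first check that $\overline{B}$ is again a legitimate $n$-braid projection: the reflection does not affect the vertical direction, so each strand still runs monotonically from the upper bar to the lower bar. Next I would track the induced relabeling of strands: the reflection carries the strand occupying the $i^{th}$ position from the left at the top of $B$ to the strand occupying the $(n+1-i)^{th}$ position at the top of $\overline{B}$, and it gives a bijection between the crossings of $B$ and those of $\overline{B}$ that preserves the (unordered) pair of strands meeting at each crossing.

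From this it follows that for all $i,j$ the number of crossings between the $i^{th}$ and $j^{th}$ strands of $\overline{B}$ equals the number of crossings between the $(n+1-i)^{th}$ and $(n+1-j)^{th}$ strands of $B$, i.e. $N(\overline{B})(i,j)=N(B)(n+1-i,\,n+1-j)=M(n+1-i,\,n+1-j)$. By the definition of the reverse, $M'(i,j)=M(n+1-i,\,n+1-j)$, so $N(\overline{B})=M'$ and hence $M'$ is CN-realizable.

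I do not expect a genuine obstacle here: the whole argument is the strand relabeling induced by a planar symmetry, and reflection across a vertical line is precisely the geometric incarnation of the index permutation $i\mapsto n+1-i$ that turns $M$ into $M'$. The only steps requiring (brief) care are the routine verification that the mirror image is still a braid projection and keeping the index bookkeeping straight. I would also note that purity is never used, so the same argument shows the reverse of the CN matrix of an arbitrary braid projection is CN-realizable.
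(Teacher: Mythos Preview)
Your argument is correct and is the natural one: reflecting a realizing braid projection across a vertical axis preserves monotonicity of the strands and induces the relabeling $i\mapsto n+1-i$ at the top, so the CN matrix of the mirror image is exactly $M'$. Note, however, that the present paper does not supply its own proof of this proposition; it is quoted from \cite{AY-5} without argument, so there is no in-paper proof to compare against---your mirror-image construction is the standard justification and almost certainly what the cited reference does as well.
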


Since CN matrices are zero-diagonal symmetric matrices, we represent them by strictly upper triangular matrices for simplicity by replacing all the entries below the main diagonal with 0, until Section \ref{sub-sec-66}. 
A {\it $(0,2)$-matrix} is a matrix whose entries are 0 or 2. 
The following proposition was shown in \cite{AY-5}. 

\begin{proposition}[\cite{AY-5}]
Let $M$ be an $n \times n$ strictly upper triangular matrix whose entries are non-negative even numbers. 
Let $M^{02}$ be the $(0,2)$-matrix that is obtained from $M$ by replacing all the non-zero entries with 2. 
If $M^{02}$ is CN-realizable, then $M$ is also CN-realizable by a pure braid projection. 
\label{prop-M02}
\end{proposition}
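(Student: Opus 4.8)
The plan is to start from a braid projection that realizes $M^{02}$ and to \emph{locally fatten} its crossings. Since $M^{02}$ is CN-realizable, fix a braid projection $B_0$ with $N(B_0)=M^{02}$. Every entry of $M^{02}$ is $0$ or $2$, hence even, so Proposition \ref{prop-pure-even} guarantees that $B_0$ is already a \emph{pure} braid projection. Moreover, by the construction of $M^{02}$, for each pair $i<j$ with $M(i,j)\neq 0$ we have $M^{02}(i,j)=2$, so $s_i$ and $s_j$ meet in exactly two crossings of $B_0$; and if $M(i,j)=0$ then $s_i$ and $s_j$ do not meet in $B_0$ at all.

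Next I would modify $B_0$ pair by pair. For each $i<j$ with $M(i,j)=2k>2$, choose one of the two crossing points $p$ between $s_i$ and $s_j$, together with a small disk $D$ around $p$ meeting $B_0$ in exactly the two transverse arcs $s_i\cap D$ and $s_j\cap D$. Inside $D$, replace the arc $s_i\cap D$ by a monotone ``sawtooth'' arc crossing $s_j\cap D$ exactly $1+2(k-1)$ times, keeping the vertical coordinate strictly decreasing so that the result is still a braid projection. Because $D$ meets no strand other than $s_i$ and $s_j$, this move creates exactly $2(k-1)$ new crossings, all between $s_i$ and $s_j$, and it fixes every endpoint, so purity is preserved. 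Carrying this out for all such pairs, using pairwise disjoint disks (for instance performing the moves one at a time in ever-smaller disks), yields a pure braid projection $B$.

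Finally I would verify $N(B)=M$ entry by entry. If $M(i,j)=0$, the strands $s_i,s_j$ were disjoint in $B_0$ and were never touched, so $N(B)(i,j)=0$. If $M(i,j)=2$, they had two crossings in $B_0$ and were never touched, so $N(B)(i,j)=2$. If $M(i,j)=2k>2$, the two original crossings are joined by $2(k-1)$ new ones, giving $N(B)(i,j)=2+2(k-1)=2k=M(i,j)$. Since $N(B)$ is the zero-diagonal symmetric matrix determined by its entries above the diagonal, this shows $N(B)=M$ (identifying $M$ with the symmetric matrix it represents), so $M$ is CN-realizable by a pure braid projection.

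The point that needs the most care is making the local sawtooth replacement rigorous \emph{inside the braid category}: one must check that the replacement arc can genuinely be taken monotone in the vertical direction (so that $B$ is a bona fide braid projection, not merely a tangle projection), that $D$ can be shrunk to avoid all other strands (immediate from transversality and compactness), and that different pairs can be processed in disjoint disks. A tempting shortcut through Proposition \ref{prop-pure-sum}, namely realizing the ``missing'' $2(k-1)$ crossings by a separate pure braid projection and multiplying it onto $B_0$, is \emph{not} obviously available: a matrix with a single nonzero entry at a non-adjacent position $(i,j)$ fails to be T0 and hence is not CN-realizable by Proposition \ref{prop-T0}, so one cannot simply invoke Theorem \ref{thm-55} for the ``extra'' matrix. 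The explicit geometric move sidesteps this obstruction, since it never asks for a realization of that extra matrix in isolation.
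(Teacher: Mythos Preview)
The paper does not supply a proof of this proposition; it is quoted from \cite{AY-5} and used as a black box, so there is nothing in the present paper to compare your argument against.

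Your argument is correct. The one point you flag---that the sawtooth replacement can be kept strictly monotone---becomes transparent if you rephrase the move in terms of braid words: writing $B_0=\sigma_{i_1}\cdots\sigma_{i_m}$ as a word in the projection generators, each pair $s_i,s_j$ with $M(i,j)=2k>0$ crosses at some letter $\sigma_{i_t}$, and replacing that single letter by $\sigma_{i_t}^{\,2k-1}$ adds exactly $2(k-1)$ crossings between $s_i$ and $s_j$ and touches no other pair, while manifestly yielding a braid word (hence a monotone projection) with the same permutation, so purity is preserved. This is exactly your disk move seen algebraically, and it removes any residual worry about monotonicity or about choosing disjoint disks. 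Your closing remark is also on point: the additive shortcut through Proposition~\ref{prop-pure-sum} is genuinely blocked by Proposition~\ref{prop-T0}, so the local crossing-multiplication is the right mechanism here.
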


\noindent By Propositions \ref{prop-pure-sum} and \ref{prop-pure-even}, we have the following proposition, which will be used in Section \ref{section-conf}. 

\begin{proposition}[\cite{AY-5}]
If $n \times n$ (0,2)-matrices $M_1$, $M_2$ are CN-realizable, then the sum $M_1 +M_2$ is also CN-realizable by a pure braid projection. 
\label{prop-02-sum}
\end{proposition}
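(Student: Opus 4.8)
The plan is to realize each summand by a braid projection, observe that these projections are forced to be pure, and then stack them vertically so that the additivity of the CN matrix under products (Proposition \ref{prop-pure-sum}) produces the sum.

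First I would use the hypothesis that $M_1$ and $M_2$ are CN-realizable to fix braid projections $B_1$ and $B_2$ with $N(B_1)=M_1$ and $N(B_2)=M_2$. The key observation is that, since $M_1$ and $M_2$ are $(0,2)$-matrices, every entry of each $N(B_k)$ is even. By Proposition \ref{prop-pure-even}, this forces both $B_1$ and $B_2$ to be pure braid projections. This is the step that makes the whole argument go through: purity of $B_1$ is precisely the hypothesis needed to invoke the additivity formula, and here it comes for free from the $(0,2)$ assumption rather than having to be arranged by hand.

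Next I would form the product $B_1 B_2$, obtained by connecting the $i^{th}$ endpoint of $B_1$ to the $i^{th}$ initial point of $B_2$ for each $i$. Since the strand of $B_1$ starting at position $i$ ends at position $i$, and likewise the strand of $B_2$ starting at position $i$ ends at position $i$, the composite strand starting at position $i$ at the top ends at position $i$ at the bottom; hence $B_1 B_2$ is again a pure braid projection. Because $B_1$ is pure, Proposition \ref{prop-pure-sum} gives $N(B_1 B_2)=N(B_1)+N(B_2)=M_1+M_2$. Therefore $M_1+M_2$ is the CN matrix of the pure braid projection $B_1 B_2$, which is exactly the assertion.

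I do not anticipate a genuine obstacle here: the substance of the argument lies entirely in correctly invoking Propositions \ref{prop-pure-even} and \ref{prop-pure-sum}, and the only point that deserves an explicit check is that the product of two pure braid projections is pure, which is immediate from tracking the endpoints of the strands through the concatenation.
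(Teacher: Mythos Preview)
Your proposal is correct and matches the paper's own justification: the paper simply notes that the proposition follows from Propositions \ref{prop-pure-sum} and \ref{prop-pure-even}, which is exactly the pair of results you invoke, in the same order and for the same reasons.
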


\noindent We have the following proposition.

\begin{proposition}
Let $M_1$ be an $m \times m$ CN-realizable matrix. 
Let $M_2$ be an $n \times n$ matrix ($n \geq m$) such that 
\begin{align*}
\begin{cases}
M_2(i,j)=0 \text{ when } i<I \text{ or } j> I+m-1 \\
M_2(i+I-1, j+I-1) = M_1(i,j)
\end{cases}
\end{align*}
for some $1 \leq I \leq n-m+1$. 
Then $M_2$ is CN-realizable. 
\label{prop-small}
\end{proposition}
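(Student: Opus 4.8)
The plan is to realize $M_2$ as the CN matrix of a braid projection built by ``padding'' a braid projection that realizes $M_1$. Start from an $m$-braid projection $B_1$ with $N(B_1)=M_1$, which exists by hypothesis. The matrix $M_2$ describes crossings among $n$ strands in which the only crossings occur among the block of $m$ consecutive strands indexed $I, I+1, \dots, I+m-1$, and these restricted crossings are governed exactly by $M_1$. So the natural candidate for a braid projection realizing $M_2$ is the $n$-braid projection $B_2$ obtained by placing $I-1$ trivial (vertical, crossingless) strands to the left of $B_1$ and $n-I-m+1$ trivial strands to the right of $B_1$, i.e.\ $B_2$ is the ``disjoint juxtaposition'' of a trivial $(I-1)$-braid projection, $B_1$, and a trivial $(n-I-m+1)$-braid projection, read left to right.

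The key steps, in order, would be: first, invoke the hypothesis to fix $B_1$ with $N(B_1) = M_1$; second, define $B_2$ formally as the horizontal juxtaposition described above (this is a legitimate $n$-braid projection since juxtaposing braid projections side by side yields a braid projection on the disjoint union of the strands); third, compute $N(B_2)$ entrywise. For indices $i, j$ both lying in $\{I, \dots, I+m-1\}$, the crossings between the corresponding strands of $B_2$ are precisely the crossings between strands $i-I+1$ and $j-I+1$ of $B_1$, giving $N(B_2)(i,j) = M_1(i-I+1, j-I+1) = M_2(i,j)$ by the defining relation. For any pair $(i,j)$ with $i < I$ or $j > I+m-1$, at least one of the two strands is one of the trivial padding strands, which is a single vertical arc crossing nothing, so $N(B_2)(i,j) = 0 = M_2(i,j)$. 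The diagonal entries agree trivially since both matrices are zero-diagonal. Hence $N(B_2) = M_2$, and $M_2$ is CN-realizable.

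The only point requiring care — and the place where I expect the ``obstacle,'' such as it is — is the claim that in the juxtaposition $B_2$ the crossings among the middle block of strands are exactly those of $B_1$, with no new crossings created and none destroyed. This is intuitively clear because side-by-side juxtaposition introduces no interaction between strands in different blocks, but one should state it cleanly, perhaps by noting that $B_2$ can be isotoped (as a braid projection, via an ambient isotopy of the strip supported away from the copies of the regions containing $B_1$ and the padding) so that each block occupies its own vertical sub-strip; then every crossing of $B_2$ lies in exactly one sub-strip, the middle sub-strip contains a copy of $B_1$, and the others contain only parallel vertical arcs. Once this structural observation is in place, the entrywise verification above is immediate. (One could alternatively phrase this inductively using Proposition~\ref{prop-pure-sum}-style additivity together with Proposition~\ref{prop-small} for $m=n-1$, but the direct juxtaposition argument is cleaner and handles the general block position in one stroke.)
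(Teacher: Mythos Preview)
Your proposal is correct and matches the paper's own proof exactly: take a realizing $m$-braid projection $B_1$ for $M_1$, pad it with $I-1$ trivial strands on the left and $n-m-I+1$ trivial strands on the right, and observe that the resulting $n$-braid projection $B_2$ has $N(B_2)=M_2$. The paper states this in two sentences without the detailed entrywise verification you supply, but the argument is the same.
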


\begin{proof}
Let $B_1$ be an $m$-braid projection such that $N(B_1)=M_1$. 
Place $I-1$ strands on the left-hand side and $n-m-I+1$ strands on the right-hand side to obtain an $n$-braid projection $B_2$. 
Then $N(B_2)=M_2$. 
(See Figure \ref{fig-lem}.)
\end{proof}

\begin{figure}[ht]
\centering
\includegraphics[width=7cm]{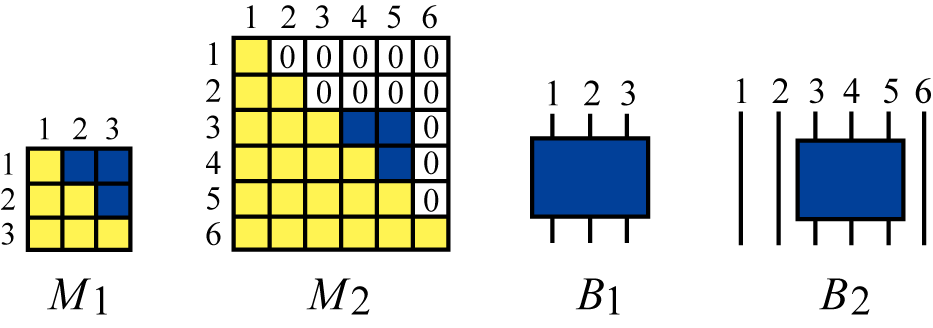}
\caption{An example of Proposition \ref{prop-small} with $m=3, n=6, I=3$. }
\label{fig-lem}
\end{figure}

\noindent For $6 \times 6$ matrices, we have the following lemma by Proposition \ref{prop-small} and Theorem \ref{thm-55}. 

\begin{lemma}
Let $M$ be a $6 \times 6$ T0 strictly upper triangular $(0,2)$-matrix such that $M(i,j)=0$ when $i<I$ or $j>I-m-1$ for some integers $1 \leq m \leq 5$ and $1 \leq I \leq 7-m$. 
Then $M$ is CN-realizable. 
\label{lem-15}
\end{lemma}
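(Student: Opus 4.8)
\emph{Proof proposal.}
The plan is to reduce to the already established $n\le 5$ case (Theorem \ref{thm-55}) by restricting $M$ to the $m\times m$ block that carries all of its nonzero entries, and then to re-inflate the resulting projection back to $6$ strands via Proposition \ref{prop-small}. Concretely, the support hypothesis on $M$ says exactly that every nonzero entry of $M$ lies in the $m\times m$ diagonal block indexed by $\{I,I+1,\dots,I+m-1\}$.

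First I would define $M_1$ to be the $m\times m$ strictly upper triangular matrix with $M_1(i,j)=M(i+I-1,\,j+I-1)$, i.e.\ the submatrix of $M$ on rows and columns $I,\dots,I+m-1$. By the support hypothesis, $M$ is then obtained from $M_1$ in precisely the way required by Proposition \ref{prop-small}, taking $n=6$ there.

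Next I would check that $M_1$, regarded (per the convention fixed above) as the strictly upper triangular representative of a zero-diagonal symmetric matrix, is an even T0 symmetric matrix. Evenness is clear since every entry of $M$, hence of $M_1$, is $0$ or $2$. The T0 condition for $M_1$ only involves triples $1\le i<j<k\le m$, which correspond bijectively to triples $I\le i'<j'<k'\le I+m-1$ for $M$; since $M$ is T0, so is $M_1$. As $m\le 5$, Theorem \ref{thm-55} now yields a pure $m$-braid projection $B_1$ with $N(B_1)=M_1$, i.e.\ $M_1$ is CN-realizable.

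Finally, since $1\le I\le 7-m=n-m+1$, the index hypothesis of Proposition \ref{prop-small} is met, so that proposition applied to the CN-realizable matrix $M_1$ produces a $6$-braid projection $B_2$ with $N(B_2)=M$; hence $M$ is CN-realizable. I do not expect a genuine obstacle here: no new geometric construction is needed beyond Theorem \ref{thm-55} and Proposition \ref{prop-small}, and the only point requiring care is the bookkeeping — matching the support condition and the bound $I\le 7-m$ with the hypotheses of Proposition \ref{prop-small} (reading the stated bound $j>I-m-1$ as $j>I+m-1$), and verifying that evenness and T0-ness restrict to the diagonal block, all of which are immediate.
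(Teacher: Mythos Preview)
Your proposal is correct and is exactly the argument the paper intends: the lemma is stated as an immediate consequence of Proposition~\ref{prop-small} together with Theorem~\ref{thm-55}, which is precisely the restrict-to-the-$m\times m$-block-then-reinflate reasoning you wrote out. Your reading of the bound as $j>I+m-1$ is also the intended one.
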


\section{BW-ladder diagram}
\label{section-BW}

In this section, we review the BW-ladder diagram which was introduced in \cite{AY-5} to test the CN-realizability of a matrix. 
A {\it BW-ladder diagram} is a ladder-fashioned diagram of $n$ vertical segments with two types of horizontal edges, a {\it black edge} (resp. {\it white edge}), $B^i_j$ (resp. $W^k_{k+1}$), with black vertices (resp. white vertices) on the endpoints on the $i^{th}$ and $j^{th}$ (resp. $k^{th}$ and $k+1^{th}$) segments. 
In particular, we call a BW-ladder diagram $D$ a {\it B-ladder diagram} (resp. {\it W-ladder diagram}) when $D$ has only black (resp. white) edges. 
Each BW-ladder diagram is represented by a sequence of $B^i_j$ and $W^k_{k+1}$ as well. 

\begin{definition}[\cite{AY-5}] 
Let $M$ be an $n \times n$ $(0,2)$-strictly upper triangular matrix. 
A {\it B-ladder diagram of $M$} is a B-ladder diagram with black edges $B^i_j$ for all the entries $M(i,j)=2$. 
\end{definition}

\noindent Here, as mentioned in \cite{AY-5}, a black edge $B$ implies a hook between the two strands where $B$ has the endpoints when we regard the $n$ segments as $n$ strands of a braid. 
Also, a white edge $W$ implies a crossing between the two strands where $W$ has the endpoints. 

\begin{definition}[\cite{AY-5}]
A {\it ladder move} is each of the local transformations on BW-ladder diagram depicted in Figure \ref{fig-l-move}, namely the following transformations.
\begin{itemize}
\item[(L1)] $B^i_{i+1} \leftrightarrow W^i_{i+1} W^i_{i+1}$  for any $i$. 
\item[(L2)] $B^i_j B^k_l \leftrightarrow B^k_l B^i_j$ for any $i<j, \ k<l$. 
\item[(L3)] $W^k_{k+1} W^l_{l+1} \leftrightarrow W^l_{l+1} W^k_{k+1}$ when $k+1<l$ or $l+1<k$. 
\item[(L4)] $W^i_{i+1} W^{i+1}_{i+2} W^i_{i+1} \leftrightarrow W^{i+1}_{i+2} W^i_{i+1} W^{i+1}_{i+2}$ for any $i$. 
\item[(L5)] $B^i_j W^k_{k+1} \leftrightarrow W^k_{k+1} B^i_j$ when $j<k$, $k+1<i$ or $i<k<k+1<j$.
\item[(L6)] $B^i_j W^i_{i+1} \leftrightarrow W^i_{i+1} B^{i+1}_j$ when $i+1 < j$.
\item[(L7)] $W^i_{i+1} B^i_j \leftrightarrow B^{i+1}_j W^i_{i+1}$ when $i+1 < j$.
\item[(L8)] $B^i_j W^{j-1}_j \leftrightarrow W^{j-1}_j B^i_{j-1}$ when $i < j-1$.
\item[(L9)] $W^{j-1}_j B^i_j \leftrightarrow B^i_{j-1} W^{j-1}_j$ when $i < j-1$.
\end{itemize}
\end{definition}
\begin{figure}[ht]
\centering
\includegraphics[width=12cm]{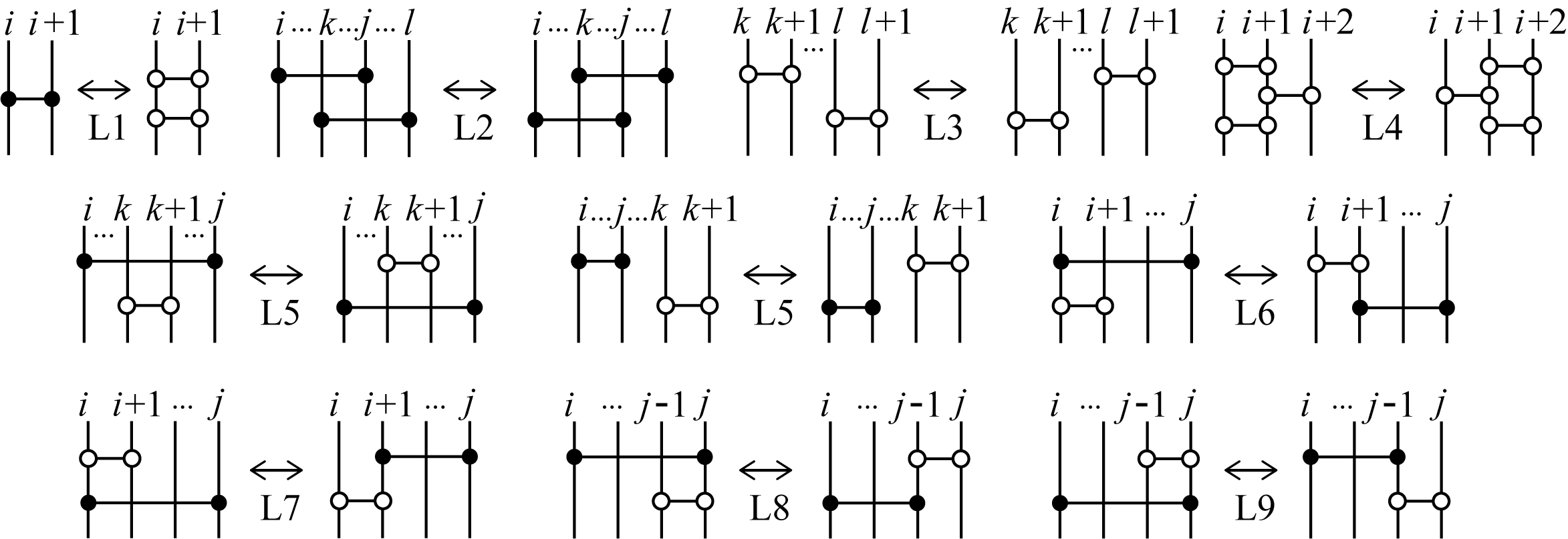}
\caption{Ladder moves. }
\label{fig-l-move}
\end{figure}

\begin{proposition}[\cite{AY-5}]
A strictly upper triangular $(0,2)$-matrix $M$ is CN-realizable if a B-ladder diagram of $M$ can be transformed into a W-ladder diagram by a finite sequence of ladder moves. 
\label{prop-W}
\end{proposition}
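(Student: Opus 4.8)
The plan is to promote the CN matrix from braid projections to arbitrary BW-ladder diagrams and then to show that this promoted matrix is invariant under ladder moves. For a BW-ladder diagram $D$ on $n$ segments, define an $n\times n$ zero-diagonal symmetric matrix $N(D)$ by scanning $D$ from top to bottom while maintaining a permutation $\pi$ that records which strand label currently sits at each position (initially the identity, so position $\ell$ carries the strand $s_\ell$): on meeting a white edge $W^k_{k+1}$, add $1$ to the $(\pi(k),\pi(k+1))$ entry, symmetrize, and then update $\pi$ by the transposition of positions $k$ and $k+1$; on meeting a black edge $B^i_j$, add $2$ to the $(\pi(i),\pi(j))$ entry, symmetrize, and leave $\pi$ unchanged. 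Two boundary observations come for free. If $D$ is the B-ladder diagram of a strictly upper triangular $(0,2)$-matrix $M$, then $D$ has no white edges, so $\pi\equiv\mathrm{id}$ throughout and each black edge $B^i_j$ (one for every entry $M(i,j)=2$) contributes $2$ to the $(i,j)$ entry, giving $N(D)=M$. If instead $D$ is a W-ladder diagram, then reading its white edges top to bottom as braid generators $\sigma_k$ produces a braid projection $B_D$, and the permutation bookkeeping is exactly the rule identifying, at each crossing, which two strands meet, so $N(D)=N(B_D)$; in particular $N(D)$ is CN-realizable.

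Granting these, it suffices to check that $N(D)$ is unchanged by each ladder move (L1)--(L9), since then $M=N(D)=N(D')=N(B_{D'})$ for the terminal W-ladder diagram $D'$ produced by the given sequence of moves, and $M$ is CN-realizable by $B_{D'}$. First I would dispatch the ``commuting'' moves: (L2) holds because black edges do not alter $\pi$, so both orders compute the two hooks under the same permutation; (L3) and (L5) hold because in every case the two edges being swapped act on disjoint sets of positions (for (L5) note that $i<k<k+1<j$ still forces $\{i,j\}\cap\{k,k+1\}=\varnothing$), so neither the strand labels they join nor the subsequent evolution of $\pi$ changes. Next, (L4) is the braid relation: both $W^i_{i+1}W^{i+1}_{i+2}W^i_{i+1}$ and $W^{i+1}_{i+2}W^i_{i+1}W^{i+1}_{i+2}$ effect the permutation swapping positions $i$ and $i+2$, and a direct check shows each word adds exactly $1$ to each of the three entries joining the strand labels initially at positions $i,i+1,i+2$. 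Move (L1) is the only one bridging the two colors, and it is immediate: $B^i_{i+1}$ adds $2$ to $(\pi(i),\pi(i+1))$ and restores $\pi$, while $W^i_{i+1}W^i_{i+1}$ adds $1+1$ to the same entry and also restores $\pi$. Finally, for (L6)--(L9) a black edge is pushed past an adjacent white edge while its superscripts shift, e.g. $B^i_jW^i_{i+1}\leftrightarrow W^i_{i+1}B^{i+1}_j$; here one verifies that the superscript shift exactly compensates for the transposition the white edge applies to $\pi$, so that the hook still joins the same two strand labels, with the white contribution and the net permutation unaffected.

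I expect the heart of the argument to be (L6)--(L9): one must confirm that the superscript-shift convention for a black edge crossing an adjacent white edge is precisely the one that undoes the white edge's transposition on the strand labeling, and one must check this in all four variants together with the boundary cases where an endpoint of the black edge coincides with one of the two positions the white edge touches. Everything else---the commuting moves, the braid relation, the single mixed move (L1), and the two boundary evaluations of $N(D)$---is short bookkeeping. Once $N(D)$ is established as a ladder-move invariant with these boundary values, the proposition follows at once; moreover, since every move preserves the net permutation and the B-ladder diagram of $M$ has the identity net permutation, so does the terminal W-ladder diagram, so $M$ is in fact realized by a \emph{pure} braid projection.
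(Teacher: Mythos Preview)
The paper does not supply its own proof of this proposition; it is quoted from \cite{AY-5} without argument. So there is no in-paper proof to compare your attempt against.

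That said, your approach is correct and self-contained. Extending $N(\cdot)$ to arbitrary BW-ladder diagrams via the running permutation $\pi$, checking invariance under each ladder move, and evaluating at the two boundary cases (a B-ladder diagram gives $M$; a W-ladder diagram gives the CN matrix of the associated braid projection) is exactly the right shape of argument. All nine move-checks go through as you outline: (L2), (L3), (L5) are disjoint-support commutations; (L4) is the braid relation and contributes $1$ to each of the three pairwise entries on both sides; (L1) is immediate; and in (L6)--(L9) the superscript shift on the black edge is precisely what compensates for the transposition applied by the adjacent white edge, so the hook joins the same pair of strand labels before and after. Your closing observation that the terminal W-ladder diagram has identity net permutation, hence yields a \emph{pure} braid projection, is also correct and consistent with Proposition~\ref{prop-pure-even}.

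The only contextual hint the present paper gives about the intended argument is the remark that ``a black edge $B$ implies a hook between the two strands where $B$ has the endpoints'' and ``a white edge $W$ implies a crossing''. This suggests that \cite{AY-5} interprets a BW-ladder diagram more geometrically, as a braid-like object on which the ladder moves are CN-preserving modifications. Your invariant-based packaging is an equivalent and arguably cleaner way to express the same content, with the advantage that one never has to say what braid a black edge of length $>1$ ``is''.
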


\begin{example}
The $(0,2)$-matrix $M$ in Figure \ref{fig-l-ex} is CN-realizable since the B-ladder diagram of $M$ is transformed into a W-ladder diagram by ladder moves. 
\begin{figure}[ht]
\centering
\includegraphics[width=10cm]{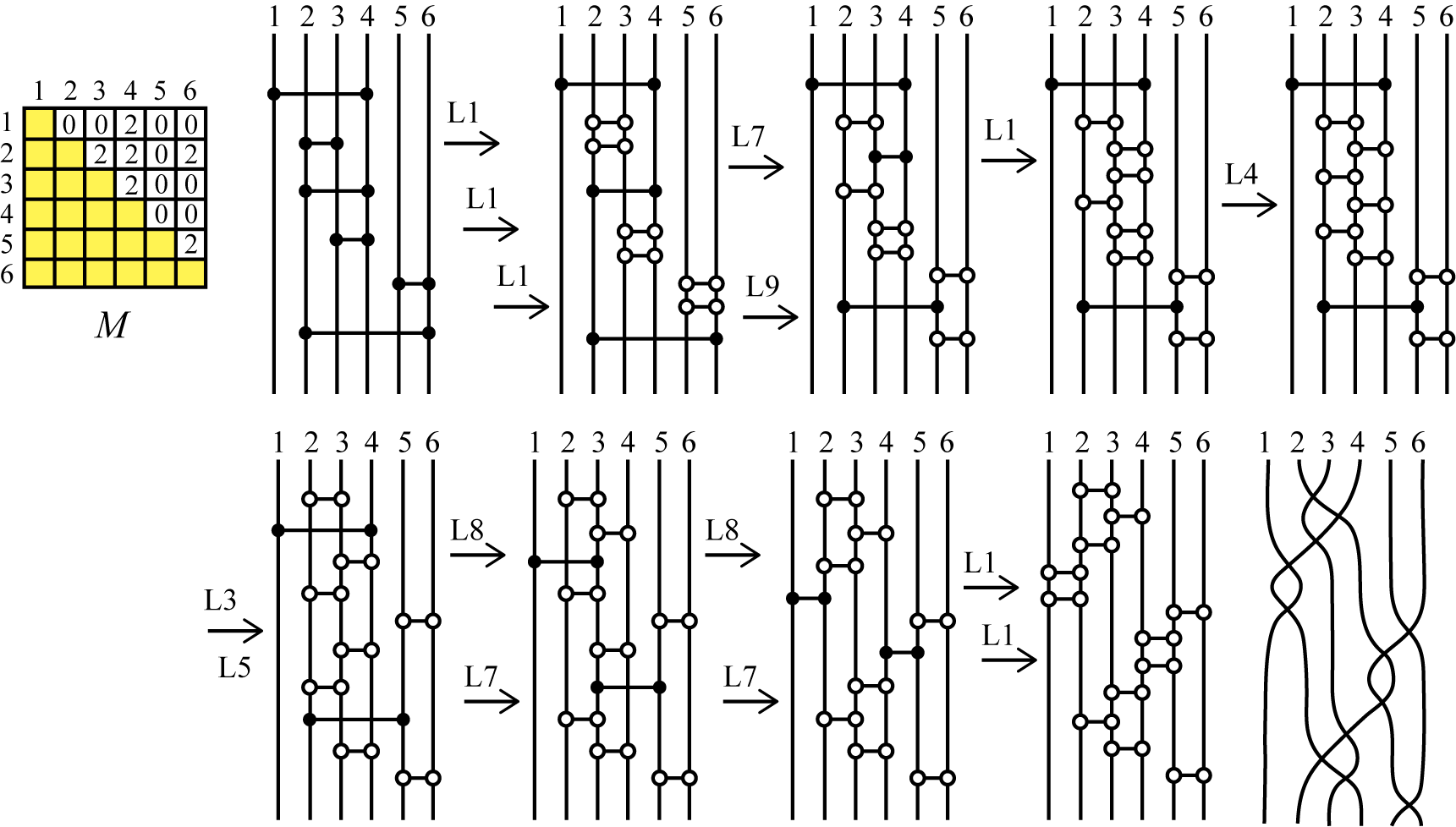}
\caption{The matrix $M$ is CN-realizable since the B-ladder diagram of $M$ is transformed into a W-ladder diagram. }
\label{fig-l-ex}
\end{figure}
\end{example}

\begin{proposition}[\cite{AY-5}]
The sequence $B^{k}_{k+1} B^{k}_{k+2} B^{k}_{k+3} \dots B^{k}_{l-1} B^{k}_{l}$ is equivalent to $W^{k}_{k+1} W^{k+1}_{k+2} W^{k+2}_{k+3} \dots W^{l-2}_{l-1}W^{l-1}_{l}W^{l-1}_{l}W^{l-2}_{l-1} \dots W^{k+2}_{k+3}W^{k+1}_{k+2}W^{k}_{k+1}$ up to ladder moves. 
The sequence $B^{l-1}_{l} B^{l-2}_{l} B^{l-3}_{l} \dots B^{k+1}_{l} B^{k}_{l}$ is equivalent to \\
$W^{l-1}_{l} W^{l-2}_{l-1} W^{l-3}_{l-2} \dots W^{k+1}_{k+2}W^{k}_{k+1}W^{k}_{k+1}W^{k+1}_{k+2} \dots W^{l-3}_{l-2}W^{l-2}_{l-1}W^{l-1}_{l}$ up to ladder moves. 
\label{prop-BtoW}
\end{proposition}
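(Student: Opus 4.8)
The plan is to prove the first equivalence by induction on $l-k$, and then to obtain the second one from it by the reflection symmetry of the ladder moves. Write $\beta_k^l$ for the $B$-word $B^k_{k+1}B^k_{k+2}\cdots B^k_{l}$ and $\omega_k^l$ for the palindromic $W$-word $W^k_{k+1}W^{k+1}_{k+2}\cdots W^{l-1}_{l}W^{l-1}_{l}W^{l-2}_{l-1}\cdots W^k_{k+1}$, so that the first claim reads $\beta_k^l\sim\omega_k^l$. When $l-k=1$ this is exactly move (L1). For the inductive step ($l-k\ge 2$) I would use that $\omega_k^l$ has the shape $W^k_{k+1}\,\omega_{k+1}^l\,W^k_{k+1}$; applying the induction hypothesis to the pair $(k+1,l)$ then gives
\[
\omega_k^l\ \sim\ W^k_{k+1}\;B^{k+1}_{k+2}B^{k+1}_{k+3}\cdots B^{k+1}_{l}\;W^k_{k+1}.
\]

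Next I would sweep the trailing letter $W^k_{k+1}$ to the left through the block of black edges. With $i=k$, move (L7) reads $B^{k+1}_{j}W^k_{k+1}\leftrightarrow W^k_{k+1}B^k_{j}$, and it is applicable for every $j\in\{k+2,\dots,l\}$ since $k+1<j$. Applying it successively to $B^{k+1}_{l},B^{k+1}_{l-1},\dots,B^{k+1}_{k+2}$ pushes the travelling letter to the front while recording each $B^{k+1}_{j}$ as $B^k_{j}$; the recorded edges emerge in the order $B^k_{k+2}B^k_{k+3}\cdots B^k_{l}$ with no need for (L2). We are left with $W^k_{k+1}W^k_{k+1}\,B^k_{k+2}B^k_{k+3}\cdots B^k_{l}$, and one last application of (L1) turns $W^k_{k+1}W^k_{k+1}$ into $B^k_{k+1}$, producing $\beta_k^l$. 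This finishes the induction, hence the first equivalence.

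For the second equivalence I would invoke the reflection $i\mapsto n+1-i$ of the segment indices, the operation underlying Proposition \ref{prop-reverse}. A routine check shows it is a symmetry of the ladder-move relation: it fixes each of (L1)--(L5), and it interchanges (L6) with (L8) and (L7) with (L9). Under this reflection the fan $B^k_{k+1}B^k_{k+2}\cdots B^k_{l}$ goes exactly to $B^{l'-1}_{l'}B^{l'-2}_{l'}\cdots B^{k'}_{l'}$ and the palindrome $\omega_k^l$ goes exactly to the palindromic $W$-word of the statement, where $k'=n+1-l$ and $l'=n+1-k$; as $(k,l)$ ranges over all admissible pairs so does $(k',l')$, so the second equivalence is precisely the image of the first under the reflection. (Equivalently, one can simply rerun the induction above with (L9) in place of (L7).)

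The heart of the matter, and the only step needing care, is the inductive sweep: one has to notice that $\omega_k^l$ factors as $W^k_{k+1}\,\omega_{k+1}^l\,W^k_{k+1}$ so that the induction hypothesis applies, and then keep track that (L7) stays legitimate at every stage, which it does because every black edge in the fan has smaller endpoint $k+1$, strictly below its larger endpoint. The base case is a single move and the reflection step is purely formal, so no further combinatorics is involved.
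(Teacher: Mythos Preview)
Your proof is correct. The induction with the factorisation $\omega_k^l = W^k_{k+1}\,\omega_{k+1}^l\,W^k_{k+1}$ followed by the (L7)-sweep is clean and the bookkeeping is right: each application of (L7) in the direction $B^{k+1}_j W^k_{k+1}\to W^k_{k+1}B^k_j$ is legitimate because $j\ge k+2$, and the transformed black edges do emerge in the order $B^k_{k+2}\cdots B^k_l$ so that no (L2) is needed. The reflection argument for the second equivalence is also sound; the check that $i\mapsto n+1-i$ fixes (L1)--(L5) and swaps (L6)$\leftrightarrow$(L8), (L7)$\leftrightarrow$(L9) is routine, and the image of $(\beta_k^l,\omega_k^l)$ is exactly the second pair of words.

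Note, however, that the present paper does not prove this proposition at all: it is quoted from \cite{AY-5} and stated without proof here. So there is no ``paper's own proof'' to compare against. Your argument stands on its own as a self-contained verification. The alternative you mention in passing---running the induction directly for the second sequence using (L9) in place of (L7)---is equally valid and avoids invoking the reflection symmetry, at the cost of repeating essentially the same computation.
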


\noindent The following proposition was shown in \cite{AY-5} using the BW-ladder diagram. 

\begin{proposition}[\cite{AY-5}]
Any $n \times n$ T0 upper triangular $(0,2)$-matrix such that $M(i,j)=0$ for $j-i \geq 3$ is CN-realizable. 
\label{prop-3zero}
\end{proposition}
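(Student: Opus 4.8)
The plan is to realize $M$ as a sum of $(0,2)$-matrices, each one supported on at most four consecutive indices and each obviously CN-realizable, and then to glue the pieces back together with Propositions \ref{prop-small} and \ref{prop-02-sum}.

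First I would record the combinatorics of the hypothesis. Writing $M(i,i+1)=2\varepsilon_i$ and $M(i,i+2)=2\delta_i$ with $\varepsilon_i,\delta_i\in\{0,1\}$, the assumption $M(i,j)=0$ for $j-i\geq3$ says these are the only possibly-nonzero entries, and the T0 condition says precisely that $\delta_i=1$ forces $\varepsilon_i=1$ or $\varepsilon_{i+1}=1$. For each $i$ with $\varepsilon_i=1$ I would form a block $B_i$: it always contains the super-diagonal entry $(i,i+1)$; it contains the chord $(i,i+2)$ whenever $\delta_i=1$; and it contains the chord $(i-1,i+1)$ whenever $\delta_{i-1}=1$ and $\varepsilon_{i-1}=0$ (that is, whenever this chord has no supporting super-diagonal entry to its left, and so must be charged to $(i,i+1)$). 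A short check shows that every chord is charged to exactly one block (to $B_j$ if $\varepsilon_j=1$, otherwise to $B_{j+1}$) and every super-diagonal entry lies in exactly one block, so $M=\sum_{i:\,\varepsilon_i=1}B_i$, a sum of $(0,2)$-matrices with pairwise disjoint supports. By construction each $B_i$ has one of only four shapes, all supported on $\{i-1,i,i+1,i+2\}$: a lone super-diagonal entry; a right fan $\{(i,i+1),(i,i+2)\}$; a reversed left fan $\{(i-1,i+1),(i,i+1)\}$; or a ``$Y$-shape'' $\{(i-1,i+1),(i,i+1),(i,i+2)\}$.

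Next I would verify that each of these four shapes is CN-realizable. A lone super-diagonal entry is handled by the move (L1), and the right and left fans are exactly the two cases of Proposition \ref{prop-BtoW}. For the $Y$-shape, relabelling so that it sits on strands $1,2,3,4$, I would transform its B-ladder diagram into a W-ladder diagram by the sequence $B^1_3B^2_3B^2_4 \leftrightarrow B^2_4B^2_3B^1_3 \leftrightarrow B^2_4W^2_3W^1_2W^1_2W^2_3 \leftrightarrow W^2_3B^3_4W^1_2W^1_2W^2_3 \leftrightarrow W^2_3W^1_2W^1_2B^3_4W^2_3 \leftrightarrow W^2_3W^1_2W^1_2W^3_4W^3_4W^2_3$, using (L2), Proposition \ref{prop-BtoW}, (L6), (L5) and (L1) in turn; by Proposition \ref{prop-W} this gives CN-realizability of the $Y$-shape as a $4\times4$ matrix. (Alternatively, each block is a T0 $(0,2)$-matrix on at most five strands, so Theorem \ref{thm-55} applies directly.) Proposition \ref{prop-small} then promotes each block from its few relevant strands to an $n\times n$ matrix, and since the $B_i$ have disjoint supports, iterating Proposition \ref{prop-02-sum} shows $M=\sum_i B_i$ is CN-realizable.

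Most of the work is routine: checking that the charging of chords to blocks is well-defined and exhausts $M$, and checking the side conditions of the ladder moves (L5), (L6) in the $Y$-shape computation. The one point that needs genuine care is the choice of building blocks: a block such as the $Y$-shape cannot be split any further into CN-realizable $(0,2)$-pieces, because any nontrivial splitting leaves a piece consisting of chords without supporting super-diagonal entries, which is not T0 and hence (by Proposition \ref{prop-T0}) not CN-realizable. So the charging rule must be arranged so that every chord lands in exactly one block while keeping every block to one of these four irreducible shapes, and it is exactly the clause ``$\varepsilon_{i-1}=0$'' in the definition of $B_i$ that achieves this.
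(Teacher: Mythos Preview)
Your proof is correct. The charging rule is well-defined (each chord $(j,j+2)$ goes to $B_j$ if $\varepsilon_j=1$ and to $B_{j+1}$ otherwise, and in the latter case $\varepsilon_{j+1}=1$ by T0), the supports are disjoint so the sum is again a $(0,2)$-matrix and Proposition~\ref{prop-02-sum} iterates without difficulty, and your ladder computation for the $Y$-shape checks out move by move. The alternative shortcut via Theorem~\ref{thm-55} is also valid, since every block lives on at most four strands.

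As for comparison: the paper does not actually prove this proposition here; it only records that the result was established in \cite{AY-5} ``using the BW-ladder diagram''. So there is no in-paper argument to set yours against. What can be said is that the cited proof apparently works directly with a single B-ladder diagram for the whole matrix, whereas you instead decompose $M$ into small irreducible pieces (the four block shapes) and appeal to Propositions~\ref{prop-small} and~\ref{prop-02-sum} to reassemble. Your route is more structural and makes the role of the T0 hypothesis transparent---it is exactly what guarantees each chord can be charged to an adjacent super-diagonal entry---at the cost of invoking Theorem~\ref{thm-55} (or the explicit $Y$-shape computation) for the base cases. A direct ladder argument would avoid that citation but would be less modular.
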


\section{CN-realizable formations}
\label{section-formation}

In this section, we show the following three patterns of an $n \times n$ CN-realizable matrices, the snake, hang-glider, and loupe formations in Sections \ref{subsec-s}, \ref{subsec-h}, and \ref{subsec-l}. 
We also discuss a general pattern, the T-structure, in Section \ref{subsec-T}.

\subsection{Snake formation}
\label{subsec-s}

\begin{definition}
We say that an $n \times n$ strictly upper triangular matrix $M$ has the {\it $r(k,J)$-formation} (or simply an {\it $r$-formation}) if $M$ satisfies the following conditions. 
\begin{align*}
\left\{
\begin{array}{ll}
M(k,j)=2 \text{ when } k+1 \leq j \leq J. \\
M(i,j)=0 \text{ otherwise. }
\end{array}
\right.
\end{align*}
We say that an $n \times n$ strictly upper triangular matrix $M$ has the {\it $c(I,l)$-formation} (or a {\it $c$-formation}) if $M$ satisfies the following conditions. 
\begin{align*}
\left\{
\begin{array}{ll}
M(i,l)=2 \text{ when } I \leq i \leq l-1. \\
M(i,j)=0 \text{ otherwise. } 
\end{array}
\right.
\end{align*}
\label{def-r}
\end{definition}

\noindent (See Figure \ref{fig-formations}.)
\begin{figure}[ht]
\centering
\includegraphics[width=7.5cm]{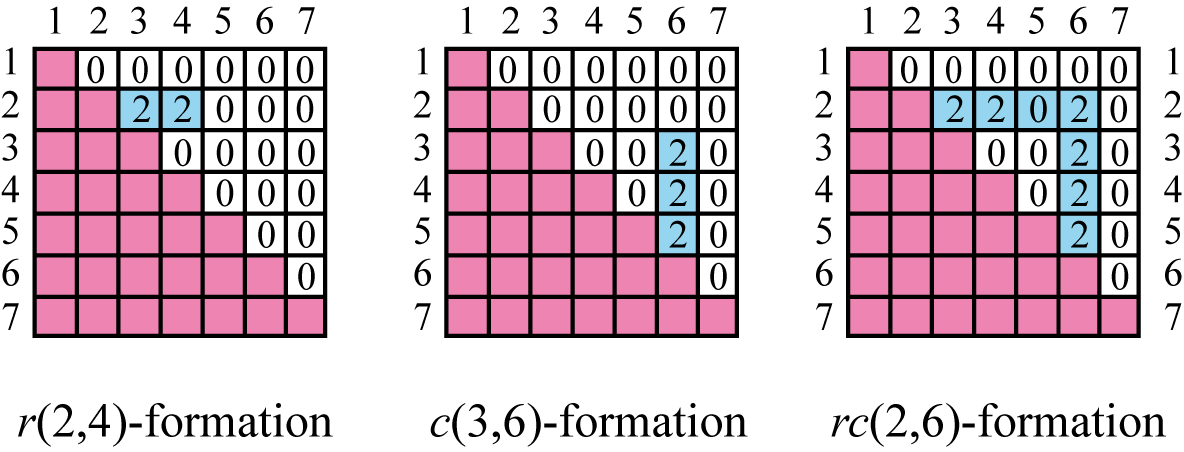}
\caption{The $r(2,4)$-, $c(3,6)$- and an $rc(2,6)$-formations. }
\label{fig-formations}
\end{figure}

\begin{proposition}
Any strictly upper triangular matrix of $r$- or $c$-formation is a CN-realizable matrix. 
\label{prop-r-form}
\end{proposition}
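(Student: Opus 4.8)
The plan is to read the B-ladder diagrams of the $r$- and $c$-formations directly off the matrices and to recognize them as exactly the left-hand sides of the two identities in Proposition \ref{prop-BtoW}; together with Proposition \ref{prop-W} this yields CN-realizability at once.

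First take a matrix $M$ in $r(k,J)$-formation. If $J\le k$ the matrix is the zero matrix, realized by the trivial $n$-braid projection, so assume $k+1\le J$. The nonzero entries are precisely $M(k,k+1)=M(k,k+2)=\cdots=M(k,J)=2$, so a B-ladder diagram of $M$ consists of the black edges $B^k_{k+1},B^k_{k+2},\ldots,B^k_J$, and since any ordering of this collection is a B-ladder diagram of $M$ (different orderings being related by move (L2)), I may use the word $B^k_{k+1}B^k_{k+2}\cdots B^k_J$. By the first identity of Proposition \ref{prop-BtoW} with $l=J$, this word is equivalent up to ladder moves to
\[ W^k_{k+1}W^{k+1}_{k+2}\cdots W^{J-1}_{J}W^{J-1}_{J}W^{J-2}_{J-1}\cdots W^{k+1}_{k+2}W^{k}_{k+1}, \]
which is a W-ladder diagram, so Proposition \ref{prop-W} shows $M$ is CN-realizable.

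For a matrix $M$ in $c(I,l)$-formation the argument is the mirror image: the nonzero entries are $M(I,l)=M(I+1,l)=\cdots=M(l-1,l)=2$, a B-ladder diagram of $M$ can be taken to be $B^{l-1}_{l}B^{l-2}_{l}\cdots B^{I}_{l}$, and the second identity of Proposition \ref{prop-BtoW} with $k=I$ converts it into a W-ladder diagram, so Proposition \ref{prop-W} applies again. (Alternatively, the reverse of the $r(k,J)$-formation is the $c(n+1-J,n+1-k)$-formation, so the $c$ case also follows from the $r$ case by Proposition \ref{prop-reverse}.)

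I do not anticipate a real obstacle here: Proposition \ref{prop-BtoW} has already done the combinatorial work of turning a ``fan'' of black edges sharing a vertex into a palindromic word of white edges. The only things to verify carefully are that the multiset of black edges of the B-ladder diagram of an $r(k,J)$- (resp.\ $c(I,l)$-) formation matches the left side of the relevant identity, that the listing order of these edges is immaterial (this is exactly move (L2)), and the trivial degenerate cases ($J\le k$, giving the zero matrix, and $J=k+1$, where the lone edge $B^k_{k+1}$ becomes $W^k_{k+1}W^k_{k+1}$ by (L1)).
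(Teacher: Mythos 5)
Your proof is correct and is essentially the paper's own argument: read the $r$- and $c$-formations off as the fan-shaped B-ladder diagrams, convert them to W-ladder diagrams via the two identities of Proposition \ref{prop-BtoW}, and conclude by Proposition \ref{prop-W}. Your extra care about edge ordering (via L2) and the degenerate cases only adds detail the paper leaves implicit.
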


\begin{proof}
For the B-ladder diagram
$$D=B^k_{k+1} B^k_{k+2} \dots B^k_{J-1} B^k_{J}$$
of a matrix with an $r$-formation, we obtain the W-ladder diagram
\begin{align*}
W^{k}_{k+1} W^{k+1}_{k+2} \dots W^{J-2}_{J-1} W^{J-1}_{J} W^{J-1}_{J} W^{J-2}_{J-1} \dots W^{k+1}_{k+2} W^{k}_{k+1}
\end{align*}
by the ladder moves of Proposition \ref{prop-BtoW}. 
For the B-ladder diagram
$$B^{I}_l B^{I+1}_l \dots B^{l-2}_l B^{l-1}_l$$
of a matrix with a $c$-formation, we obtain the W-ladder diagram
\begin{align*}
W^{l-1}_{l} W^{l-2}_{l-1} \dots W^{I+1}_{I+2} W^{I}_{I+1} W^{I}_{I+1} W^{I+1}_{I+2} \dots W^{l-2}_{l-1} W^{l-1}_{l}
\end{align*}
by the ladder moves of Proposition \ref{prop-BtoW}. 
\end{proof}

\begin{definition}
Let $M$ be an $n \times n$ strictly upper triangular matrix which is the sum of $n \times n$ matrices $M_1 +M_2 + M_3$, where $M_1$ is the matrix with the $r(k,J)$-formation, $M_2$ is the matrix with the $c(I,l)$-formation, and $M_3$ is the matrix such that $M_3(k,l)=2$ and the other entries are zero. 
When $J<l$, $I>k$ and $I-J \leq 1$, we say that $M$ has an {\it $rc(k,l)$-formation} (or simply an {\it $rc$-formation}).
(See Figure \ref{fig-formations}.) 
\label{def-rc}
\end{definition}

\begin{proposition}
Any strictly upper triangular matrix $M$ of an $rc$-formation is a CN-realizable matrix. 
\label{prop-rc-form}
\end{proposition}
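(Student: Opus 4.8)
We may assume $M_1\neq 0$ and $M_2\neq 0$, i.e.\ $k<J<l$ and $k<I<l$: if $M_1=0$ then the conditions $I>k$ and $I-J\le 1$ force $I=k+1$ and $M$ is the $c(k,l)$-formation, while if $M_2=0$ they force $J=l-1$ and $M$ is the $r(k,l)$-formation, and both cases are handled by Proposition~\ref{prop-r-form}. Recall also that $I-J\le 1$. The plan is to write down an explicit W-ladder diagram $D$, read it as a braid projection, and check directly that $N(D)=M$ by counting crossings; this proves CN-realizability, and it also fits the framework of Proposition~\ref{prop-W} once one observes that a B-ladder diagram of $M$ can be reduced to $D$.

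For $a<b$ abbreviate the two ``sweeps'' $\overrightarrow{W}_{a}^{\,b}:=W^{a}_{a+1}W^{a+1}_{a+2}\cdots W^{b-1}_{b}$ and $\overleftarrow{W}_{a}^{\,b}:=W^{b-1}_{b}W^{b-2}_{b-1}\cdots W^{a}_{a+1}$ (both the empty word when $a\ge b$), and set
\[
D\;=\;\overrightarrow{W}_{k}^{\,J}\ \overleftarrow{W}_{I-1}^{\,l}\ \overrightarrow{W}_{I-1}^{\,l}\ \overleftarrow{W}_{k}^{\,J}.
\]
Reading $D$ from top to bottom as a braid projection, it performs four consecutive finger moves: (1) the strand starting at position $k$ sweeps rightward to position $J$, crossing each of $s_{k+1},\dots,s_{J}$ once and coming to rest at position $J$; (2) the strand starting at position $l$ sweeps leftward to position $I-1$, and since $I-1\le J$ its path runs through position $J$, so it crosses $s_{k}$ once together with each of $s_{I},\dots,s_{l-1}$ once; (3) it sweeps back to position $l$, repeating those crossings; (4) $s_{k}$ sweeps back to position $k$, repeating its crossings. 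In each sweep exactly one strand moves, so no two of $s_{k+1},\dots,s_{l-1}$ ever cross; tallying the crossings, $s_{k}$ meets each of $s_{k+1},\dots,s_{J}$ twice, $s_{k}$ meets $s_{l}$ twice, $s_{l}$ meets each of $s_{I},\dots,s_{l-1}$ twice, and no other pair crosses. Hence $N(D)=M_{1}+M_{3}+M_{2}=M$. (Necessarily $D$ is a pure braid projection, since each strand returns to its starting position and $M$ is even.)

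To phrase the argument in the language of Proposition~\ref{prop-W}, one starts from the B-ladder diagram $B^{k}_{k+1}\cdots B^{k}_{J}\cdot B^{k}_{l}\cdot B^{l-1}_{l}B^{l-2}_{l}\cdots B^{I}_{l}$ of $M$ (the order of the black edges being free by (L2)), expands the block $B^{k}_{k+1}\cdots B^{k}_{J}$ by Proposition~\ref{prop-BtoW} into the outer sweeps $\overrightarrow{W}_{k}^{\,J}$ and $\overleftarrow{W}_{k}^{\,J}$, expands $B^{l-1}_{l}\cdots B^{I}_{l}$ by Proposition~\ref{prop-BtoW} into $\overleftarrow{W}_{I}^{\,l}\,\overrightarrow{W}_{I}^{\,l}$, and then weaves the remaining long-range edge $B^{k}_{l}$ through these white sweeps, using (L1) and (L6)--(L9), so as to extend them to $\overleftarrow{W}_{I-1}^{\,l}$ and $\overrightarrow{W}_{I-1}^{\,l}$ with the clasp $W^{J}_{J+1}W^{J}_{J+1}$ inserted between them. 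I expect the weaving of $B^{k}_{l}$ to be the main obstacle: this edge joins the widely separated strands $k$ and $l$, whereas the row-part reaches only $s_{k+1},\dots,s_{J}$ and the column-part only $s_{I},\dots,s_{l-1}$, so one must verify that it can be manoeuvred through the sweeps contributing exactly the two crossings that realize $M_{3}(k,l)=2$ and nothing more. The hypothesis $I-J\le 1$ is precisely what is needed here: it rules out any gap between the interval of strands swept over by $s_{k}$ and that swept over by $s_{l}$, so that after the first sweep $s_{k}$ sits at position $J$, adjacent to the region reached by strand $l$ --- exactly the situation that makes the clasp $W^{J}_{J+1}W^{J}_{J+1}$, equivalently the insertion of $B^{k}_{l}$, available. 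Everything else (the degenerate cases, the two applications of Proposition~\ref{prop-BtoW}, and the commutation moves (L2), (L6)--(L9)) is routine; alternatively one may bypass Proposition~\ref{prop-W} altogether, $N(D)=M$ having already been verified directly.
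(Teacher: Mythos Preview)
Your direct construction is correct: the word $D=\overrightarrow{W}_{k}^{\,J}\,\overleftarrow{W}_{I-1}^{\,l}\,\overrightarrow{W}_{I-1}^{\,l}\,\overleftarrow{W}_{k}^{\,J}$ is a pure braid projection with $N(D)=M$ (the crossing count is accurate, and the degenerate cases reduce cleanly to Proposition~\ref{prop-r-form} as you say), so CN-realizability follows straight from the definition and the looser ladder-move sketch in your final paragraph is not needed. This is a genuinely different route from the paper's. The paper stays entirely inside the BW-ladder formalism: it expands the row-block and the column-block into white sweeps via Proposition~\ref{prop-BtoW}, leaving the single black edge $B^k_l$ sandwiched between them, and then shortens $B^k_l$ one unit at a time using (L7), (L3), (L8), and finally (L1); the hypothesis $I-J\le1$ appears there as the inequality $(J-k)+(l-I)\ge l-k-1$, guaranteeing enough adjacent white edges on the two sides to absorb all $l-k-1$ units of length of $B^k_l$. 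Your argument is more elementary and makes the geometry transparent (two nested hooks, the inner one reaching far enough left to clasp the outer strand). The paper's approach, however, is the one that scales: the same ``shorten a long black edge using the surrounding white sweeps'' manoeuvre is reused essentially verbatim in the proofs of the $\alpha_1$-$\alpha_2$, $c\#r$, hang-glider, and loupe formations, so the ladder-move argument here functions as a template for everything that follows.
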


\begin{proof}
Let $M$ be an $n \times n$ matrix of an $rc(k,l)$-formation. 
Take the B-ladder diagram $D$ of $M$ so that $B^a_b$ is above $B^c_d$ if $a<c$ or $b<d$ for each pair of black edges $B^a_b$ and $B^c_d$ as shown in Figure \ref{fig-BW}, namely,  
$$D=B^k_{k+1} B^k_{k+2} \dots B^k_{J-1} B^k_{J} B^k_l B^{I}_l B^{I+1}_l \dots B^{l-2}_l B^{l-1}_l.$$
Apply ladder moves of Proposition \ref{prop-r-form} for the sequences of black edges before and after $B^k_l$ (the transformation $A$ in Figure \ref{fig-BW}). 
Then we obtain a BW-ladder diagram 
\begin{align*}
W^{k}_{k+1} W^{k+1}_{k+2} \dots W^{J-2}_{J-1} W^{J-1}_{J} W^{J-1}_{J} W^{J-2}_{J-1} \dots W^{k+1}_{k+2} W^{k}_{k+1} B^k_l & \\
W^{l-1}_{l} W^{l-2}_{l-1} \dots W^{I+1}_{I+2} W^{I}_{I+1} W^{I}_{I+1} W^{I+1}_{I+2} \dots W^{l-2}_{l-1} W^{l-1}_{l}. &
\end{align*}
We note that there are $J-k$ (resp. $l-I$) pairs of white edges before (resp. after) $B^k_l$ and $(J-k)+(l-I) \geq l-k-1$ because of the condition $I-J \leq 1$. 
Take $n, ~m \in \mathbb{N}$ so that $n+m=k-l-1$, $n \leq J-k$, and $m \leq l-I$. 
(In Figure \ref{fig-BW}, we take $n=m=2$.) 
Apply the ladder move L7 for the black edge $B^k_l$ with the $n$ white edges $W^k_{k+1}, W^{k+1}_{k+2}, \dots $ which are just above $B^k_l$. 
Then, apply L3, and apply L8 for the (shortened) black edge with the $m$ white edegs $W^{l-1}_l , W^{l-2}_{l-1}, \dots $ which are below the black edge. 
The black edge becomes length one, and by applying L1, we obtain a W-ladder diagram. 
\end{proof}

\begin{figure}[ht]
\centering
\includegraphics[width=12cm]{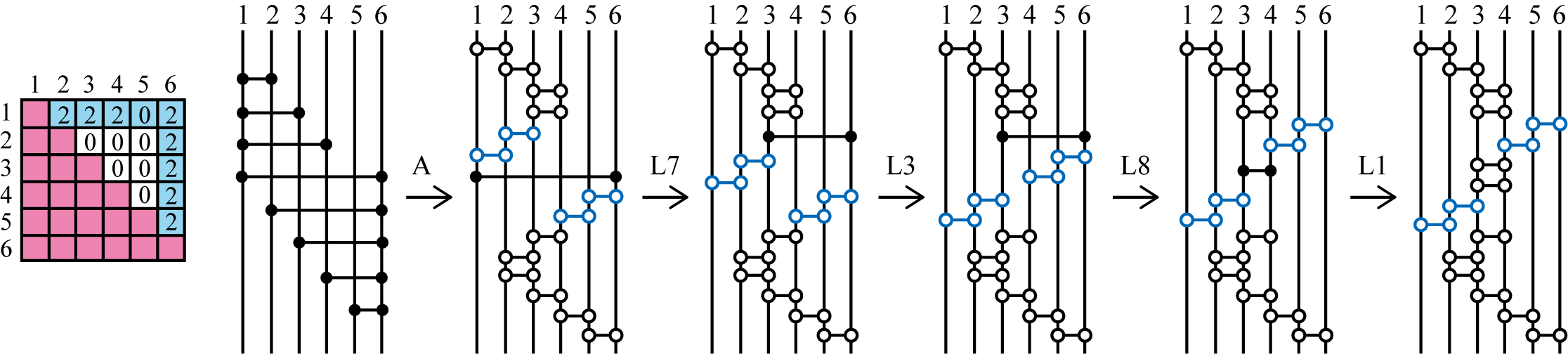}
\caption{A matrix of an $rc(1,6)$-formation is CN-realizable.}
\label{fig-BW}
\end{figure}

\begin{definition}
Let $( \alpha_1, \alpha_2 ) =(c,r),~(c, rc),~(rc, r)$ or $(rc, rc)$. 
An $n \times n$ strictly upper triangular matrix $M$ is said to have an $\alpha_1 (k,l)$-$\alpha_2 (l-1, m)$-formation (or simply $\alpha_1$-$\alpha_2$-formation) if $M=M_1+M_2-M_3$, where $M_1$ is a matrix of the $\alpha_1 (k,l)$-formation, $M_2$ is a matrix of the $\alpha_2(l-1, m)$-formation, and $M_3$ is the matrix such that $M_3(l-1, l)=2$ and the other entries are 0. 
(See Figure \ref{fig-cc-ex}.)
\label{def-c-r}
\end{definition}

\begin{figure}[ht]
\centering
\includegraphics[width=10cm]{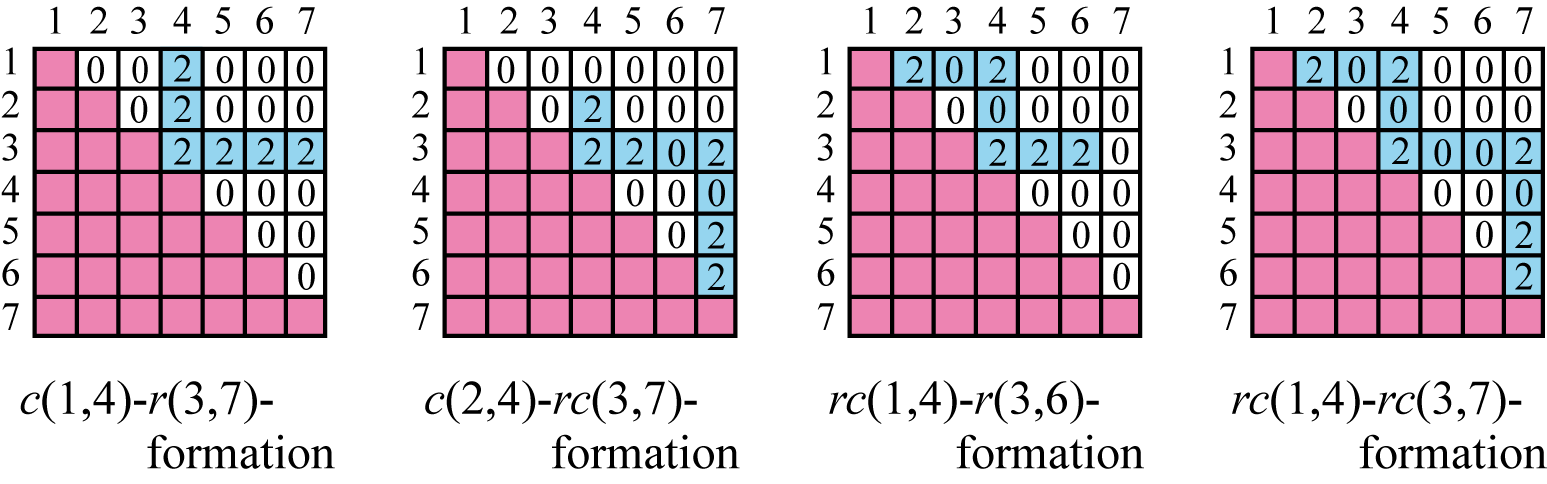}
\caption{$c$-$r$-, $c$-$rc$-, $rc$-$r$- and $rc$-$rc$-formations.}
\label{fig-cc-ex}
\end{figure}

\begin{proposition}
Any strictly upper triangular matrix of an $\alpha_1$-$\alpha_2$-formation is a CN-realizable matrix, where $( \alpha_1, \alpha_2 ) =(c,r),~(c, rc),~(rc, r)$ or $(rc, rc)$. 
\label{prop-alpha}
\end{proposition}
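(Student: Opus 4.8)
The plan is to reduce each of the four cases to the already-proved building blocks (Propositions \ref{prop-r-form} and \ref{prop-rc-form}) by manipulating an appropriately ordered B-ladder diagram, exactly mimicking the proof of Proposition \ref{prop-rc-form}. First I would write $M=M_1+M_2-M_3$ as in Definition \ref{def-c-r}, so that the black edges coming from $M_1$ live on strands $k,\dots,l$ and the black edges coming from $M_2$ live on strands $l-1,\dots,m$; the subtraction of $M_3$ just says the edge $B^{l-1}_{l}$ is not repeated. I would order the B-ladder diagram $D$ so that all black edges of $M_1$ come first (ordered as in Proposition \ref{prop-rc-form} or \ref{prop-r-form}, according to whether $\alpha_1$ is $rc$ or $c$) followed by all black edges of $M_2$. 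Because the two blocks of strands overlap only in the single segment $l-1$ (together with the segment $l$, which appears only in $M_1$), the interaction between the two groups is confined to a narrow band, and that is what makes the argument go through.

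The key steps, in order: (1) Apply Proposition \ref{prop-r-form} (if $\alpha_1=c$) or the argument of Proposition \ref{prop-rc-form} (if $\alpha_1=rc$) to the $M_1$-block alone, turning it into a W-ladder diagram on strands $k,\dots,l$; symmetrically convert the $M_2$-block into a W-ladder diagram on strands $l-1,\dots,m$. After this step $D$ is a concatenation of two W-ladder diagrams sharing the strand $l-1$. (2) The only obstruction to declaring victory is that the shared strand $l-1$ now carries white edges from both sides meeting at a vertex of the form $W^{l-2}_{l-1}$ (from the first block, since $\alpha_1$'s conversion ends with the ``fan'' $\dots W^{l-2}_{l-1}$) and $W^{l-1}_{l}$ (the leading edge of the second block). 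These commute or combine cleanly: $W^{l-2}_{l-1}$ and $W^{l-1}_{l}$ share the vertex $l-1$, so one uses L4 (the braid relation) to slide them past each other, and any doubled adjacent white edge $W^{l-1}_{l}W^{l-1}_{l}$ produced at the seam is removed by L1 — but note that since $M_3$ was subtracted, the edge $B^{l-1}_{l}$ appears exactly once in $D$, so after the conversions there is exactly the right parity of $W^{l-1}_{l}$'s and nothing is left over. Hence the whole diagram is a W-ladder diagram and Proposition \ref{prop-W} applies.

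Alternatively, and perhaps more cleanly, one can avoid re-deriving the seam analysis by observing that an $\alpha_1(k,l)$-$\alpha_2(l-1,m)$-formation $M$ can be written as a sum of two matrices each of which lives in a block of size at most $6$ — no: rather, one notes $M = M'_1 + M'_2$ where $M'_1$ has the $\alpha_1(k,l)$-formation supported on rows/columns $k,\dots,l$ and $M'_2$ is obtained from the $\alpha_2(l-1,m)$-formation by deleting its $(l-1,l)$-entry, which leaves a strictly upper triangular T0 $(0,2)$-matrix supported on strands $l-1,\dots,m$ that still has $r$- or $rc$-formation type (an $rc$- or $r$-formation with one ``corner'' entry removed is again of a type covered by Propositions \ref{prop-r-form}, \ref{prop-rc-form}, or \ref{prop-alpha} applied to a smaller index set, or simply of $r$-formation). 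Then Proposition \ref{prop-02-sum} (sum of CN-realizable $(0,2)$-matrices is CN-realizable by a pure braid projection) finishes the proof immediately. I would present whichever of these is shorter; the first is self-contained given the figures, the second leans on \ref{prop-02-sum} and a short case check that the deleted-corner piece is still of a handled formation.

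The main obstacle I anticipate is bookkeeping at the seam: making sure the white edges produced on strand $l-1$ from the two sides genuinely commute or cancel under L1–L9, and in particular checking that the condition built into Definition \ref{def-c-r} (namely that $M_1$ and $M_2$ overlap only in the single off-diagonal entry $(l-1,l)$, which is why $M_3$ is subtracted) is exactly what guarantees the seam is a single ``braid-relation'' interaction rather than an unbounded tangle. Once that local picture is pinned down — most cleanly via the ladder-move identities of Proposition \ref{prop-BtoW} and moves L1, L3, L4 — the four cases $(c,r),(c,rc),(rc,r),(rc,rc)$ are uniform, differing only in whether the outer block is converted via Proposition \ref{prop-r-form} or via the argument of Proposition \ref{prop-rc-form}.
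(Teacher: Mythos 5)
Both of your routes have a problem, and the second one fails outright. Deleting the $(l-1,l)$ entry from the $\alpha_2(l-1,m)$-formation does \emph{not} leave a matrix of $r$- or $rc$-type, nor even a T0 matrix: for instance with $\alpha_2=r$, the resulting $M'_2$ has $M'_2(l-1,l)=M'_2(l,l+1)=0$ but $M'_2(l-1,l+1)=2$, so by Proposition \ref{prop-T0} it is not CN-realizable and Proposition \ref{prop-02-sum} cannot be invoked. The same obstruction appears if you instead give the $(l-1,l)$ entry to the lower block and delete it from the upper one ($M'_1(l-2,l-1)=M'_1(l-1,l)=0$ but $M'_1(l-2,l)=2$), so no $(0,2)$-sum decomposition along the seam can work; this proposition genuinely cannot be reduced to Proposition \ref{prop-02-sum}.

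Your first route has the right skeleton --- it is essentially the paper's argument --- but it misplaces the difficulty. Step (1) as written is impossible: the single black edge $B^{l-1}_l$ can be assigned to only one of the two blocks, and the other block then starts with a black edge of length at least $2$, on which neither L1 nor Proposition \ref{prop-BtoW} can get started; so you cannot convert both blocks into W-ladder diagrams independently. (If you could, the concatenation would already be a W-ladder diagram and there would be no seam to analyze at all.) Your step (2) then addresses a non-issue: white edges need not commute or cancel for the result to be a W-ladder diagram, L4 is a three-edge relation that does not let two adjacent white edges ``slide past each other,'' and L1 applied to $W^{l-1}_lW^{l-1}_l$ produces a \emph{black} edge, which is the wrong direction. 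The actual resolution, which is the one point your proposal is missing, is this: assign $B^{l-1}_l$ to the upper ($\alpha_1$) block, convert that block fully so that its output ends with a trailing white edge $W^{l-1}_l$, and then use that trailing $W^{l-1}_l$ as the seed for the lower block's conversion in place of its missing adjacent black edge --- i.e., run the argument of Proposition \ref{prop-r-form}/\ref{prop-rc-form} on the lower part \emph{including} the inherited $W^{l-1}_l$. With that substitution made explicit, the four cases $(c,r)$, $(c,rc)$, $(rc,r)$, $(rc,rc)$ do go through uniformly as you say.
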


\begin{proof}
Let $M$ be a matrix of the $c(k,l)$-$rc(l-1,m)$-formation. 
Take a B-ladder diagram $D$ of $M$ as 
$$D=B^k_l B^{k+1}_l B^{k+2}_l \dots B^{l-1}_l B^{l-1}_{l+1} B^{l-1}_{l+2} \dots B^{l-1}_{J-1} B^{l-1}_J B^{l-1}_m B^{I}_m B^{I+1}_m \dots B^{m-2}_m B^{m-1}_m.$$
For the upper part from $B^k_l$ to $B^{l-1}_l$, apply the ladder moves of the proof of Proposition \ref{prop-r-form} (the transformation $A$ in Figure \ref{fig-cc-pf}) to obtain 
$$W^{l-1}_l W^{l-2}_{l-1} W^{l-3}_{l-2} \dots W^k_{k+1} W^k_{k+1} \dots W^{l-3}_{l-2} W^{l-2}_{l-1} W^{l-1}_l.$$
Then, for the lower part, including the lower white edge $W^{l-1}_{l}$, apply the ladder moves in the same manner to the proof of Proposition \ref{prop-rc-form} (the transformations $B$ in Figure \ref{fig-cc-pf}) to obtain white edges. 
Thus, we obtain a W-ladder diagram. 
For the other cases, $c$-$r$-, $rc$-$r$-, $rc$-$rc$-formations, W-ladder diagrams can be obtained in the same way. 
\end{proof}

\begin{figure}[ht]
\centering
\includegraphics[width=12cm]{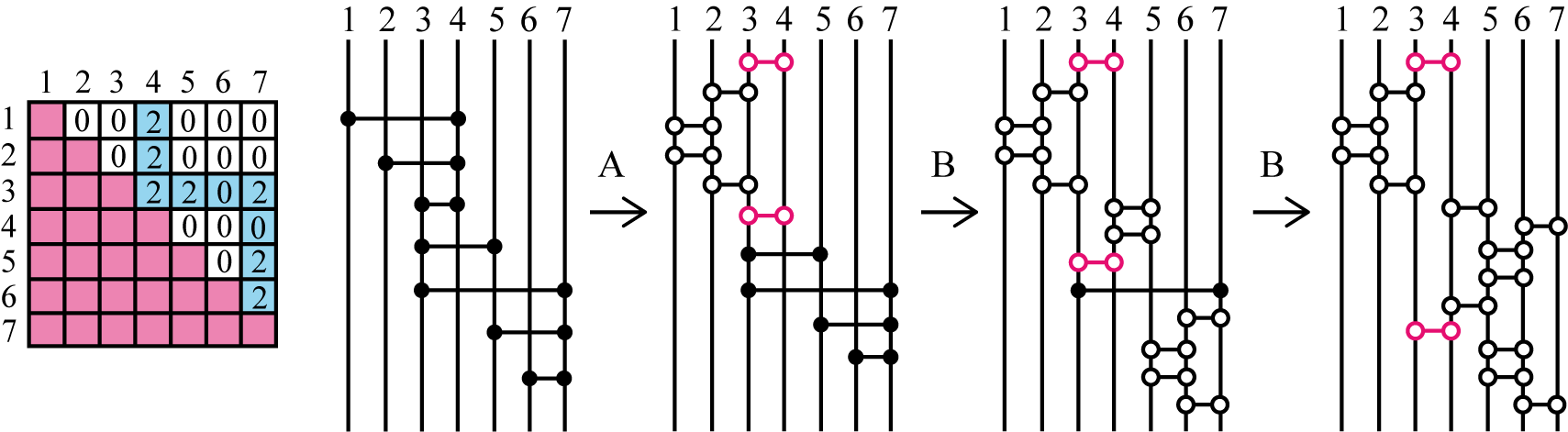}
\caption{A matrix of a $c(1,4)$-$rc(3,7)$-formation is a CN-realizable matrix. }
\label{fig-cc-pf}
\end{figure}

\begin{proposition}
Let $M_1$ (resp. $M_2$) be an $n \times n$ matrix such that $M_1(l-2, l+1)=2$ (resp. $M_2(l-2, l+2)=2$) and the other entries are zero.  
\begin{itemize}
\item[(1)] Let $M$ be an $n \times n$ matrix of a $c(k,l)$-$r(l-1,m)$-formation with $l-k \geq 2$, $m-l \geq 1$. The matrix $M+M_1$ is also a CN-realizable matrix. 
\item[(2)] Let $M$ be an $n \times n$ matrix of a $c(k,l)$-$r(l-1,m)$-formation with $l-k \geq 2$, $m-l \geq 2$. The matrix $M+M_1+M_2$ is also a CN-realizable matrix. 
\end{itemize}
\label{prop-sharp}
\end{proposition}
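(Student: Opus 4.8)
\medskip
\noindent\textbf{Proof plan.} In both parts the strategy is the one used for Propositions~\ref{prop-rc-form} and~\ref{prop-alpha}: we invoke Proposition~\ref{prop-W} by exhibiting a B-ladder diagram of the matrix and transforming it into a W-ladder diagram through ladder moves. Since black edges commute (L2), the order of the black edges in the B-ladder diagram may be chosen freely.

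Consider part~(1) and set $P=M+M_1$. By Definitions~\ref{def-r} and~\ref{def-c-r}, $P$ carries a $2$ exactly along the vertical arm $(i,l)$ with $k\le i\le l-1$, along the horizontal arm $(l-1,j)$ with $l\le j\le m$, and at the single further entry $(l-2,l+1)$; so $P$ is the $c(k,l)$-$r(l-1,m)$-formation $M$ with the one black edge $B^{l-2}_{l+1}$ adjoined, and the four entries $(l-2,l),(l-2,l+1),(l-1,l),(l-1,l+1)$ form a $2\times2$ block of $2$'s at the corner of $M$. Take the B-ladder diagram
$$D=B^k_l\,B^{k+1}_l\cdots B^{l-1}_l\;B^{l-1}_{l+1}\;B^{l-2}_{l+1}\;B^{l-1}_{l+2}\cdots B^{l-1}_m,$$
and first convert the column block $B^k_l\cdots B^{l-1}_l$ into white edges by Proposition~\ref{prop-BtoW}; since $l-k\ge2$, the resulting white-edge sequence ends with $\cdots W^{l-2}_{l-1}\,W^{l-1}_l$. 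Processing the next edge $B^{l-1}_{l+1}$ by one L7 and then L1 (this uses $m-l\ge1$, so that $B^{l-1}_{l+1}$ is present) produces a pair of edges $W^l_{l+1}$ just to the left of $B^{l-2}_{l+1}$. Now $B^{l-2}_{l+1}$ has span three, and the three white edges $W^{l-2}_{l-1}$, $W^{l-1}_l$, $W^l_{l+1}$ lie next to it; a short sequence of slides (L3, L5) together with one L9 (shrinking its right endpoint from $l+1$ to $l$) and one L7 (shrinking its left endpoint from $l-2$ to $l-1$) reduces $B^{l-2}_{l+1}$ to length one, and L1 turns it into white edges. What survives of $D$ is then a collection of white edges followed by the tail $B^{l-1}_{l+2}\cdots B^{l-1}_m$ of the horizontal arm, which is converted to white edges exactly as in the proofs of Propositions~\ref{prop-r-form} and~\ref{prop-alpha}. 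The result is a W-ladder diagram, so $P$ is CN-realizable by Proposition~\ref{prop-W}.

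Part~(2) follows the same scheme with $Q=M+M_1+M_2$, which is $P$ together with the extra black edge $B^{l-2}_{l+2}$ of span four: one now also processes the edge $B^{l-1}_{l+2}$ first (present because $m-l\ge2$), obtaining a pair of edges $W^{l+1}_{l+2}$, and then collapses $B^{l-2}_{l+2}$ by shrinking its left endpoint twice (via $W^{l-2}_{l-1}$ and $W^{l-1}_l$, coming from the column block) and its right endpoint once (via $W^{l+1}_{l+2}$), finishing with L1; afterwards $B^{l-2}_{l+1}$ and the tail are handled as in part~(1).

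The content of both arguments is purely the bookkeeping of ladder moves: one must verify that, as the BW-ladder diagram evolves, the white edge demanded by each move L3--L9 is actually present, with the correct pair of endpoints, and can be slid to where it is needed. This is precisely where the hypotheses $l-k\ge2$ and $m-l\ge1$ (respectively $m-l\ge2$) are used---they guarantee the availability of $W^{l-2}_{l-1}$ and $W^l_{l+1}$ (respectively also $W^{l+1}_{l+2}$)---and I would present it via a figure tracking the transformation, in the style of Figures~\ref{fig-BW} and~\ref{fig-cc-pf}. I expect no conceptual difficulty beyond this tracking.
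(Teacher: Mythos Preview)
Your approach is the paper's approach: pick a convenient B-ladder ordering, convert the $c(k,l)$-column to white edges via Proposition~\ref{prop-BtoW}, feed the resulting $W^{l-1}_l$ into the $r(l-1,m)$-row, and then use the nearby white edges $W^{l-2}_{l-1}$, $W^l_{l+1}$ (and, in part~(2), $W^{l+1}_{l+2}$) to shrink the extra black edges $B^{l-2}_{l+1}$, $B^{l-2}_{l+2}$ down to length one. The only real difference is ordering: the paper places the extra edges \emph{before} the entire row block, slides $W^{l-1}_l$ past them with L5, converts the whole row at once (transformation~$B$ in Figure~\ref{fig-sh-pf}), and only then shortens the extras; you instead peel off $B^{l-1}_{l+1}$ (and $B^{l-1}_{l+2}$) first, deal with the extras in the middle, and leave a tail $B^{l-1}_{l+2}\cdots B^{l-1}_m$ for the end.

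Both orders work, but be aware that yours makes the bookkeeping in part~(2) a bit more delicate than you indicate. After you process $B^{l-1}_{l+1}$ and $B^{l-1}_{l+2}$, the single $W^{l-1}_l$ coming from the bottom of the column block has been pushed below a stack containing $W^l_{l+1}$'s, and $W^{l-2}_{l-1}$ and $W^{l-1}_l$ do not commute under L3; so ``shrinking the left endpoint of $B^{l-2}_{l+2}$ twice via $W^{l-2}_{l-1}$ and $W^{l-1}_l$'' requires some care about which copy of each white edge you use and in which order you interleave the L5/L3 slides with the L7/L9 shrinks. The paper sidesteps this by doing the entire row first, so that the symmetric white word $W^{l-1}_l W^l_{l+1}\cdots W^l_{l+1} W^{l-1}_l$ is already in place and the needed edges sit on the correct side of the extras. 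When you draw your figure you will sort this out, but the paper's ordering is the cleaner one to present.
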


\noindent We call such a formation of the matrix $M+M_1$ or $M+M_1+M_2$ in Proposition \ref{prop-sharp} a {\it $c(k,l)\# r(l-1,m)$-formation} (or simply a {\it $c \# r$-formation}). \\

\noindent {\it Proof of Proposition \ref{prop-sharp}}. \ 
Let $D=B^{k}_{l} B^{k+1}_{l} \dots B^{l-1}_{l} B^{l-2}_{l+1} (B^{l-2}_{l+2}) B^{l-1}_{l+1} B^{l-1}_{l+2} \dots B^{l-1}_{m}$. 
For the upper part $B^{k}_{l} B^{k+1}_{l} \dots B^{l-1}_{l}$, apply the ladder moves of the proof of Proposition \ref{prop-r-form} to obtain $W^{l-1}_{l} W^{l-2}_{l-1} \dots W^{l-2}_{l-1} W^{l-1}_{l}$ (the transformation $A$ in Figure \ref{fig-sh-pf}). 
Use the ladder move L5 to move down the lower $W^{l-1}_l$ below $B^{l-2}_{l+1}$ (and $B^{l-2}_{l+2}$). 
Then transform the lower part of the BW-ladder diagram in the same way to the proof of Proposition \ref{prop-alpha} (the transformation $B$ in Figure \ref{fig-sh-pf}). 
Use the white edges $W^{l-2}_{l-1}$ and $W^l_{l+1}$ to shorten $B^{l-2}_{l+1}$, and use $W^{l-2}_{l-1}$, $W^{l-1}_l$ and $W^{l+1}_{l+2}$ to shorten $B^{l-2}_{l+2}$ by the ladder moves as shown in Figure \ref{fig-sh-pf}. 
\qed \\

\begin{figure}[ht]
\centering
\includegraphics[width=12cm]{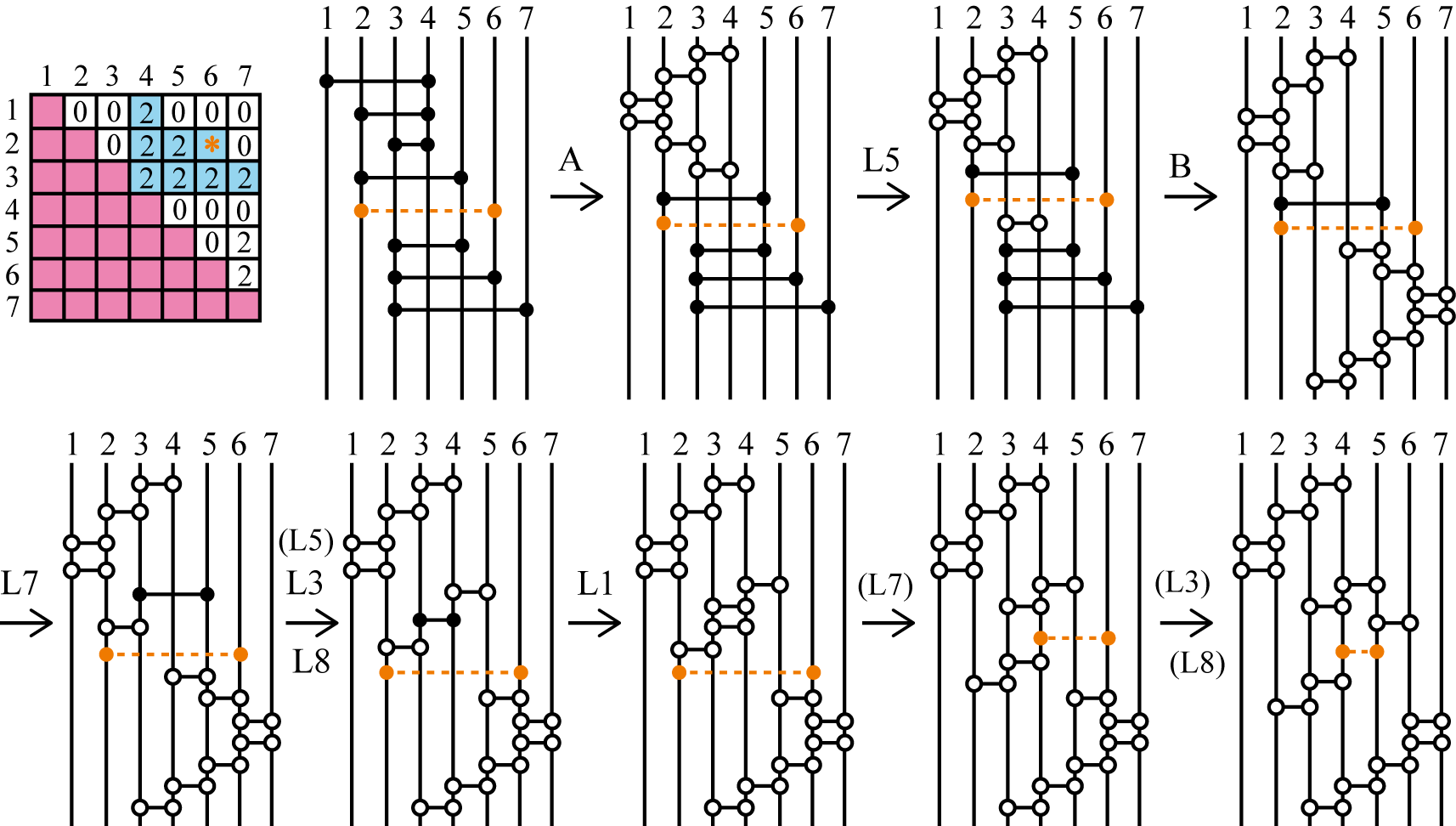}
\caption{A matrix of a $c(1,4) \# r(3,7)$-formation is a CN-realizable matrix. Ignore the orange-colored broken edges if $M(2,6)=0$.}
\label{fig-sh-pf}
\end{figure}

\noindent We call a formation of Definiton \ref{def-r}, \ref{def-rc}, or \ref{def-c-r} or Proposition \ref{prop-sharp} a {\it snake formation}.

\subsection{Hang-glider formation}
\label{subsec-h}

\begin{definition}
A strictly upper triangular matrix $M$ is said to have an {\it $H(k,l;m)$-formation} (or simply an {\it $H$-formation}) for $k<m<l$ if $M$ satisfies 
\begin{align*}
\left\{
\begin{array}{ll}
M(k, m+1)=M(k,l)=M(m,l)=M(m,m+1)=2. \\
M(k,j)=2 \text{ when } j \leq m-1. \\
M(i,l)=2 \text{ when } i \geq m+2. \\
M(k,m)=0 \text{ or } 2. \
M(m+1,l)=0 \text{ or } 2. \\
M(m-1, m+1)=0 \text{ or } 2. \
M(m, m+2)=0 \text{ or } 2. \\
M(i,j)=0 \text{ otherwise}. 
\end{array}
\right.
\end{align*}
\label{def-h}
\end{definition}

\begin{proposition}
Any strictly upper triangular matrix of an $H$-formation is a CN-realizable matrix. 
\label{prop-hang}
\end{proposition}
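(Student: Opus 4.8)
The plan is to prove Proposition \ref{prop-hang} in the same spirit as Propositions \ref{prop-r-form}, \ref{prop-rc-form}, \ref{prop-alpha}, and \ref{prop-sharp}: exhibit an explicit B-ladder diagram of a matrix with an $H(k,l;m)$-formation, arrange its black edges in a convenient order by L2, and then apply a prescribed sequence of ladder moves to turn it into a W-ladder diagram, invoking Proposition \ref{prop-W}. First I would fix the core block of black edges forced by the defining conditions, $B^k_{m+1}$, $B^k_l$, $B^m_l$, $B^m_{m+1}$, together with the ``fan'' edges $B^k_j$ for $j\le m-1$ and $B^i_l$ for $i\ge m+2$, and the optional edges $B^k_m$, $B^{m+1}_l$, $B^{m-1}_{m+1}$, $B^m_{m+2}$ when the corresponding entry is $2$. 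The picture to keep in mind is a hang-glider: two ``wings'' given by a $c$-type fan $B^k_{m}\cdots B^k_{m-1}$ meeting the short edges at column $m+1$, and an $r$-type fan $B^{m+2}_l\cdots B^{l-1}_l$ meeting the short edges at row $m$, with the two central rectangles $B^k_{m+1}$, $B^k_l$, $B^m_l$, $B^m_{m+1}$ acting as the frame that links the two halves.

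The key steps, in order, are as follows. First, order the black edges top-to-bottom so that the left fan $B^k_{k+1},\dots,B^k_{m-1}$ comes first, then $B^k_m$ (if present), $B^k_{m+1}$, $B^k_l$, then $B^{m-1}_{m+1}$, $B^m_{m+1}$, $B^m_{m+2}$ (those present), $B^m_l$, then the right fan $B^{m+2}_l,\dots,B^{l-1}_l$, using L2 freely since all these commute as black edges. Second, apply Proposition \ref{prop-BtoW} (equivalently the transformation in the proof of Proposition \ref{prop-r-form}) to the initial $r$-type run $B^k_{k+1}\cdots B^k_{m-1}B^k_m B^k_{m+1}$ to replace it by a palindrome of white edges $W^k_{k+1}\cdots W^m_{m+1}W^m_{m+1}\cdots W^k_{k+1}$, and similarly collapse the final $c$-type run $B^m_l B^{m+2}_l \cdots B^{l-1}_l$ (note the row index pattern) into a white palindrome $W^{l-1}_l\cdots W^{m}_{m+1}\cdots W^{l-1}_l$; here I may first need to slide $B^m_l$ past the small edges $B^m_{m+1},B^m_{m+2}$ and $B^{m-1}_{m+1}$ using L2 so the $c$-fan is contiguous. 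Third, deal with the remaining ``frame'' black edge $B^k_l$ and the short edges $B^m_{m+1}$, $B^{m-1}_{m+1}$, $B^m_{m+2}$: the short edges of length one or two get shortened to length one and killed by L1, using the newly created adjacent white edges $W^{m-1}_m$, $W^m_{m+1}$, $W^{m+1}_{m+2}$ via L5–L9 exactly as in the proof of Proposition \ref{prop-sharp}; and the long central edge $B^k_l$ is shortened by walking it down past the white palindrome on its top with L7 (decreasing its top index) and past the white palindrome on its bottom with L8 (decreasing its bottom index), until it reaches length one and is removed by L1. The counting that the white edges available on each side suffice to bring $B^k_l$ down to length one is the analogue of the ``$J-k$ pairs / $l-I$ pairs'' count in Proposition \ref{prop-rc-form}, and the hypotheses $k<m<l$ plus the forced $2$'s in row $k$ and column $l$ guarantee enough material. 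Finally, once every black edge is gone we have a W-ladder diagram, so Proposition \ref{prop-W} gives CN-realizability; I would close with the remark that the reverse case follows from Proposition \ref{prop-reverse} if needed, and a figure analogous to Figures \ref{fig-BW} and \ref{fig-sh-pf} illustrating a concrete small instance.

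The main obstacle I anticipate is the bookkeeping around the optional edges and the order of operations between the two halves: the four independent ``$0$ or $2$'' choices $M(k,m)$, $M(m+1,l)$, $M(m-1,m+1)$, $M(m,m+2)$ produce sixteen nominal cases, and one must check that shortening one short edge does not consume a white edge that a later step needs, and that sliding $B^k_l$ down does not get blocked by a not-yet-resolved short edge. I expect this to be handled uniformly by always resolving the short edges first (they only ever need the three local white edges $W^{m-1}_m, W^m_{m+1}, W^{m+1}_{m+2}$, which are created by the two palindromes regardless of the optional choices) and only afterwards walking $B^k_l$ through the now-clean white palindromes; the palindromic structure means there is a matching white edge on top for every L7 step and on the bottom for every L8 step, so no case distinction on the lengths is actually required beyond the trivial inequalities $m-k\ge 1$ and $l-m\ge 1$. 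The rest is a routine, if somewhat lengthy, ladder-move verification best presented with a single representative figure.
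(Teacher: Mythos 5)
Your proposal is correct and follows essentially the same route as the paper: the paper observes that $M$ with its $(k,l)$ entry set to $0$ is exactly an $rc(k,m+1)$-$rc(m,l)$-formation, applies Proposition \ref{prop-alpha} to turn every black edge except $B^k_l$ (kept at the top) into white edges, and then shortens $B^k_l$ through the two resulting white palindromes via L6, L7, L5, L3, L8 and kills it with L1 --- precisely your two-palindrome walk. The only points to tighten in your write-up are that when $M(k,m)=0$ (resp.\ $M(m+1,l)=0$) the run $B^k_{k+1}\cdots B^k_m B^k_{m+1}$ (resp.\ $B^m_l B^{m+2}_l\cdots B^{l-1}_l$) is not contiguous, so Proposition \ref{prop-BtoW} cannot be applied to it verbatim and the corner edge $B^k_{m+1}$ (resp.\ $B^m_l$) must be shortened from both sides as in Proposition \ref{prop-rc-form} --- which is exactly what the paper's appeal to the $rc$-$rc$-formation packages --- and that L7 \emph{increases} the top index of a black edge (that is what shortens it), rather than decreasing it.
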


\begin{proof}
Let $M$ be an $n \times n$ matrix of an $H(k,l;m)$-formation. 
Take a B-ladder diagram $D$ of $M$ as 
$$B^k_l \ B^k_{k+1}B^k_{k+2} \dots B^k_{m-1} (B^k_{m}) B^k_{m+1}\ (B^{m-1}_{m+1}) B^m_{m+1} (B^m_{m+2}) \ B^{m}_l (B^{m+1}_l) B^{m+2}_l \dots B^{l-2}_l B^{l-1}_l.$$
Since $M$ has the $rc(l,m+1)$-$rc(m,l)$-formation if the $(k,l)$ entry is replaced with 0, keeping $B^k_l$ at the top, $D$ is transformed into 
\begin{align*}
& B^k_k \ W^k_{k+1} \dots (W^{m-1}_m W^{m-1}_m) W^{m}_{m+1} W^{m-1}_{m} W^{m-1}_{m} \dots W^{k}_{k+1} (W^{m-1}_m W^{m-1}_m )\\
& (W^{m+1}_{m+2} W^{m+1}_{m+2}) W^{l-1}_{l} \dots W^{m+1}_{m+2} W^{m+1}_{m+2} W^{m}_{m+1} (W^{m+1}_{m+2} W^{m+1}_{m+2}) \dots W^{l-1}_{l}
\end{align*}
by the ladder moves of the proof of Proposition \ref{prop-alpha} (the transformations $A$ in Figure \ref{fig-rc-ex}). 
Apply ladder moves L6, (L7 if $M(k,m)=2$) and L5 for the black edge $B^k_l$ with the white edges $W^k_{k+1} \dots (W^{m-1}_{m} W^{m-1}_{m} ) W^{m}_{m+1} W^{m-1}_{m}$. 
Then $B^k_l$ is shortened to $B^m_l$. 
After some L3 (and L5 if $M(m,m+2)=2)$, apply L8 for $B^m_l$ with the white edges $W^{l-1}_{l} \dots W^{m+1}_{m+2}$ to obtain $B^m_{m+1}$. 
By L1, then, we obtain a W-ladder diagram. 
\end{proof}

\begin{figure}[ht]
\centering
\includegraphics[width=13cm]{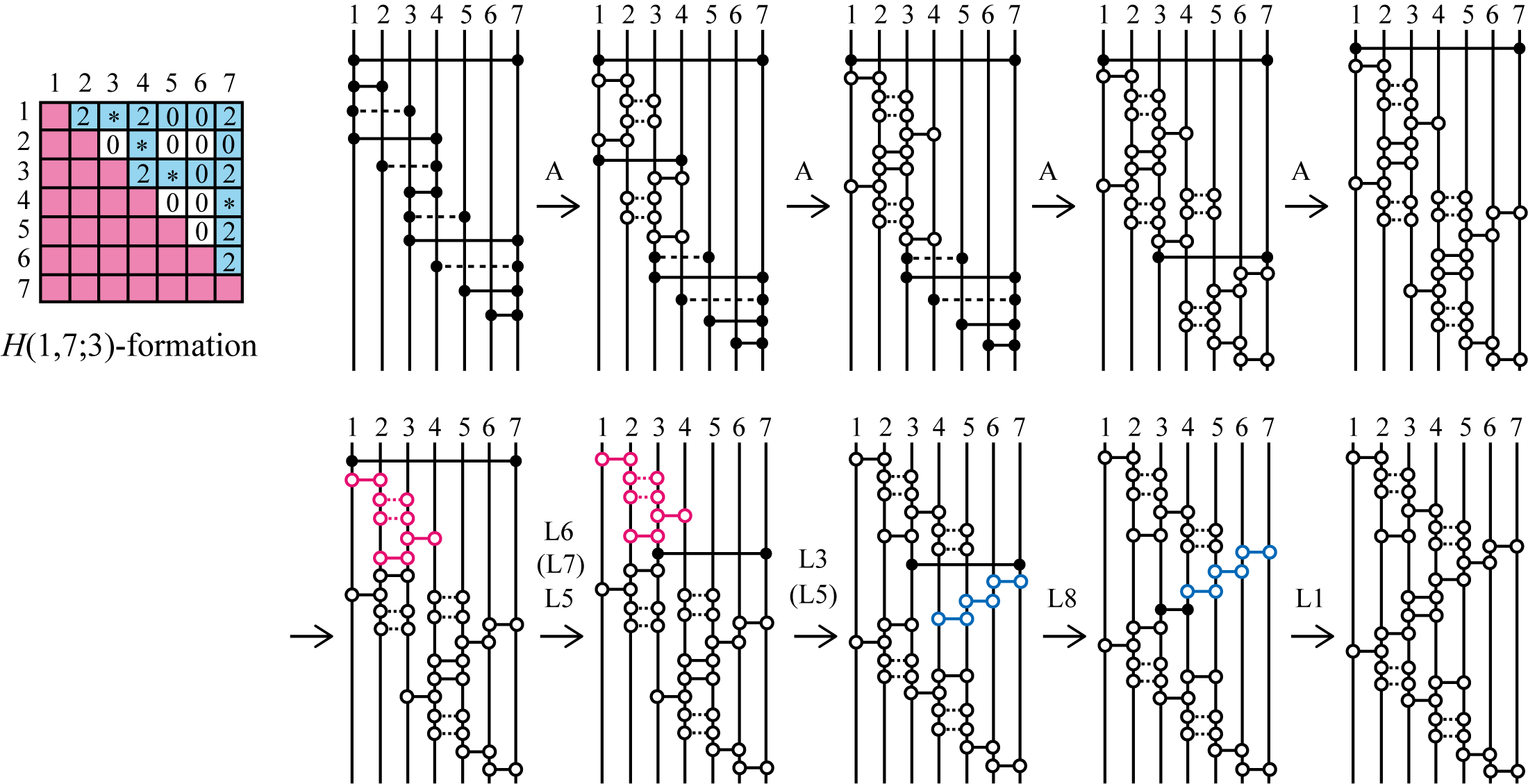}
\caption{A matrix of an $H(1,7;3)$-formation is a CN-realizable matrix. Ignore the broken edges if the corresponding entries are 0 in the matrix.}
\label{fig-rc-ex}
\end{figure}

\begin{corollary}
Let $M$ be an $n \times n$ strictly upper triangular matrix that has an $H(k,l;m)$-formation. 
Let $M_1$ (resp. $M_2$) be the $n \times n$ matrix such that $M_1(k-1, k+1)=2$ (resp. $M_2(l-1, l+1)=2$) and the other entries are 0. 
The matrices $M+M_1$, $M+M_2$, and $M+M_1+M_2$ are also CN-realizable. 
\label{cor-h}
\end{corollary}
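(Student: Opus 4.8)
The plan is to reuse the B-ladder diagram of the $H(k,l;m)$-formation built in the proof of Proposition~\ref{prop-hang}, attaching the extra length-two black edges $B^{k-1}_{k+1}$ (for $M+M_1$) and $B^{l-1}_{l+1}$ (for $M+M_2$) at the left and right corners of the hang-glider, in the same spirit as the proof of Proposition~\ref{prop-sharp}. Thus I take the B-ladder diagram $D$ of $M$ from that proof, prepend $B^{k-1}_{k+1}$ and append $B^{l-1}_{l+1}$, and first run the transformations $A$ of Proposition~\ref{prop-hang} on the remaining edges. By L2 the two corner edges are not disturbed, and after $A$ the left wing has produced a \emph{spare} white edge $W^k_{k+1}$ (the left wing's expansion both begins and ends with $W^k_{k+1}$, by Proposition~\ref{prop-BtoW}) and the right wing a spare $W^{l-1}_l$ sitting immediately in front of $B^{l-1}_{l+1}$, in addition to the copies of $W^k_{k+1}$ and $W^{l-1}_l$ that the keel reduction of Proposition~\ref{prop-hang} consumes.

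I then resolve the two corners. Slide $B^{k-1}_{k+1}$ past $B^k_l$ by L2 until it meets the spare $W^k_{k+1}$, apply L8 to shorten it to $B^{k-1}_k$, and L1 to replace it with $W^{k-1}_k W^{k-1}_k$; symmetrically apply L7 to the pair $W^{l-1}_l\,B^{l-1}_{l+1}$ to get $B^{l}_{l+1}$, then L1 to replace it with $W^{l}_{l+1} W^{l}_{l+1}$. Finally run the keel reduction of Proposition~\ref{prop-hang} verbatim: shorten $B^k_l$ through the left wing's white edges (L6, L7, L5) down to $B^m_l$, then through the right wing's first batch of white edges (some L3, then L8) down to $B^m_{m+1}$, and apply L1. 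The key observation is that this keel reduction stays inside the first halves of the two wing expansions, whereas the corner resolutions used the second halves; moreover the surplus edges $W^{k-1}_k$ occupy the strand pair $\{k-1,k\}$, which L5 lets commute past the travelling keel $B^{(\cdot)}_l$ as soon as the keel has moved off strand $k$, and the surplus edges $W^{l}_{l+1}$ sit beyond the rightmost point the keel ever reaches. Hence nothing obstructs the reduction, the result is a W-ladder diagram, and $M+M_1$, $M+M_2$, $M+M_1+M_2$ are CN-realizable by Proposition~\ref{prop-W}.

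The remaining points are degenerate ones, and I expect them (rather than anything conceptual) to be the only real work. For $M+M_1+M_2$ the two corner regions $\{k-1,k,k+1\}$ and $\{l-1,l,l+1\}$ must be disjoint, which fails only when $l=k+2$; in that case the non-zero part of $M+M_1+M_2$ lies on at most five consecutive strands, so CN-realizability follows from Theorem~\ref{thm-55} and Proposition~\ref{prop-small}. One should also observe that $M+M_1$ (resp. $M+M_2$) is T0, which amounts to $M(k,k+1)=2$ (resp. $M(l-1,l)=2$) since $M$ itself is T0; this is automatic when $m\geq k+2$ (resp. $m\leq l-2$), and the few small hang-gliders where it is at issue are again covered by Theorem~\ref{thm-55} and Proposition~\ref{prop-small}. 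Thus the main obstacle is purely bookkeeping: fixing the move order and tracking the optional entries of the $H$-formation, all of which interact only with the white edges $W^k_{k+1}$ and $W^{l-1}_l$; as in the proofs of Propositions~\ref{prop-rc-form}, \ref{prop-alpha}, \ref{prop-sharp} and~\ref{prop-hang}, this is best certified by a figure analogous to Figure~\ref{fig-rc-ex}.
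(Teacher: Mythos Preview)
Your approach is correct and essentially identical to the paper's: add $B^{k-1}_{k+1}$ at the top and $B^{l-1}_{l+1}$ at the bottom of the B-ladder diagram from Proposition~\ref{prop-hang}, and shorten them with the white edges $W^{k}_{k+1}$ and $W^{l-1}_{l}$ produced by transformation~$A$. The paper's proof is a two-line sketch saying exactly this; your discussion of ``spare'' copies, move ordering, and degenerate small cases is extra bookkeeping (and note that L8/L6 do not actually consume the white edge, so the first $W^{k}_{k+1}$ you meet already suffices for both the corner and the keel), but it does not depart from the paper's argument.
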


\begin{proof}
For the B-ladder diagram in the proof of Proposition \ref{prop-hang}, add $B^{k-1}_{k+1}$ (resp. $B^{l-1}_{l+1}$) to the top (resp. to the bottom), and use the white edge $W{k}_{k+1}$ (resp. $W^{l-1}_l$) to shorten it. 
\end{proof}

\noindent We call a formation of Definition \ref{def-h} or Corollary \ref{cor-h} a {\it hang-glider formation.}

\subsection{Loupe formation}
\label{subsec-l}

\begin{definition}
A strictly upper triangular matrix $M$ is said to have an {\it $L_1 (k,l)$-formation} (or simply an {\it $L_1$-formation}) if $M$ satisfies 
\begin{align*}
\left\{
\begin{array}{ll}
M(k,j)=2 \text{ when } j=k+1 \text{ or } k+3 \leq j \leq l. \\
M(k,k+2)=0 \text{ or } 2. \\
M(k+1,k+3)=0 \text{ or } 2. \\
M(k+2,j)=2 \text{ when } k+3 \leq j \leq l. \\
M(i,j)=0 \text{ otherwise}. 
\end{array}
\right.
\end{align*}
\label{def-l1}
\end{definition}

\begin{proposition}
Any strictly upper triangular matrix of an $L_1$-formation is a CN-realizable matrix. 
\label{prop-loupe}
\end{proposition}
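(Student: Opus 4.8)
The plan is to imitate the strategy used for the snake and hang-glider formations: build a specific B-ladder diagram of $M$ from the blocks making up the $L_1(k,l)$-formation, keep one long black edge at the top (or bottom) as a ``spine'', transform the remaining blocks into white edges using Propositions \ref{prop-r-form}, \ref{prop-rc-form}, and \ref{prop-alpha}, and then collapse the spine against those white edges via the sliding moves L5--L9, finishing with L1. The key observation is that the $L_1(k,l)$-formation decomposes naturally into two ``column/row bundles'': the edges $B^k_j$ for $k+3\le j\le l$ together with $B^k_{k+1}$ (and possibly $B^k_{k+2}$), and the edges $B^{k+2}_j$ for $k+3\le j\le l$ (and possibly $B^{k+1}_{k+3}$). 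Each bundle $B^{k}_{k+3}B^{k}_{k+4}\dots B^{k}_{l}$ is, by Proposition \ref{prop-r-form}, equivalent to a symmetric palindrome of white edges $W^{k+2}_{k+3}\dots W^{l-1}_{l}W^{l-1}_{l}\dots W^{k+2}_{k+3}$, and similarly for the $B^{k+2}_{\ast}$ bundle.

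Concretely, I would take the B-ladder diagram
\begin{align*}
D = B^{k}_{l}\; B^{k}_{k+1}\,(B^{k}_{k+2})\; B^{k}_{k+3}B^{k}_{k+4}\cdots B^{k}_{l-1}\; (B^{k+1}_{k+3})\; B^{k+2}_{k+3}B^{k+2}_{k+4}\cdots B^{k+2}_{l},
\end{align*}
with $B^k_l$ kept at the very top as the spine (here the parenthesized edges are present only when the corresponding matrix entries equal $2$). Ignoring $B^k_l$ for the moment, the remaining edges form (a matrix realizing) an $rc$-type configuration on columns/rows between $k$ and $l$ — essentially an $rc(k+2,l)$-formation glued to the short $r$-part at the top — so by Propositions \ref{prop-rc-form} and \ref{prop-alpha} this portion can be converted into a W-ladder diagram consisting of two palindromic blocks of white edges, one sitting just below $B^k_l$ and one at the bottom. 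At that stage the diagram looks like $B^k_l$ followed by a long string of white edges among strands $k,\dots,l$. I then shorten $B^k_l$ from the top using L6 (and L7 when $M(k,k+2)=2$) against the leading white edges $W^k_{k+1}$, $W^{k+1}_{k+2},\dots$, reducing its upper endpoint from $k$ toward $l-1$; after rearranging the remaining white edges by L3, I shorten it from the bottom with L8 (and L9 when $M(k+1,k+3)=2$) against the trailing $W^{l-1}_l$, $W^{l-2}_{l-1},\dots$, until $B^k_l$ has become a single-rung edge $B^{?}_{?+1}$, which I remove with L1. A count analogous to the remark in the proof of Proposition \ref{prop-rc-form} shows there are enough white rungs on each side to absorb the spine, since the two bundles each contribute $l-k-2\ge 1$ pairs.

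The main obstacle I anticipate is bookkeeping the interaction between the ``extra'' optional edges $B^k_{k+2}$ and $B^{k+1}_{k+3}$ and the spine: these short edges live exactly in the strand range $k,k+1,k+2,k+3$ where the sliding moves L5--L9 have their delicate index hypotheses (e.g. L5 requires $j<k$, $k+1<i$, or $i<k<k+1<j$; L6/L7 require $i+1<j$), so one must be careful that when $B^k_l$ is being slid past the region, the optional rungs have already been converted to white edges or positioned so that a valid move applies, rather than creating an obstruction. As in Figures \ref{fig-rc-ex} and \ref{fig-sh-pf}, the cleanest presentation will be a figure showing the generic case with all optional edges present and an instruction to ignore the broken edges when the corresponding entries vanish; verifying that the same move sequence still works (and in fact only becomes easier) when an optional edge is absent is the routine-but-necessary check. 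Everything else is a direct application of the already-established ladder-move lemmas.
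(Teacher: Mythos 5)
There is a genuine gap, and it sits at the heart of your argument. You claim that after setting the spine $B^k_l$ aside, the remaining edges of the $L_1(k,l)$-formation form ``essentially an $rc(k+2,l)$-formation glued to the short $r$-part at the top,'' so that Propositions \ref{prop-rc-form} and \ref{prop-alpha} convert them into white edges. They do not: an $L_1$-formation consists of \emph{two parallel row bundles} (row $k$ and row $k+2$) whose column ranges overlap in $k+3,\dots,l$, whereas every snake formation ($rc$, $c$-$r$, $c$-$rc$, $rc$-$r$, $rc$-$rc$) is a chain of row- and column-segments glued corner to corner. No proposition from Section \ref{subsec-s} applies to this shape --- which is exactly why the loupe formation needs its own treatment. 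Your preliminary claim is also false as stated: $B^k_{k+3}B^k_{k+4}\cdots B^k_l$ is \emph{not} equivalent to the palindrome $W^{k+2}_{k+3}\cdots W^{l-1}_l W^{l-1}_l\cdots W^{k+2}_{k+3}$; that white sequence realizes the $r(k+2,l)$-formation and never touches strand $k$, while the black edges all do. Proposition \ref{prop-BtoW} requires the row bundle to start at the adjacent rung $B^k_{k+1}$ and be gap-free, and when $M(k,k+2)=0$ the row-$k$ part $\{(k,k+1),(k,k+3),\dots,(k,l)\}$ is not even T0 as a matrix on its own (take $i=k$, $j=k+2$, $m=k+3$), hence not CN-realizable in isolation by Proposition \ref{prop-T0}. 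So no strategy that converts the two bundles to white edges \emph{independently} and then absorbs a spine can succeed in that case; the issue is not the ``bookkeeping'' of the optional rungs that you flag at the end, but the decomposition itself. (In the easy subcase $M(k,k+2)=2$, $M(k+1,k+3)=0$ the matrix does split into two disjoint $r$-formations and Propositions \ref{prop-r-form} and \ref{prop-02-sum} finish immediately, but that is not the general case.)

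The missing idea is that the two bundles must be processed \emph{interleaved}, each feeding the other the white rungs it needs. The paper's proof orders the edges as $S^k_l=B^k_{k+1}\,B^k_{k+3}\cdots B^k_l\,B^{k+2}_l\cdots B^{k+2}_{k+3}$, converts the two short edges $B^k_{k+1}$ and $B^{k+2}_{k+3}$ into pairs of white rungs by L1, and then sweeps one copy of $W^k_{k+1}$ down and one copy of $W^{k+2}_{k+3}$ up through all the long black edges (L5--L8), shortening \emph{every} edge of \emph{both} bundles by one step simultaneously; this yields $W^k_{k+1}W^{k+1}_{k+2}\,S^{k+1}_l\,W^{k+2}_{k+3}W^k_{k+1}$ and the argument recurses until $S^{l-3}_l$, after which everything collapses. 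There is no separate spine ($B^k_l$ is just the last edge of the top bundle), and the optional entries $(k,k+2)$ and $(k+1,k+3)$ are absorbed at the very end using leftover white rungs. Your write-up does not contain this recursion or any substitute for it, so the proof does not go through as proposed.
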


\begin{proof}
Let $M$ be an $n \times n$ matrix of an $L_1(k,l)$-formation. \\
Let $S^k_l= B^k_{k+1} B^k_{k+3} B^k_{k+4} \dots B^k_{l-1} B^k_{l} B^{k+2}_{l} B^{k+2}_{l-1} \dots B^{k+2}_{k+4} B^{k+2}_{k+3}$, and take a B-ladder diagram $D$ of $M$ as $D=(B^k_{k+2}) S^k_l (B^{k+1}_{k+3})$. 
Apply ladder moves L1 for the black edges $B^k_{k+1}$ and $B^{k+2}_{k+3}$ to obtain $W^k_{k+1}W^k_{k+1}$ and $W^{k+2}_{k+3}W^{k+2}_{k+3}$. 
Apply L7, L5, L3 to move down the lower white edge $W^k_{k+1}$, and apply L6, L5, L8 to move up the above white edge $W^{k+2}_{k+3}$. 
Then we obtain 
$$(B^k_{k+2}) W^k_{k+1} W^{k+1}_{k+2} S^{k+1}_l W^{k+2}_{k+3} W^k_{k+1} (B^{k+1}_{k+3}).$$ 
Repeat the procedure until we obtain $S^{l-3}_l$. 
Then, by the ladder moves L1, L7, L3, L8, and L1, we obtain 
$$(B^k_{k+2}) W^k_{k+1} W^{k+2}_{k+3}  W^{k+1}_{k+2}  W^{k+3}_{k+4} \dots  W^{l-1}_{l} W^{l-2}_{l-1} W^{l-2}_{l-1}  W^{l-1}_{l} \dots  W^{k+3}_{k+4}  W^{k+1}_{k+2} W^{k+2}_{k+3} W^k_{k+1} (B^{k+1}_{k+3})$$
as shown in Figure \ref{fig-lou}. 
Apply L3 for the pair of white edges at the bottom to obtain
$$(B^k_{k+2}) W^k_{k+1} W^{k+2}_{k+3}  W^{k+1}_{k+2}  W^{k+3}_{k+4} \dots  W^{l-1}_{l} W^{l-2}_{l-1} W^{l-2}_{l-1}  W^{l-1}_{l} \dots  W^{k+3}_{k+4}  W^{k+1}_{k+2} W^{k}_{k+1} W^{k+2}_{k+3} (B^{k+1}_{k+3}).$$
If $M(k, k+2)$ or $M(k+1, k+3)$ is 2, apply L6 or L9 as shown in Figure \ref{fig-lou} to obtain a W-ladder diagram. 
\end{proof}
\begin{figure}[ht]
\centering
\includegraphics[width=12cm]{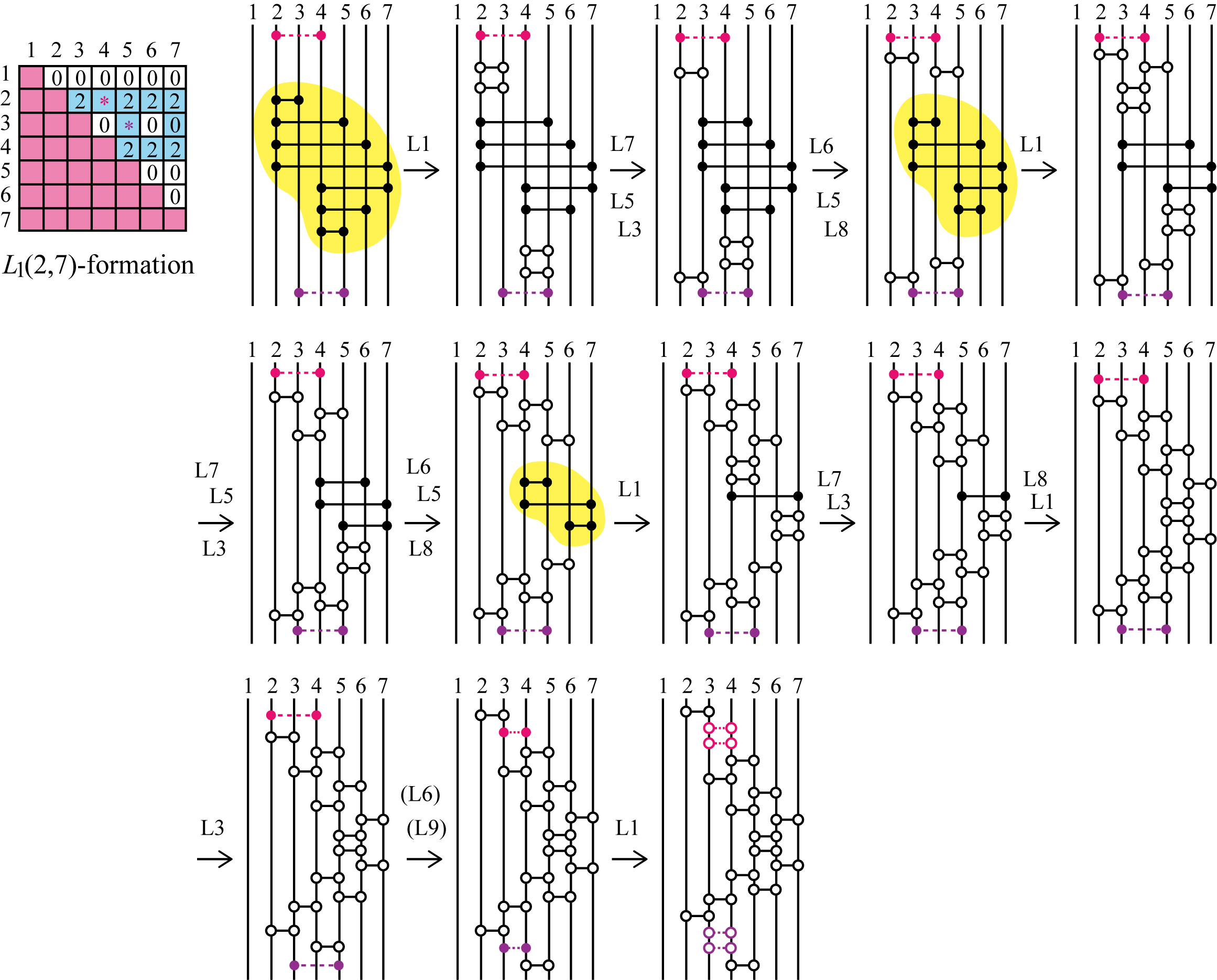}
\caption{A matrix of an $L_1(2,7)$-formation is a CN-realizable matrix. $S^2_7$, $S^3_7$, $S^4_7$ are highlighted in yellow. }
\label{fig-lou}
\end{figure}

\begin{corollary}
Let $M$ be an $n \times n$ strictly upper triangular matrix that has an $L_1(k,l)$-formation. 
Let $M_1$ (resp. $M_2$) be the $n \times n$ matrix such that $M_1(k-1, k+1)=2$ (resp. $M_2(k+2, l+1)=2$) and the other entries are 0. 
The matrices $M+M_1$, $M+M_2$, $M+M_1+M_2$ are also CN-realizable. 
\label{cor-l1}
\end{corollary}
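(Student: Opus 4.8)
The plan is to mimic the proof of Corollary~\ref{cor-h}. Starting from the B-ladder diagram $D=(B^k_{k+2})\,S^k_l\,(B^{k+1}_{k+3})$ of $M$ used in the proof of Proposition~\ref{prop-loupe}, I would adjoin the extra black edges coming from $M_1$ and $M_2$ at convenient positions, run the very same sequence of ladder moves that turns $D$ into a W-ladder diagram, and then eliminate the surviving extra black edges by first sliding them along white edges just created and finishing with L1. Proposition~\ref{prop-W} then yields CN-realizability, since $M+M_1$, $M+M_2$ and $M+M_1+M_2$ are again $(0,2)$-matrices (the added entries lie in rows, resp.\ a column, where $M$ vanishes).

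For $M+M_1$, prepend $B^{k-1}_{k+1}$ to $D$. Since $B^{k-1}_{k+1}$ commutes with every black edge of $D$ by L2 and is not used by any move, I may keep it at the top while transforming $D$ exactly as in Proposition~\ref{prop-loupe}; the result is $B^{k-1}_{k+1}$ followed by the W-ladder word of $M$, which begins $(B^k_{k+2})\,W^k_{k+1}\cdots$. If $M(k,k+2)=2$, first apply L6 and L1 to $B^k_{k+2}\,W^k_{k+1}$, which leaves a fresh $W^k_{k+1}$ at the front; in either case one now has $B^{k-1}_{k+1}\,W^k_{k+1}\cdots$, and L8 turns this into $W^k_{k+1}\,B^{k-1}_k\cdots$, after which L1 replaces $B^{k-1}_k$ by $W^{k-1}_k\,W^{k-1}_k$, giving a W-ladder diagram. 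For $M+M_2$, adjoin $B^{k+2}_{l+1}$ at the bottom of $D$; after the transformation (and, if $M(k+1,k+3)=2$, after first clearing $B^{k+1}_{k+3}$ by L9) the diagram is the W-ladder word of $M$ followed by $B^{k+2}_{l+1}$, with a white edge $W^{k+2}_{k+3}$ adjacent to it. Now drive $B^{k+2}_{l+1}$ upward through the word: L7 against $W^{k+2}_{k+3}$ shortens it to $B^{k+3}_{l+1}$, a few L5-moves slide it past white edges on disjoint strands, L7 against $W^{k+3}_{k+4}$ shortens it to $B^{k+4}_{l+1}$, and so on, consuming in turn the white edges $W^{k+2}_{k+3},W^{k+3}_{k+4},\dots,W^{l-1}_{l}$ occurring in the W-ladder word of $M$; once the edge has length one, L1 finishes. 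For $M+M_1+M_2$ one does both modifications, which take place in disjoint parts of the diagram and do not interfere.

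The genuine work, and the main obstacle, is the bookkeeping for $M+M_2$: I must check that as $B^{k+2}_{l+1}$ is pushed upward the white edges $W^{j}_{j+1}$ it needs appear in the correct order in the W-ladder word produced by Proposition~\ref{prop-loupe}, and that every L5-move skipping an intervening white edge is legal. This is parallel to, but combinatorially messier than, the analogous step in the proof of Corollary~\ref{cor-h}. It may well be cleaner to re-derive a more convenient normal form for the W-ladder word of $M$ -- for instance by inserting $B^{k+2}_{l+1}$ immediately after $B^k_l$ in $D$, so that it rides along with the comb on strand $k+2$ and the turnaround of that comb merely moves from column $l$ to column $l+1$ -- than to reuse verbatim the word displayed in the proof of Proposition~\ref{prop-loupe}. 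The two optional entries $M(k,k+2)$ and $M(k+1,k+3)$ each split the argument into two easy subcases, exactly as in Proposition~\ref{prop-loupe}.
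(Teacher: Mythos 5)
Your proposal is correct and follows essentially the same route as the paper: the paper's proof likewise appends $B^{k-1}_{k+1}$ at the top and $B^{k+2}_{l+1}$ at the bottom of the B-ladder diagram from Proposition \ref{prop-loupe}, shortens them using $W^k_{k+1}$ and the white edges $W^{k+2}_{k+3}, W^{k+3}_{k+4}, \dots, W^{l-1}_l$ respectively, and finishes with L1. The bookkeeping you flag does go through, since in the W-ladder word of Proposition \ref{prop-loupe} the bottom-up order of white edges is $W^{k+2}_{k+3}, W^k_{k+1}, W^{k+1}_{k+2}, W^{k+3}_{k+4}, \dots, W^{l-1}_l$, exactly what the L7/L5 pushes require.
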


\begin{proof}
For the B-ladder diagram in the proof of Proposition \ref{prop-loupe}, add $B^{k-1}_{k+1}$ to the top, and $B^{k+2}_{l+1}$ to the bottom. 
Use the white edge $W^k_{k+1}$ for $B^{k-1}_{k+1}$, the edges $W^{k+2}_{k+3},~W^{k+3}_{k+4}, \dots , W^{l-1}_l$ for $B^{k+2}_{l+1}$ to shorten them and apply L1 to transform them into white edges.  
\end{proof}

\begin{definition}
A strictly upper triangular matrix $M$ is said to have an {\it $L_2 (k,l)$-formation} (or simply an {\it $L_2$-formation}) if $M$ satisfies 
\begin{align*}
\left\{
\begin{array}{ll}
M(k,j)=2 \text{ when } j=k+1, k+2 \text{ or } k+4 \leq j \leq l. \\
M(k,k+3)=0 \text{ or } 2. \\
M(k+2,k+4)=0 \text{ or } 2. \\
M(k+3,j)=2 \text{ when } k+4 \leq j \leq l. \\
M(i,j)=0 \text{ otherwise}. 
\end{array}
\right.
\end{align*}
\label{def-l2}
\end{definition}

\begin{proposition}
Any strictly upper triangular matrix of an $L_2$-formation is a CN-realizable matrix. 
\label{prop-loupe2}
\end{proposition}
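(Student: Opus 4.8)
The plan is to imitate the proof of Proposition~\ref{prop-loupe}: realize $M$ by an explicit B-ladder diagram in which one long black edge $B^k_l$ is kept at the top, peel the ``spine'' of the loupe off one strand at a time by ladder moves, and finish by converting all remaining short edges into white edges, so that Proposition~\ref{prop-W} applies. We may assume $l\ge k+4$, since for smaller $l$ the $L_2(k,l)$-formation degenerates to a matrix already covered by Propositions~\ref{prop-r-form} and \ref{prop-3zero}.

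Concretely, I would set
$$S^k_l=B^k_{k+1}B^k_{k+2}\,B^k_{k+4}B^k_{k+5}\cdots B^k_{l-1}B^k_l\,B^{k+3}_l B^{k+3}_{l-1}\cdots B^{k+3}_{k+5}B^{k+3}_{k+4}$$
and take the B-ladder diagram $D=(B^k_{k+3})\,S^k_l\,(B^{k+2}_{k+4})$ of $M$, where a parenthesized edge is present exactly when the corresponding optional entry, $M(k,k+3)$ or $M(k+2,k+4)$, equals $2$: the handle edges $B^k_{k+1},B^k_{k+2}$ come from $M(k,k+1)=M(k,k+2)=2$, the edges $B^k_{k+4}\cdots B^k_l$ from $M(k,j)=2$ for $k+4\le j\le l$, and the edges $B^{k+3}_l\cdots B^{k+3}_{k+4}$ from $M(k+3,j)=2$ for $k+4\le j\le l$. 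I would first dispose of the handle: by Proposition~\ref{prop-BtoW} the block $B^k_{k+1}B^k_{k+2}$ is equivalent to $W^k_{k+1}W^{k+1}_{k+2}W^{k+1}_{k+2}W^k_{k+1}$, and by L1 the edge $B^{k+3}_{k+4}$ is equivalent to $W^{k+3}_{k+4}W^{k+3}_{k+4}$. Then, exactly as in Proposition~\ref{prop-loupe}, I would slide the lower white edges produced at the top downward (by L7, L5, L3) and the upper white edges produced at the bottom upward (by L6, L5, L8) past the rest of $S^k_l$; this reproduces the same configuration on strands $k+1,k+2,\dots$, i.e.\ it replaces $S^k_l$ together with its handle by $S^{k+1}_l$ together with its handle, inside a growing collar of white edges. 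Iterating until the spine is exhausted, and then cleaning up with L1, L3 and with L6 or L9 to absorb the optional edges $B^k_{k+3}$ and $B^{k+2}_{k+4}$, produces a W-ladder diagram; a picture analogous to Figure~\ref{fig-lou} would make the sequence transparent, with the ``ring'' of the loupe now sitting on strands $k+2,k+3$ instead of $k+1,k+2$.

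The step I expect to be the main obstacle is the handle. In the $L_1$ case both ends of the loupe were length-one edges that L1 removes at once, whereas here $B^k_{k+2}$ is a length-two edge, so the handle must first be rewritten as a short string of white edges and then transported to the left boundary of the diagram without colliding with $B^k_l$, with the optional edge $B^k_{k+3}$, or with the spine. Checking that the index inequalities required by L5--L9 hold at every move of this transport, and that the presence or absence of the optional edges $B^k_{k+3}$ and $B^{k+2}_{k+4}$ never blocks a required move, is the delicate bookkeeping; once that is verified, the iterative reduction and the final clean-up go through essentially verbatim as in Proposition~\ref{prop-loupe}.
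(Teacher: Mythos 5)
Your proposal follows essentially the same route as the paper's proof: the same spine $S^k_l$, the same B-ladder diagram $(B^k_{k+3})\,S^k_l\,(B^{k+2}_{k+4})$, the same recursion $S^k_l \to W^k_{k+1}W^{k+3}_{k+4}\,S^{k+1}_l\,W^{k+3}_{k+4}W^k_{k+1}$ iterated down to the terminal spine, and the same absorption of the optional edges $B^k_{k+3}$ and $B^{k+2}_{k+4}$ at the end. The only cosmetic difference is that you dispose of the handle $B^k_{k+1}B^k_{k+2}$ via Proposition~\ref{prop-BtoW} instead of reading the individual moves off Figure~\ref{fig-lou2}, which amounts to the same sequence of ladder moves.
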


\begin{proof}
Let $S^k_l=B^k_{k+1} B^k_{k+2} B^k_{k+4} B^k_{k+5} \dots B^k_{l-1} B^k_l B^{k+3}_{l} B^{k+3}_{l-1} \dots B^{k+3}_{k+5} B^{k+3}_{k+4}$. 
Then $S^k_l$ is transformed into $W^{k}_{k+1} W^{k+3}_{k+4} S^{k+1}_l W^{k+3}_{k+4} W^{k}_{k+1}$ by the ladder moves as shown in Figure \ref{fig-lou2}. 
Let $M$ be an $n \times n$ matrix of an $L_2(k,l)$-formation. 
Take a B-ladder diagram $D$ of $M$ as $(B^k_{k+3}) S^k_l (B^{k+2}_{k+4})$. 
Apply the ladder moves until we obtain $S^{l-4}_l$, which is transformed into white edges by ladder moves as shown in Figure \ref{fig-lou2}. 
If $M(k, k+3)$ or $M(k+2, k+4)$ is 2, use the first half sequence of white edges or the white edge $W^{k+3}_{k+4}$ to shorten them. 
\end{proof}
\begin{figure}[ht]
\centering
\includegraphics[width=11cm]{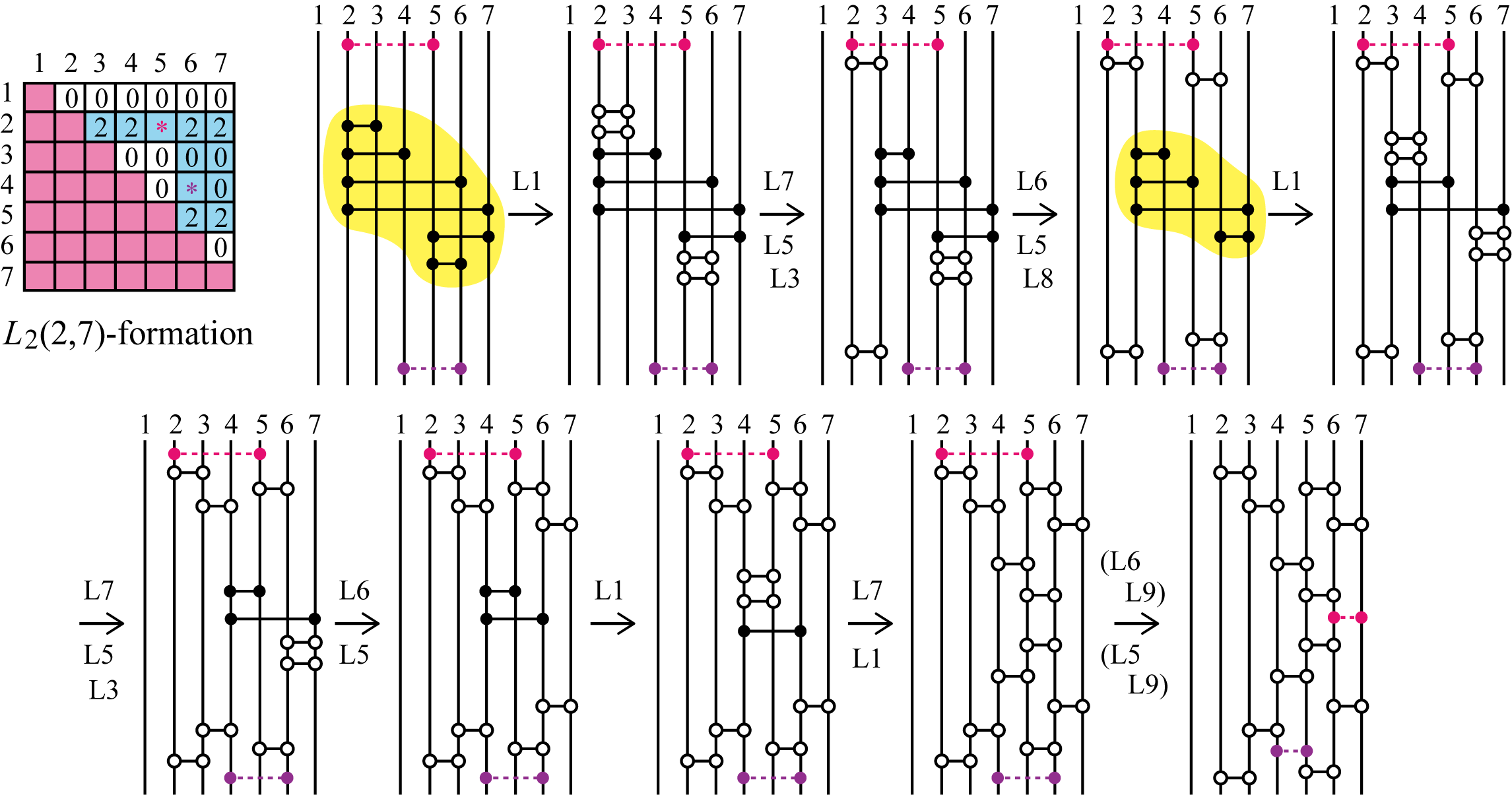}
\caption{A matrix of an $L_2(2,7)$-formation is a CN-realizable matrix. $S^2_7$ and $S^3_7$ are highlighted in yellow. }
\label{fig-lou2}
\end{figure}

\begin{definition}
A strictly upper triangular matrix $M$ is said to have an {\it $L_3 (k,l)$-formation} (or simply an {\it $L_3$-formation}) if $M$ satisfies 
\begin{align*}
\left\{
\begin{array}{ll}
M(k,j)=2 \text{ when } j=k+1 \text{ or } k+4 \leq j \leq l. \\
M(k,k+2)=0 \text{ or } 2. \\
M(k+1,k+4)=0 \text{ or } 2. \\
M(k+2,j)=2 \text{ when } k+4 \leq j \leq l. \\
M(k+3,j)=2 \text{ when } k+4 \leq j \leq l. \\
M(i,j)=0 \text{ otherwise}. 
\end{array}
\right.
\end{align*}
\label{def-l3}
\end{definition}

\begin{proposition}
Any strictly upper triangular matrix of an $L_3$-formation is a CN-realizable matrix. 
\label{prop-loupe3}
\end{proposition}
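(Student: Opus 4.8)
The plan is to mimic the proofs of Propositions \ref{prop-loupe} and \ref{prop-loupe2}: attach to $M$ a B-ladder diagram built around a ``core'' word $S^k_l$, establish a reduction step that sheds a fixed number of white edges while replacing $S^k_l$ by the shifted core $S^{k+1}_l$, iterate until the core is small, and close the argument by recognising the small core as an already-realized formation.

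First I would fix the core. Since black edges commute under L2, I may list the forced black edges of an $L_3(k,l)$-formation in whatever order is convenient, say
\begin{align*}
S^k_l = B^k_{k+1}\, B^{k+3}_{k+4}\, B^k_{k+4}B^k_{k+5}\cdots B^k_{l}\, B^{k+2}_{k+4}B^{k+2}_{k+5}\cdots B^{k+2}_{l}\, B^{k+3}_{k+5}B^{k+3}_{k+6}\cdots B^{k+3}_{l},
\end{align*}
and take the B-ladder diagram $D=(B^k_{k+2})\, S^k_l\, (B^{k+1}_{k+4})$, where a parenthesized edge is present exactly when the corresponding optional entry of $M$ equals $2$. (A routine check confirms that every $L_3$-formation is T0, a necessary condition by Proposition \ref{prop-T0}.) The heart of the proof is a reduction step: when $l>k+4$, apply L1 to the two length-one black edges $B^k_{k+1}$ and $B^{k+3}_{k+4}$, producing $W^k_{k+1}W^k_{k+1}$ and $W^{k+3}_{k+4}W^{k+3}_{k+4}$, and then use L5--L9 to drive these white edges through the three staircases in rows $k$, $k+2$, $k+3$. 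Each such pass shortens the staircases by a column and translates the core one strand to the right, so after cleaning up with L3 the core has been replaced by $S^{k+1}_l$ flanked on each side by a bounded number of white edges, the parenthesized edges riding along at the top and bottom. Iterating the reduction $l-k-4$ times turns $D$ into a word of the form $(B^k_{k+2})\, (\text{white edges})\, S^{l-4}_l\, (\text{white edges})\, (B^{k+1}_{k+4})$.

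Then I would dispose of the residual core and the two optional edges. A direct computation gives $S^{l-4}_l = B^{l-4}_{l-3}\, B^{l-1}_{l}\, B^{l-4}_{l}\, B^{l-2}_{l}$, which up to L2 is the B-ladder of an $rc(l-4,l)$-formation; hence the proof of Proposition \ref{prop-rc-form} turns it into a W-ladder diagram. (Its support also lies within five consecutive strands, so CN-realizability of the $5\times 5$ restriction could instead be read off from Theorem \ref{thm-55} and Proposition \ref{prop-small}.) Finally the parenthesized edges, when present, are absorbed exactly as the optional edges were in the proofs of Propositions \ref{prop-loupe} and \ref{prop-loupe2}: $B^k_{k+2}$ by the adjacent white edge $W^k_{k+1}$ using L9, and $B^{k+1}_{k+4}$ by a white edge $W^{k+3}_{k+4}$ brought alongside via L3 and L5 using L6. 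Once $D$ has become a W-ladder diagram, Proposition \ref{prop-W} gives that $M$ is CN-realizable.

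The step I expect to be the main obstacle is the verification of the reduction step. Because rows $k+2$ and $k+3$ are adjacent and all three staircases overlap on the columns $k+4,\dots,l$, every push of a white edge through the core has to be checked against the side conditions of L5--L9 — notably the inequalities $i+1<j$ in L6 and L7 and the trichotomy $j<k$, $k+1<i$, $i<k<k+1<j$ of L5 — and one must verify that the outcome is \emph{exactly} $S^{k+1}_l$ up to L2 and L3, since otherwise the iteration does not close. The optional entries split the verification into cases, but the corresponding parts of the $L_1$- and $L_2$-arguments indicate that these are handled uniformly.
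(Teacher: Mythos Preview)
Your plan is essentially the paper's proof: the same core $S^k_l$ (up to L2), the same iterative reduction $S^k_l \rightsquigarrow (\text{white})\,S^{k+1}_l\,(\text{white})$, the same terminal case $S^{l-4}_l$, and the same treatment of the optional entries. Two small corrections to your sketch. First, the reduction sheds \emph{three} pairs of white edges per pass, not two: in the paper's version $S^k_l \to W^{k}_{k+1} W^{k+3}_{k+4} W^{k+2}_{k+3}\, S^{k+1}_l\, W^{k+2}_{k+3} W^{k+3}_{k+4} W^{k}_{k+1}$, and the extra pair $W^{k+2}_{k+3}$ does not come from an L1 on a pre-existing length-one edge but from a cascade --- pushing $W^{k+3}_{k+4}$ past $B^{k+2}_{k+4}$ (via L9) shortens it to $B^{k+2}_{k+3}$, which then takes an L1; this cascade is exactly what moves the row-$(k+2)$ staircase to row $k+3$. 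Second, the optional $B^{k+1}_{k+4}$ has length three, so a single $W^{k+3}_{k+4}$ cannot finish it; the paper uses the sequence $W^{k+1}_{k+2} W^{k+2}_{k+3} W^{k+3}_{k+4} W^{k}_{k+1}$ sitting on the right-hand side of the iterated reduction. (Also, $B^k_{k+2}$ is shortened with $W^k_{k+1}$ via L6, not L9.)
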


\begin{proof}
Let $S^k_l=B^k_{k+1} B^k_{k+4} B^k_{k+5} \dots B^{k}_{l} B^{k+2}_{k+4} B^{k+2}_{k+5} \dots B^{k+2}_{l} B^{k+3}_{l} B^{k+3}_{l-1} \dots B^{k+3}_{k+4}$. 
Then $S^k_l$ is transformed into $W^{k}_{k+1} W^{k+3}_{k+4} W^{k+2}_{k+3} S^{k+1}_l W^{k+2}_{k+3} W^{k+3}_{k+4} W^{k}_{k+1}$ by the ladder moves as shown in Figure \ref{fig-lou3}. 
Let $M$ be an $n \times n$ matrix of an $L_3(k,l)$-formation. 
Take a B-ladder diagram $D$ of $M$ as $D=(B^k_{k+2}) S^k_l (B^{k+1}_{k+4})$. 
Apply the ladder moves until we obtain $S^{l-4}_l$, which is transformed into white edges as shown in Figure \ref{fig-lou3}.
If $M(k, k+2)$ or $M(k+1, k+4)$ is 2, use the white edge $W^{k}_{k+1}$ or the sequence $W^{k+1}_{k+2} W^{k+2}_{k+3} W^{k+3}_{k+4} W^{k}_{k+1}$ to shorten them. 
\end{proof}
\begin{figure}[ht]
\centering
\includegraphics[width=11cm]{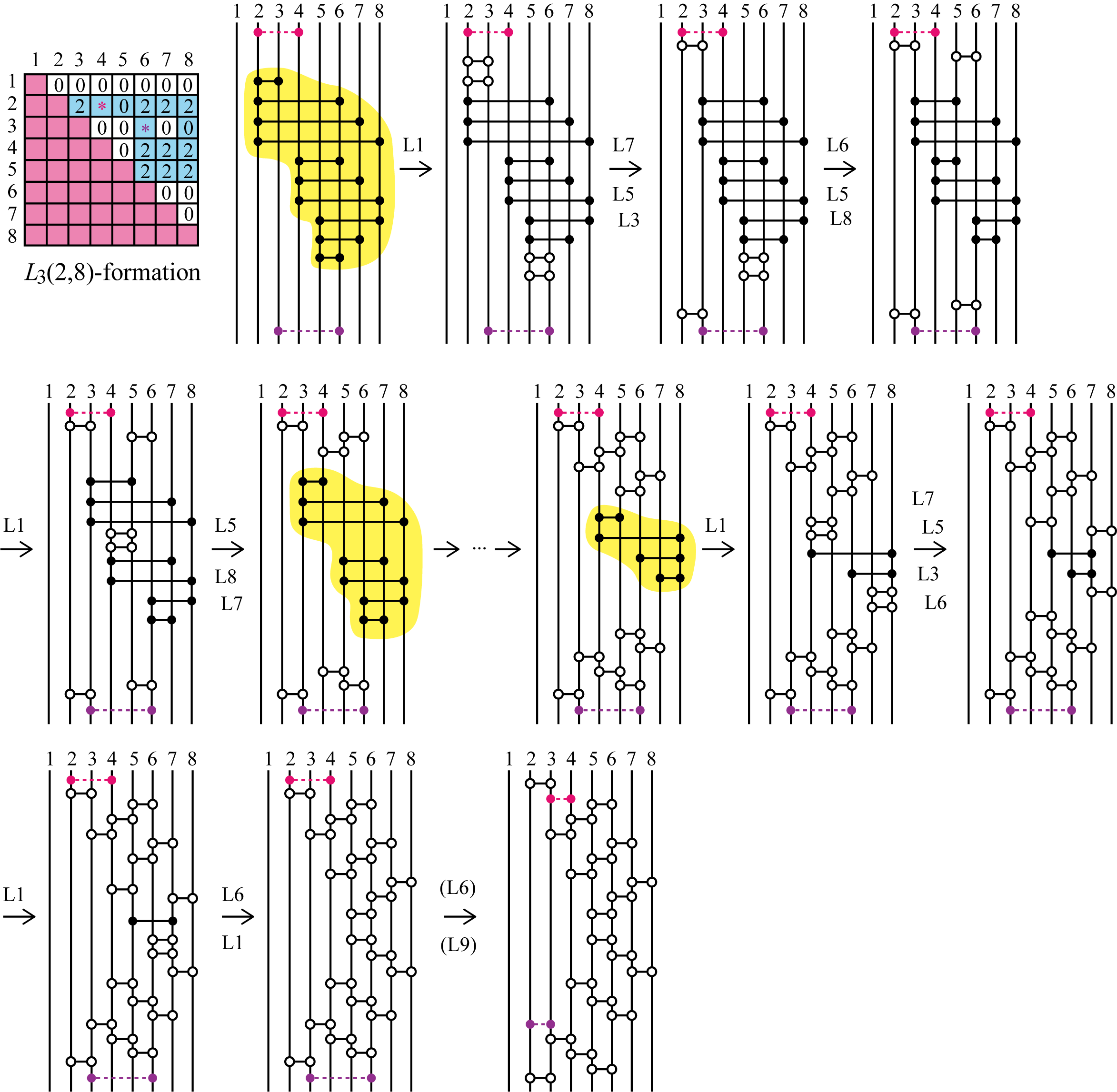}
\caption{A matrix of an $L_3(2,8)$-formation is a CN-realizable matrix. $S^2_8$, $S^3_8$, $S^4_8$ are highlighted in yellow. }
\label{fig-lou3}
\end{figure}

\begin{corollary}
Let $M$ be an $n \times n$ matrix of an $L_1 (k,l)$, $L_2(k,l)$ or $L_3(k,l)$-formation. 
Let $M_1$ be an $n\times n$ matrix such that $M_1(k,l)=2$ and the other entries are zero. 
Then, the matrix $M-M_1$ is a CN-realizable matrix.
\label{cor-l123}
\end{corollary}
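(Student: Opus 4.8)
The plan is to remove the single entry at $(k,l)$ by reducing to a \emph{shorter} loupe formation, in the same spirit as Corollary~\ref{cor-l1}.

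First I would handle the $L_1(k,l)$ case with $l\ge k+4$. I claim $M-M_1=N+M_2$, where $N$ is the matrix with the $L_1(k,l-1)$-formation agreeing with $M$ at the optional positions $(k,k+2)$ and $(k+1,k+3)$, and $M_2$ is the matrix whose only nonzero entry is $M_2(k+2,l)=2$. This is immediate entrywise: $N$ and $M-M_1$ differ only at $(k+2,l)$, and since row $k+2$ of $N$ stops at column $l-1$ the sum $N+M_2$ has value $2$ there. Because $(k+2,l)=\bigl(k+2,(l-1)+1\bigr)$, the pair $(N,M_2)$ is precisely the pair called $(M,M_2)$ in Corollary~\ref{cor-l1} applied to the formation $L_1(k,l-1)$; hence $M-M_1$ is CN-realizable.

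Next, for $L_2(k,l)$ and $L_3(k,l)$ with $l$ large enough, I would use the analogous decomposition $M-M_1=N+M_2$, where $N$ has the $L_2(k,l-1)$- (resp.\ $L_3(k,l-1)$-)formation with the same optional entries as $M$, and $M_2$ has nonzero entry $2$ at $(k+3,l)$ in the $L_2$ case (resp.\ at $(k+2,l)$ and $(k+3,l)$ in the $L_3$ case); one checks that these positions are not already occupied by $N$. Then I would imitate the proof of Corollary~\ref{cor-l1}: start from the W-ladder diagram of $N$ built in the proof of Proposition~\ref{prop-loupe2} (resp.\ Proposition~\ref{prop-loupe3}), append to its bottom the black edge $B^{k+3}_l$ (resp.\ the black edges $B^{k+2}_l$ and $B^{k+3}_l$), shorten each appended black edge to length one by repeatedly pushing it up past the white edges $W^{l-1}_l, W^{l-2}_{l-1}, \dots, W^{k+4}_{k+5}$ occurring near the bottom of that diagram (ladder moves L8 and L3), and finish with L1; by Proposition~\ref{prop-W} this gives a W-ladder diagram of $M-M_1$. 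The remaining small cases --- $l=k+3$ for $L_1$ and $l=k+4$ for $L_2$ and $L_3$ --- involve at most five strands and are disposed of directly: there $M-M_1$ splits as a sum of two pairwise non-overlapping $(0,2)$-matrices, an $r$-formation piece in row $k$ and a $c$-formation piece down the last column (absorbing the optional entries), so Propositions~\ref{prop-r-form} and \ref{prop-02-sum} apply; alternatively Proposition~\ref{prop-3zero} settles several of them.

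The step I expect to be the main obstacle is the bookkeeping in the $L_2$ and $L_3$ cases: I must verify that the explicit W-ladder diagrams produced in Propositions~\ref{prop-loupe2} and \ref{prop-loupe3} really do contain, running consecutively near their bottom edge, the white edges $W^{l-1}_l, \dots, W^{k+4}_{k+5}$ needed to shorten the appended black edge(s) all the way down. This only requires reading off the last handful of ladder moves in those proofs, so it is routine, but it has to be checked case by case. (If one prefers to sidestep the diagrams, the same content can be packaged beforehand as $L_2$- and $L_3$-analogues of Corollary~\ref{cor-l1} --- that adjoining the entry $(k+3,l+1)$ to an $L_2(k,l)$-formation, resp.\ the entries $(k+2,l+1)$ and $(k+3,l+1)$ to an $L_3(k,l)$-formation, keeps the matrix CN-realizable --- and then invoked.)
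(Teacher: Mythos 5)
Your argument is correct, but it is a genuinely different route from the paper's. The paper disposes of all three cases in one stroke: in the proofs of Propositions \ref{prop-loupe}, \ref{prop-loupe2} and \ref{prop-loupe3} the long black edge $B^k_l$ is carried along essentially passively through the recursive transformation of $S^k_l$, so one simply deletes it from the initial B-ladder diagram and runs the identical sequence of ladder moves, obtaining a W-ladder diagram of $M-M_1$ directly. You instead peel off column $l$, writing $M-M_1$ as an $L_i(k,l-1)$-formation plus one or two extra entries at the bottom of that column. For $L_1$ this is a clean, purely formal reduction to the already-stated Corollary \ref{cor-l1} (your entrywise check that $N+M_2$ with $M_2(k+2,l)=2$ is exactly the matrix covered there is right), and your base case $l=k+3$ and the $l=k+4$ cases for $L_2$, $L_3$ do split into disjoint $r$- and $c$-pieces as you say, so Propositions \ref{prop-r-form} and \ref{prop-02-sum} finish those. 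The price of your route shows up exactly where you flag it: for $L_2$ and $L_3$ there is no stated analogue of Corollary \ref{cor-l1}, so you must re-enter the ladder diagrams of Propositions \ref{prop-loupe2} and \ref{prop-loupe3} and confirm that the white edges $W^{l-1}_l,\dots,W^{k+4}_{k+5}$ sit consecutively near the bottom in the order needed to absorb the appended black edge(s) --- precisely the bookkeeping the paper's ``ignore $B^k_l$'' observation avoids, and no harder than what Corollary \ref{cor-l1} itself already does for $L_1$, but it does have to be done. In exchange, your version of the $L_1$ case is arguably more rigorous and citation-checkable than the paper's one-sentence appeal to the figures, and it packages the $L_2$/$L_3$ content as reusable extension lemmas; the paper's version is shorter, uniform over the three formations, and needs no case split on $l$.
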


\begin{proof}
Ignore the edges corresponding to $B^k_l$ in the proofs of Propositions \ref{prop-loupe}, \ref{prop-loupe2} and \ref{prop-loupe3}. 
\end{proof}

\noindent We call a formation of Definition \ref{def-l1}, \ref{def-l2}, \ref{def-l3} or Corollary \ref{cor-l1} or \ref{cor-l123} a {\it loupe formation}.

\subsection{T-structure}
\label{subsec-T}

In this subsection, we attempt to start a general discussion of the CN-realizable formation. 
We define and discuss the ``T-structure'' of a strictly upper triangular $(0,2)$-matrix. 

\begin{definition}
Let $M$ be an $n \times n$ strictly upper triangular $(0,2)$-matrix. 
Consider an $n \times n$ grid that has a vertex $V(i,j)$ in the square at the $i^{th}$ from the left-hand side and the $j^{th}$ from the top when $M(i,j)=2$ (see Figure \ref{fig-PO} (2)). 
We call the alignment of the vertices in the grid the {\it grid alignment of $M$} and denote it by $G(M)$. 
\end{definition}
\begin{figure}[ht]
\centering
\includegraphics[width=8cm]{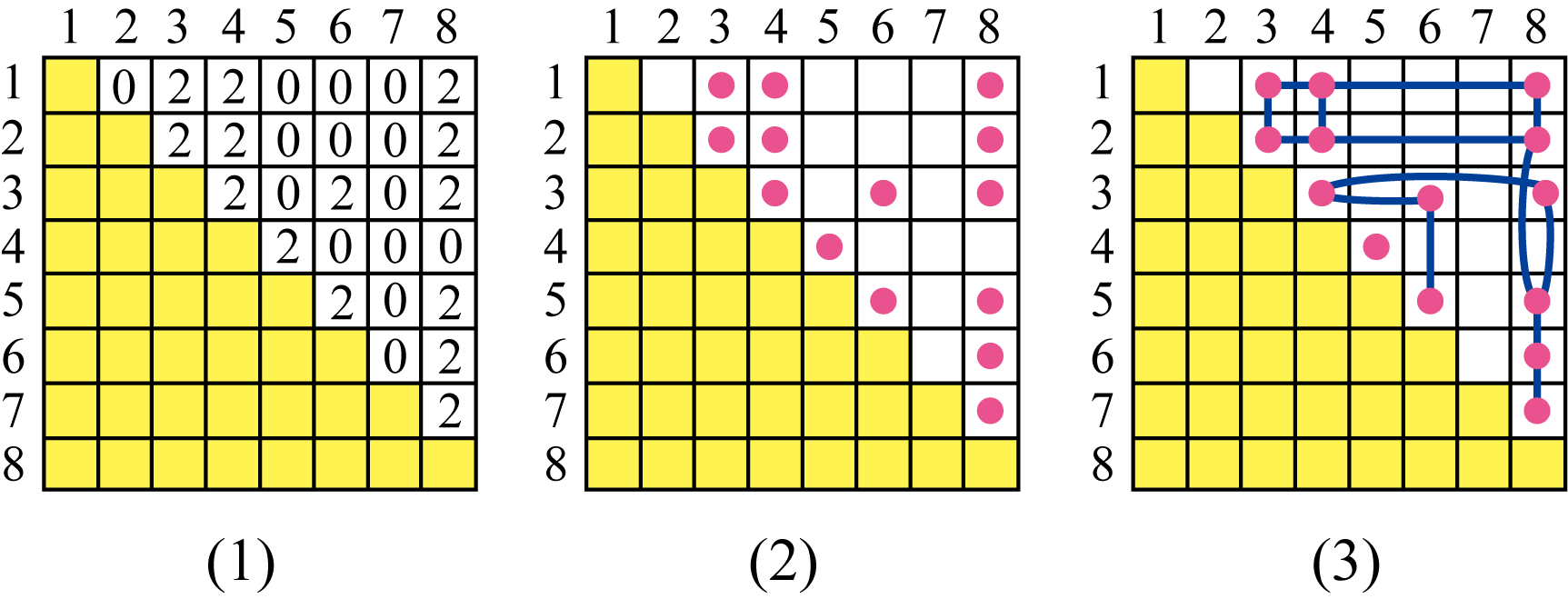}
\caption{(1): A $(0,2)$-matrix $M$. (2): The grid alignment $G(M)$ of $M$. (3): A graph $g$ on $G(M)$.}
\label{fig-PO}
\end{figure}

\begin{definition}
For a grid alignment $G(M)$, a {\it graph on $G(M)$} is a graph in the grid that is obtained from $G(M)$ by connecting some vertices by edges with the following three rules (see Figure \ref{fig-PO} (3)). 
\begin{itemize}
\item Each edge is vertical or horizontal; for each edge $E(V(i,j), V(k,l))$ connecting $V(i,j)$ and $V(k,l)$, $i=k$ or $j=l$. 
\item For each vertex $V(i,j)$, there is at most one vertex $V(i,k)$ on the left-hand side that shares an edge with $V(i,j)$; for each $V(i,j)$, the edge $E(V(i,j), V(i,k))$ with $k<j$ is unique if it exists. 
\item For each vertex $V(i,j)$, there is at most one vertex $V(l,j)$ below that shares an edge with $V(i,j)$; for each $V(i,j)$, the edge $E(V(i,j), V(l,j))$ with $i<l$ is unique if it exists. 
\end{itemize}
\label{def-graph-on}
\end{definition}

\begin{definition}
Let $g$ be a graph on a grid alignment $G(M)$. 
For a vertex $V(i,j)$, take the maximal sequence $V(i,j) \rightarrow V(i,j_1) \rightarrow V(i,j_2) \rightarrow \dots \rightarrow V(i,j_n)$ with $j=j_0>j_1 >j_2 > \dots > j_n$, where each pair of vertices $V(i, j_{l-1})$ and $V(i,j_l)$ share an edge. 
We call the sequence the {\it horizontal path for $V(i,j)$ with length $n$}. 
In the same way, take the maximal sequence $V(i,j) \rightarrow V(i_1,j) \rightarrow V(i_2,j) \rightarrow \dots \rightarrow V(i_m,j)$ with $i=i_0<i_1 <i_2 < \dots < i_m$, where each pair of vertices $V(i_{k-1}, j)$ and $V(i_k,j)$ share an edge. 
We call the sequence the {\it vertical path for $V(i,j)$ with length $m$}. 
\end{definition}

\noindent By the second and third conditions of Definition \ref{def-graph-on}, the vertical and horizontal paths are determined uniquely for each $V(i,j)$. 

\begin{example}
For the graph of Figure \ref{fig-PO} (3), the vertex $V(3,8)$ has the horizontal path $V(3,8) \rightarrow V(3,4)$ of length one and the vertical path $V(3,8) \rightarrow V(5,8) \rightarrow V(6,8) \rightarrow V(7,8)$ of length three. 
The lengths of the horizontal and vertical paths for $V(2,3)$ are zero. 
\end{example}

\begin{definition}
Let $g$ be a graph on a grid alignment $G(M)$. 
We say that $g$ has a {\it T-structure} if $g$ satisfies the following three conditions. 
\begin{itemize}
\item[(C1)] For each vertex $V(i,j)$ of $g$, let $m$, $n$ be the length of the vertical, horizontal paths for $V(i,j)$, respectively. Then $m+n=j-i-1$.
\item[(C2)] If there are three edges $E(V(i,k), V(i,j))$, $E(V(i,j), V(l,j))$, $E(V(i,k), V(m,k))$ ($k<j$, $i<l$, $i<m$), then $m=l$ and there also is the edge $E(V(l,k), V(l,j))$. 
\item[(C3)] If there are three edges $E(V(i,k), V(i,j))$, $E(V(i,j), V(l,j))$, $E(V(l,j), V(l,m))$ ($k<j$, $i<l$, $m<j$), then $m=k$ and there also is the edge $E(V(l,k), V(i,k))$. 
\end{itemize}
\begin{figure}[ht]
\centering
\includegraphics[width=10cm]{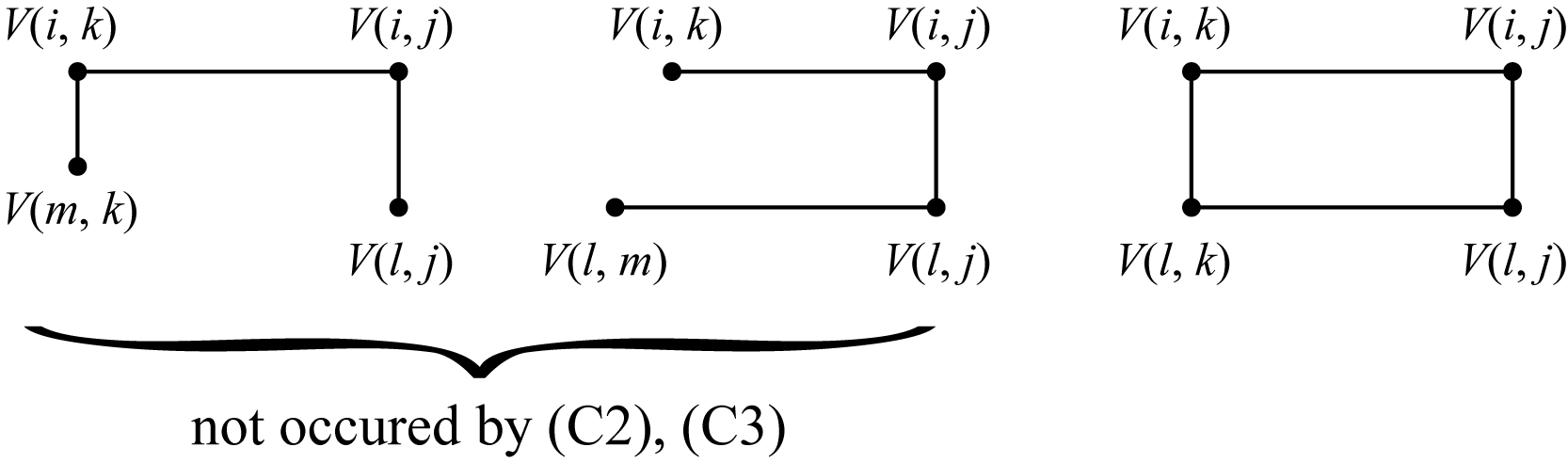}
\caption{On (C2) and (C3). }
\label{fig-C5}
\end{figure}
(See Figure \ref{fig-C5} for conditions C2, C3.) 
We say that a matrix $M$ has a T-structure if the grid $G(M)$ admits a graph on $G(M)$ that has a T-structure. 
\label{def-C1C2}
\end{definition}

\begin{example}
The graph in Figure \ref{fig-PO} (3) has a T-structure. 
The graph in Figure \ref{fig-PO-ex} fails to have a T-structure; 
the vertex $V(3,8)$ does not meet the condition (C1). 
The graph also has the undesired parts that are mentioned in the conditions (C2) and (C3). 
\begin{figure}[ht]
\centering
\includegraphics[width=2.5cm]{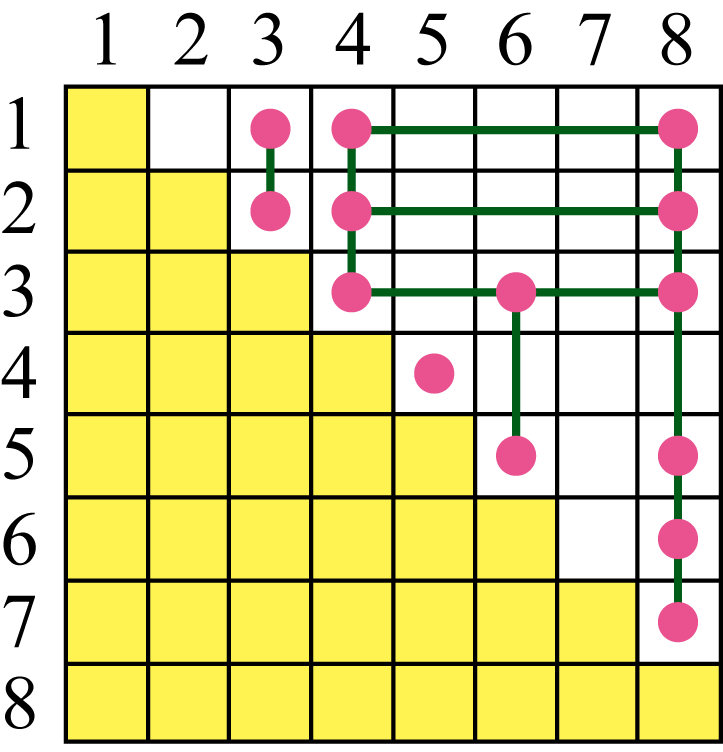}
\caption{A graph which fails to have a T-structure. }
\label{fig-PO-ex}
\end{figure}
\end{example}

\noindent By definition, all the matrices of a snake, hang-glider or loupe formation have a T-structure. 
We raise the following conjecture. 

\begin{conjecture}
Let $M$ be a strictly upper triangular $(0,2)$-matrix such that there is at most one pair $(i,j)$ that satisfies $j-i=k$ and $M(i,j)=2$ for each $k \geq 3$. 
If $M$ has a T-structure, then $M$ is a CN-realizable matrix\footnote{There also exsist CN-realizable matrices with no T-structure. 
For example, see the matrix $M$ in Figure \ref{fig-l-ex}. 
The vertex $V(1,4)$ of the grid alignment $G(M)$ needs to have a vertical path of length two since there are no vertices on the left-hand side. 
The vertex $V(2,6)$ needs to have the horizontal path of length two and the vertical path of length one. 
Then, the undesired part consisting of the vertices $V(3,4)$, $V(2,4)$, $V(2,6)$, and $V(5,6)$ is unavoidable, which is against the condition (C2). 
Hence, no T-structures are allowed for $M$.}. 
\end{conjecture}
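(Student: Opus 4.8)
The plan is to keep working with the BW-ladder machinery of Section~\ref{section-BW}: by Proposition~\ref{prop-W} it suffices, for each $M$ as in the statement, to exhibit a B-ladder diagram of $M$ that can be transformed into a W-ladder diagram by ladder moves. A first reduction splits off the connected components of the T-structure graph. Fix a graph $g$ on $G(M)$ with a T-structure, let $g_1,\dots,g_r$ be its connected components, and let $M_s$ be the $(0,2)$-matrix whose $2$-entries are exactly the vertices of $g_s$. Then $M_1,\dots,M_r$ have pairwise disjoint supports and $M=M_1+\dots+M_r$, each $M_s$ still satisfies the ``at most one $2$ per diagonal for $k\ge 3$'' hypothesis, and $g_s$ witnesses a T-structure for $M_s$ because the conditions (C1)--(C3) and the vertical and horizontal paths never leave a component. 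So, arguing by induction on the number of $2$-entries, each $M_s$ is CN-realizable when $r\ge 2$, and then so is $M$ by iterating Proposition~\ref{prop-02-sum}. It therefore remains to treat the case in which $g$ is connected.

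For connected $g$ I would argue by induction on the number of $2$-entries, peeling off the leftmost strand at each step. Let $k$ be the smallest index carrying a crossing; then strand $k$ occurs only in row $k$, so in a suitably ordered B-ladder diagram of $M$ its incident black edges form a top block. One resolves this top block into white edges: the maximal straight runs $B^k_{i}B^k_{i+1}\dots$ become palindromic blocks of white edges by Proposition~\ref{prop-BtoW}, and each remaining (longer) black edge incident to strand $k$ is then slid downward through those white blocks and past the other black edges by the moves L5--L9, being shortened by one unit at each corner until L1 applies, exactly as in the proofs of Propositions~\ref{prop-rc-form}, \ref{prop-hang} and~\ref{prop-loupe}. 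After this, the black edges that remain involve only strands $k+1,\dots,l$ and form the B-ladder diagram of a matrix $M'$ with strictly fewer $2$-entries, which inherits a T-structure (from $g$ with the row-$k$ vertices deleted) and the diagonal hypothesis; by the inductive hypothesis, together with Proposition~\ref{prop-small} to put $M'$ back into the ambient $n$ strands, those black edges can be turned into white edges, and the whole diagram becomes a W-ladder diagram for $M$.

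Three things make the hypotheses do their work, just as in Section~\ref{section-formation}. Condition (C1) is precisely the statement that the number of white edges accumulated to the left of (and below) a black edge $B^i_j$ equals the length $j-i-1$ by which $B^i_j$ must be shortened, so the slide-and-L1 procedure terminates. Conditions (C2) and (C3) say that wherever a horizontal path meets a vertical path the corner is ``filled in rectangularly'', which is what lets the slides of two different black edges through overlapping runs of white edges be carried out one after the other without obstruction. And the ``at most one $2$ per diagonal for $k\ge 3$'' hypothesis rules out two independent long hooks on the same level, which is precisely the configuration that --- as in the footnote discussing the matrix of Figure~\ref{fig-l-ex} --- forces a sub-grid violating (C2) and so cannot be resolved. (One could alternatively organize the connected case as a classification, showing that a connected sparse T-structure is, after the reverse operation of Proposition~\ref{prop-reverse} and an embedding by Proposition~\ref{prop-small}, one of the snake, hang-glider or small loupe formations, and quoting the corresponding one of Propositions~\ref{prop-r-form}, \ref{prop-rc-form}, \ref{prop-alpha}, \ref{prop-sharp}, \ref{prop-hang}, \ref{prop-loupe}, \ref{prop-loupe2}, \ref{prop-loupe3}.)

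The main obstacle is the global bookkeeping of the ladder moves in the peeling step. In each formation proof the transforming sequence is written out explicitly and its applicability conditions are checked against the figures; for a general sparse T-structure one instead needs a uniform argument that the black edges incident to strand $k$ can be slid out in some order so that every instance of L3 and L5--L9 invoked along the way meets its index condition at the moment it is applied, and that no white edge already created must later be ``uncrossed''. Concretely this means showing that the partial order on black edges coming from the T-structure (``$B^i_j$ above $B^{i'}_{j'}$ if $i<i'$ or $j<j'$'') is compatible with the order in which corners have to be resolved, and maintaining this as an invariant through the induction. I expect this --- rather than the reduction to connected components or the identification of the model shapes --- to be where the real work lies.
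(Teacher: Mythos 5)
First, a point of calibration: the statement you are proving is stated in the paper as a \emph{conjecture}, and the paper offers no proof of it --- only an example illustrating an intended ``resolve black edges in order of increasing length'' algorithm on a single matrix. So there is no proof of record to match, and your text should be judged as a proof on its own terms. On those terms it is not one: you yourself flag that the decisive step (that the ladder moves L3, L5--L9 invoked during the peeling can always be scheduled so that their index conditions hold and no white edge must be uncrossed) is unverified, and everything in Section~\ref{section-formation} suggests that this scheduling is exactly where all the difficulty lives.

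Beyond the admitted gap, the row-peeling induction has a concrete structural flaw. By condition (C1), a vertex $V(k,j)$ in the top row of a connected component must have its shortening budget $j-k-1$ split between its horizontal path (white edges coming from other entries of row $k$) and its vertical path (white edges coming from entries $M(i,j)$ with $i>k$, i.e.\ from rows you propose to defer to the inductive step on $M'$). Whenever the vertical path of $V(k,j)$ is nonempty --- as happens already for the $rc(k,l)$-formation, where $B^k_l$ is shortened in Proposition~\ref{prop-rc-form} using white edges produced from \emph{both} the row-$k$ block and the column-$l$ block --- the black edge $B^k_j$ cannot be reduced to length one before the lower rows are resolved. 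So after your ``top block'' phase a partially shortened black edge from row $k$ remains interleaved with the black edges of $M'$, and the remaining word is not the B-ladder diagram of $M'$; the induction does not close as stated. The component-splitting reduction and the preservation of (C1)--(C3) under deleting the top row are fine, but the induction must be organized by edge length (as in the paper's Example~\ref{fig-alg-ex}-style algorithm) or by some interleaved schedule, not row by row; and the parenthetical claim that every connected sparse T-structure is a snake, hang-glider or loupe formation is asserted without argument and is not plausible for large $n$. As it stands the proposal is a reasonable research plan for attacking the conjecture, not a proof of it.
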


\begin{example}
The matrix $M$ shown in Figure \ref{fig-alg-ex} has a T-structure and we can transform a B-ladder diagram of $M$ into a W-ladder diagram by ladder moves\footnote{Except for L4.} based on the T-structure in the following way. 
Take a B-ladder diagram of $M$ so that shorter black edges are lower than the longer edges, where the length of a black edge $B^k_l$ means the value $l-k$. 
Apply the ladder move L1 to the black edges of length one to convert them to white edges (the transformation $A$ in Figure \ref{fig-alg-ex}). 
Next, for the black edges $B^i_j$ of length 2, use the white edegs such that the corresponding vertices are on the vertical or horizontal path for $V(i,j)$ to shorten them (for example, use $W^1_2$ for $B^1_3$ in Figure \ref{fig-alg-ex}) and apply L1 to convert them to white edges (the transformation $B$ in Figure \ref{fig-alg-ex}). 
Repeat the procedure until the longest black edge is shortened and converted to white edges. 
\begin{figure}[ht]
\centering
\includegraphics[width=10cm]{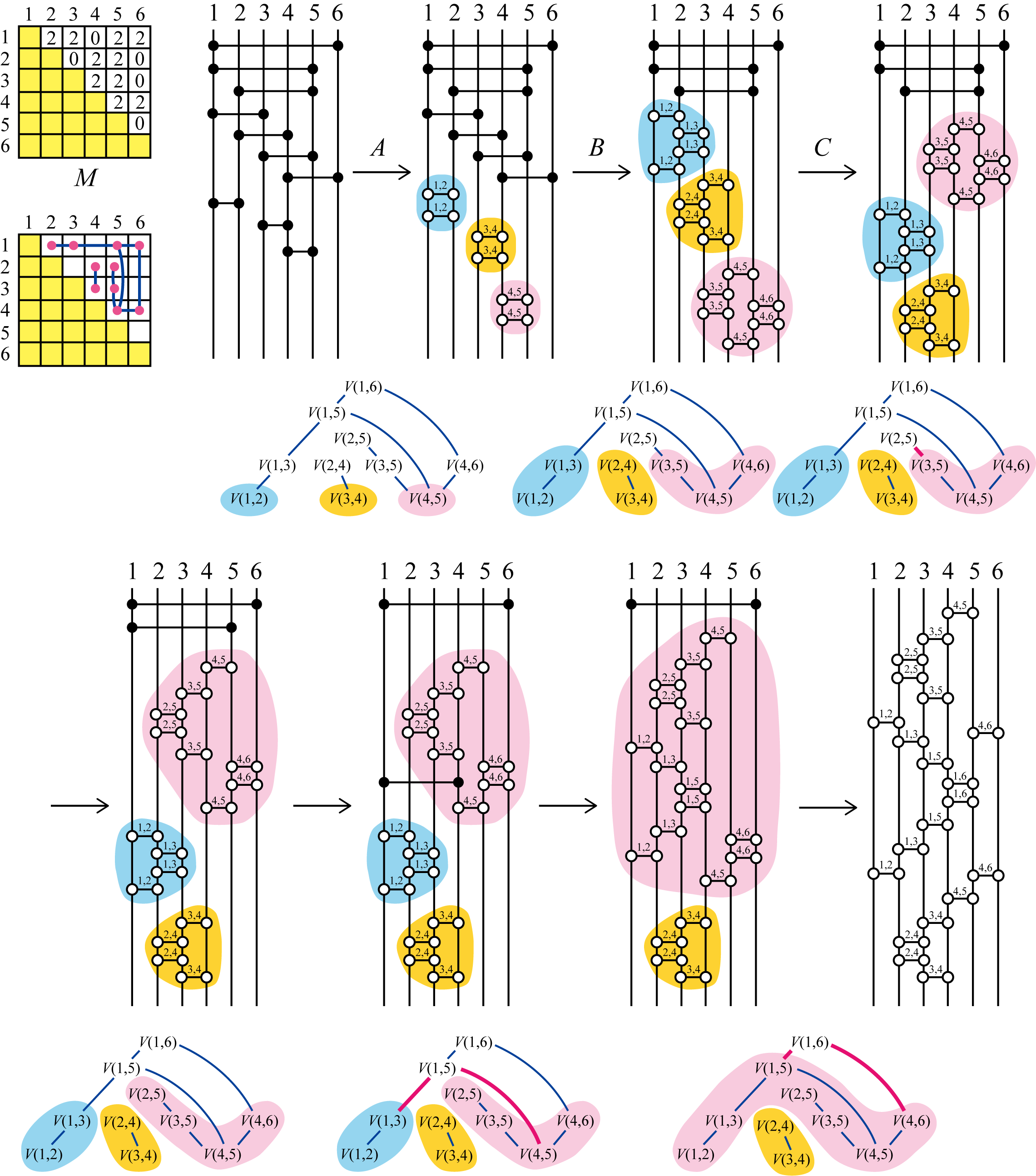}
\caption{The matrix $M$ is CN-realizable. For the transformation $C$, convert all the white edges highlighted in pink to black edges $B^3_5 B^4_6 B^4_5$ once, move them above the yellow- and blue-colored edges by ladder moves, and convert them again to the white edges. }
\label{fig-alg-ex}
\end{figure}
\end{example}

\section{CN-realizable configurations}
\label{section-conf}

In this section, we discuss the CN-realizable configurations. 

\begin{definition}
We say that a configuration $c$ of an $n \times n$ matrix is a {\it CN-realizable configuration} if any $n \times n$ matrix that has $c$ is CN-realizable. 
\end{definition}

\noindent By Proposition \ref{prop-reverse}, we have the following proposition. 

\begin{proposition}[\cite{AY-5}]
The reverse of a CN-realizable configuration is also a CN-realizable configuration. 
\end{proposition}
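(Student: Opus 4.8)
\emph{The reverse of a CN-realizable configuration is also a CN-realizable configuration.}

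The plan is to reduce this immediately to Proposition~\ref{prop-reverse}, which already says that if a matrix $M$ is CN-realizable then its reverse $M'$ is CN-realizable. The only gap to fill is to check that the operation ``take the reverse'' is compatible with the notion of a configuration (a specified pattern of entries, with the rest of the entries free) rather than just with a single matrix. So the first step is to pin down what it means for an $n\times n$ matrix to ``have'' a configuration $c$: a configuration prescribes the values of the entries in a fixed subset of positions, and a matrix has $c$ if its entries agree with $c$ on that subset. Then I would define the \emph{reverse configuration} $c'$ to be the configuration obtained by sending the prescribed position $(i,j)$ with value $v$ to the position $(n+1-j,\,n+1-i)$ with the same value $v$ --- this is exactly the positional bijection underlying the reverse operation on matrices (reversing the order of rows and columns). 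The key observation is the tautology that, for any $n\times n$ matrix $M$, the matrix $M$ has configuration $c$ if and only if the reverse matrix $M'$ has configuration $c'$; this is immediate from the definitions once one tracks indices through the reversal map $i\mapsto n+1-i$.

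With that in hand the proof is short. Suppose $c$ is a CN-realizable configuration, and let $M$ be an arbitrary $n\times n$ matrix having the reverse configuration $c'$. By the tautology above, the matrix $M'$ (the reverse of $M$) has configuration $c$, hence $M'$ is CN-realizable because $c$ is a CN-realizable configuration. Now apply Proposition~\ref{prop-reverse} to $M'$: since $M'$ is CN-realizable, its reverse $(M')' = M$ is CN-realizable. As $M$ was arbitrary among matrices having $c'$, the configuration $c'$ is CN-realizable. Finally one notes that ``the reverse of the configuration $c$'' in the statement is precisely $c'$, so we are done.

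I do not expect any serious obstacle here --- the content is entirely in Proposition~\ref{prop-reverse}, and what remains is bookkeeping about the index-reversal bijection. The one point that deserves care, and the closest thing to a ``hard part,'' is making sure the reversal map on positions is an involution compatible in both directions ($c \leftrightarrow c'$ and correspondingly $M \leftrightarrow M'$), so that the quantifier over ``all matrices having the configuration'' is handled correctly; once that symmetry is stated cleanly, the argument is a one-line application of the already-established matrix-level result.
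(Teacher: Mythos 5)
Your proof is correct and is exactly the argument the paper intends: the paper derives this proposition directly from Proposition~\ref{prop-reverse} (the matrix-level reversal statement), and your bookkeeping with the index-reversal involution and the quantifier over all matrices having the configuration is precisely the routine verification being left implicit there.
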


\begin{example}
The configurations $a1$ to $a4$ shown in Figure \ref{fig-m-a} are CN-realizable configurations for a $6 \times 6$ T0 upper triangular $(0,2)$-matrix. 
\begin{figure}[ht]
\centering
\includegraphics[width=6cm]{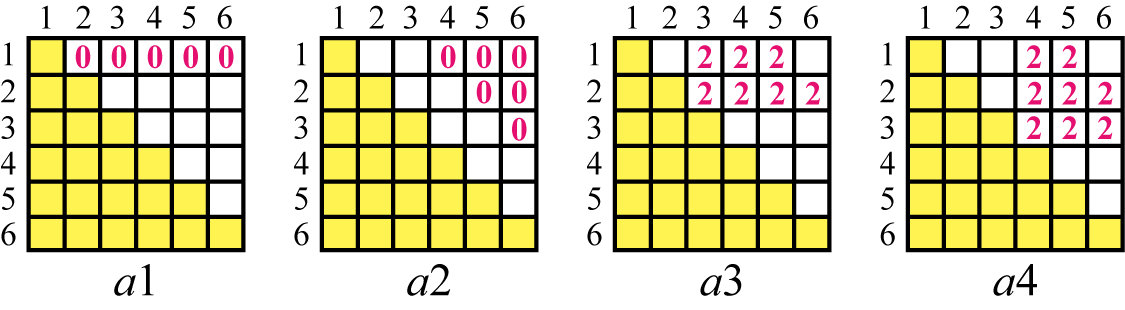}
\caption{The configurations $a1$ to $a4$. }
\label{fig-m-a}
\end{figure}
\label{ex-2}
\end{example}

\begin{proof}
For $a1$, it follows from Lemma \ref{lem-15}. 
For $a2$, it follows from Proposition \ref{prop-3zero}. 
For $a3$ and $a4$, it follows from Lemma \ref{lem-15}, Propositions \ref{prop-02-sum} and Figure \ref{fig-g}. 
For example, let $M$ be a T0 upper triangular $(0,2)$-matrix that has the configuration $a3$. 
Divide $M$ into three matrices as 
\begin{align*}
M=
\begin{bmatrix}
0 & * & 0 & 0 & 0 & 0 \\
0 & 0 & 0 & 0 & 0 & 0 \\
0 & 0 & 0 & 0 & 0 & 0 \\
0 & 0 & 0 & 0 & 0 & 0 \\
0 & 0 & 0 & 0 & 0 & 0 \\
0 & 0 & 0 & 0 & 0 & 0 \\
\end{bmatrix}
+
\begin{bmatrix}
0 & 0 & 0 & 0 & 0 & 0 \\
0 & 0 & 0 & 0 & 0 & 0 \\
0 & 0 & 0 & * & * & * \\
0 & 0 & 0 & 0 & * & * \\
0 & 0 & 0 & 0 & 0 & * \\
0 & 0 & 0 & 0 & 0 & 0 \\
\end{bmatrix}
+
\begin{bmatrix}
0 & 0 & 2 & 2 & 2 & * \\
0 & 0 & 2 & 2 & 2 & 2 \\
0 & 0 & 0 & 0 & 0 & 0 \\
0 & 0 & 0 & 0 & 0 & 0 \\
0 & 0 & 0 & 0 & 0 & 0 \\
0 & 0 & 0 & 0 & 0 & 0 \\
\end{bmatrix},
\end{align*}
where $*$ is either 0 or 2. 
The first and second matrices are T0 by the assumption, and CN-realizable by Lemma \ref{lem-15}. 
The third is CN-realizable by Figure \ref{fig-g}. 
Hence, $M$ is also CN-realizable by Proposition \ref{prop-02-sum}. 
\begin{figure}[ht]
\centering
\includegraphics[width=6cm]{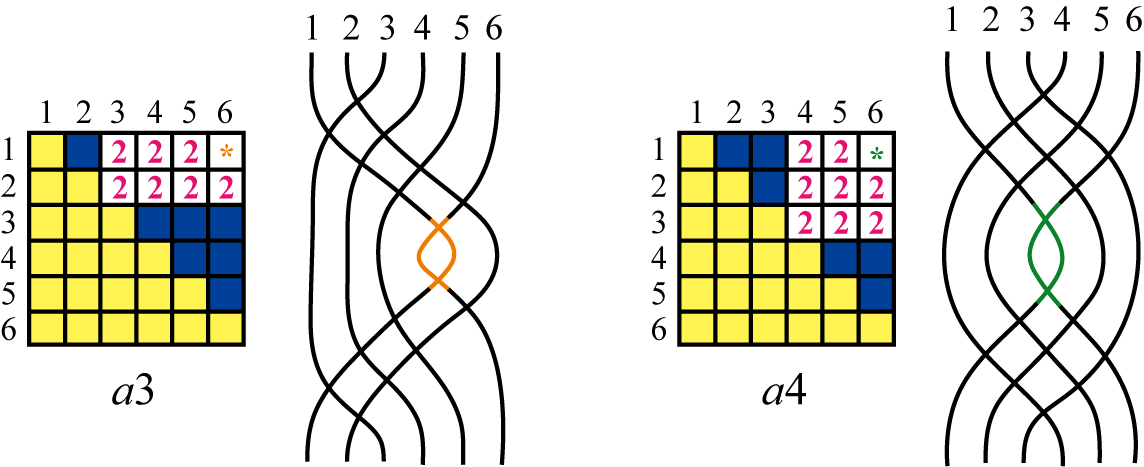}
\caption{The confugurations $a3$ and $a4$ are CN-realizable configurations. 
Unhook the $1^{st}$ and $6^{th}$ strands if the $(1,6)$ entry is 0.}
\label{fig-g}
\end{figure}
\end{proof}

\noindent The formations shown in Section \ref{section-formation} are useful for finding further CN-realizable configurations. 

\begin{example}
The configurations $b1, b2, b3$ shown in Figure \ref{fig-i} are CN-realizable configurations for a $6 \times 6$ T0 upper triangular $(0,2)$-matrix. 
\begin{figure}[ht]
\centering
\includegraphics[width=5cm]{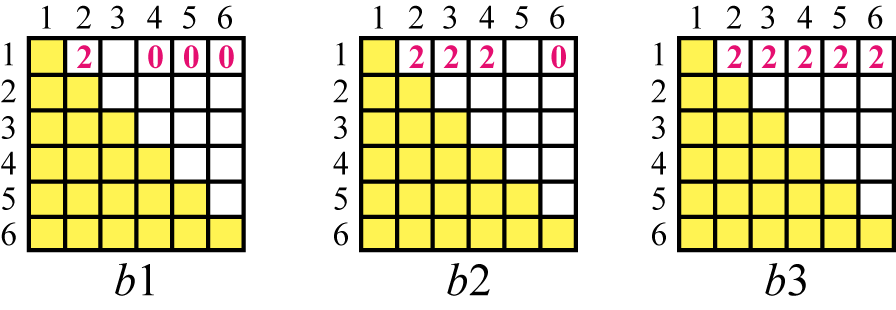}
\caption{The configurations $b1, b2, b3$. }
\label{fig-i}
\end{figure}
\label{ex-1}
\end{example}

\begin{proof}
It follows from Lemma \ref{lem-15}, Propositions \ref{prop-02-sum}, and \ref{prop-r-form}. 
For example, let $M$ be a T0 upper triangular $(0,2)$-matrix that has the configuration $b2$. 
Divide $M$ into two matrices as 
\begin{align*}
M=
\begin{bmatrix}
0 & 2 & 2 & 2 & * & 0 \\
0 & 0 & 0 & 0 & 0 & 0 \\
0 & 0 & 0 & 0 & 0 & 0 \\
0 & 0 & 0 & 0 & 0 & 0 \\
0 & 0 & 0 & 0 & 0 & 0 \\
0 & 0 & 0 & 0 & 0 & 0 \\
\end{bmatrix}
+
\begin{bmatrix}
0 & 0 & 0 & 0 & 0 & 0 \\
0 & 0 & * & * & * & * \\
0 & 0 & 0 & * & * & * \\
0 & 0 & 0 & 0 & * & * \\
0 & 0 & 0 & 0 & 0 & * \\
0 & 0 & 0 & 0 & 0 & 0 \\
\end{bmatrix}.
\end{align*}
The first matrix is CN-realizable by Proposition \ref{prop-r-form} as it forms the $r(1,k)$-formation for $k=4$ or 5. 
The second matrix is T0 by assumption, and CN-realizable by Lemma \ref{lem-15}. 
Hence, $M$ is also CN-realizable by Proposition \ref{prop-02-sum}. 
\end{proof}

\noindent In the same way, we obtain CN-realizable configurations derived from the snake, hang-glider and loupe formations as follows. 

\begin{proposition}
The configurations $c1$ to $c100$ shown in Figures \ref{fig-table1}, \ref{fig-table2}, \ref{fig-table3}, \ref{fig-table4}, \ref{fig-table5} are CN-realizable configurations for a $6 \times 6$ T0 upper triangular $(0,2)$-matrix.
\label{prop-CN-form}
\end{proposition}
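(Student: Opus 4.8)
The plan is to verify, one configuration at a time, that every $6\times 6$ T0 upper triangular $(0,2)$-matrix carrying the configuration $ci$ (for $i=1,\dots,100$) is CN-realizable, by reducing each such matrix to a sum of matrices already known to be CN-realizable. The core toolkit is: the snake formations (Propositions \ref{prop-r-form}, \ref{prop-rc-form}, \ref{prop-alpha}, \ref{prop-sharp}), the hang-glider formation (Proposition \ref{prop-hang}, Corollary \ref{cor-h}), the loupe formations (Propositions \ref{prop-loupe}, \ref{prop-loupe2}, \ref{prop-loupe3}, Corollaries \ref{cor-l1}, \ref{cor-l123}), the ``small block'' result Lemma \ref{lem-15}, the $3$-zero criterion Proposition \ref{prop-3zero}, and the closure under addition of CN-realizable $(0,2)$-matrices, Proposition \ref{prop-02-sum}. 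The strategy for each $ci$ mirrors Examples \ref{ex-2} and \ref{ex-1}: given a matrix $M$ realizing the configuration (with the free entries marked $*$ each being $0$ or $2$), split $M = M' + M''$ (or a three-fold sum) so that $M'$ is precisely a matrix of one of the named formations — which dictates the ``fixed'' entries of $ci$ — while $M''$ is supported on a set of $*$-entries that either forms a strictly-smaller principal-type block handled by Lemma \ref{lem-15}, lies in a band of width $\le 2$ handled by Proposition \ref{prop-3zero}, or itself realizes a smaller named formation. Then Proposition \ref{prop-02-sum} assembles the pieces. Because T0-ness of $M$ is inherited by the summands supported on sub-blocks (a zero row/column of $M$ in the relevant range forces the corresponding summand to have the zero pattern needed for T0), the decompositions are legitimate.

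The key steps, in order, are: (1) organize the $100$ configurations into families according to which formation supplies the ``backbone'' — an $r$/$c$-block, an $rc$-block, a $c$-$r$ or $c\#r$ block, an $H$-block, or an $L_1$/$L_2$/$L_3$-block — noting that the five figures \ref{fig-table1}--\ref{fig-table5} are arranged exactly this way; (2) for each configuration, read off from its diagram the backbone formation $M'$ and the residual support where the $*$'s live; (3) check that the residual support, after subtracting $M'$, is covered by one of Lemma \ref{lem-15} (block in rows $\ge I$, columns $\le I+m-1$), Proposition \ref{prop-3zero} (width-$\le 2$ band), or a second named formation on fewer strands; (4) invoke Proposition \ref{prop-02-sum} to conclude $M = M' + M''$ is CN-realizable; (5) where the tables list a configuration together with its reverse, cite the ``reverse of a CN-realizable configuration'' proposition to halve the work. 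Steps (1)--(5) are then applied uniformly across all $ci$; the figures themselves record which split is used in each case.

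The main obstacle is bookkeeping rather than conceptual: one must confirm for each of the $100$ diagrams that the chosen decomposition is actually valid — in particular that (a) the backbone piece really does satisfy the defining inequalities of its formation (e.g. $J<l$, $I>k$, $I-J\le 1$ for an $rc$-formation; $l-k\ge 2$, $m-l\ge 1$ for a $c\#r$-formation; the adjacency conditions $k<m<l$ for an $H$-formation), and (b) the residual piece is genuinely T0 and genuinely lands in the scope of Lemma \ref{lem-15}, Proposition \ref{prop-3zero}, or a smaller formation, with no stray $*$-entry left uncovered. A secondary subtlety is that some configurations have $*$-entries shared between the backbone and the residual block; there one uses the $M_1+M_2-M_3$ style subtraction already appearing in Definitions \ref{def-c-r} and in Proposition \ref{prop-sharp}, together with Proposition \ref{prop-M02} to absorb any entry that needs to be ``doubled'', so that the overlap is handled by a signed decomposition whose positive part is still a sum of $(0,2)$ CN-realizable matrices. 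Carrying this out case by case — with the figures serving as the certificates — completes the proof.
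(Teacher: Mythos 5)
Your proposal follows essentially the same route as the paper: the paper proves this proposition ``in the same way'' as Examples \ref{ex-2} and \ref{ex-1}, namely by splitting each matrix carrying a configuration $ci$ into a backbone piece of the named snake, hang-glider or loupe formation (as recorded in the captions of Figures \ref{fig-table1}--\ref{fig-table5}) plus residual blocks covered by Lemma \ref{lem-15} or Proposition \ref{prop-3zero}, and then assembling the pieces with Proposition \ref{prop-02-sum}. Your description of the decomposition strategy, the role of the figures as certificates, and the use of the reverse operation matches the paper's (largely implicit) argument.
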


\noindent We also obtain CN-realizable configurations using the ladder diagram. 

\begin{proposition}
The configurations $d1$ to $d21$ shown in Figures \ref{fig-d1} and \ref{fig-d2} are CN-realizable configurations for a $6 \times 6$ T0 upper triangular $(0,2)$-matrix. 
\label{prop-CN-ladder}
\end{proposition}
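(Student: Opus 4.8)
The plan is to verify CN-realizability of each configuration $d1$ through $d20$ by exhibiting, for every $6\times6$ T0 upper triangular $(0,2)$-matrix $M$ carrying that configuration, an explicit sequence of ladder moves transforming a chosen B-ladder diagram of $M$ into a W-ladder diagram; CN-realizability then follows from Proposition \ref{prop-W}. Since a configuration fixes some entries to $2$ and leaves the remaining ``wildcard'' entries free (each $0$ or $2$), the argument for a single configuration must in fact handle all $2^{(\#\text{wildcards})}$ matrices at once. As in the proofs of Examples \ref{ex-2} and \ref{ex-1}, I would arrange this by choosing the ordering of black edges in the B-ladder diagram so that the transformation to white edges proceeds uniformly in the wildcards: the ladder moves used to shorten and eliminate the forced black edges do not depend on whether a wildcard edge is present, and any wildcard black edge that is present is disposed of at the end (typically shortened to length one using nearby white edges already produced, then converted by L1, exactly as in Corollaries \ref{cor-h}, \ref{cor-l1}). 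The figures \ref{fig-d1} and \ref{fig-d2} are meant to record these sequences.

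The key steps, carried out once per configuration, are: (i) read off from the configuration the set of forced $2$-entries and the positions of the wildcards; (ii) pick a B-ladder diagram ordering the black edges by length (shortest at the bottom) or by the lexicographic rule ``$B^a_b$ above $B^c_d$ if $a<c$ or $b<d$'' used in Proposition \ref{prop-rc-form}, whichever makes the forced edges reducible; (iii) apply L1 to all length-one black edges, then repeatedly use the resulting white edges together with L5--L9 and L3 to shorten each remaining black edge by one unit at a time until it has length one, then apply L1 again — this is the same inductive ``peeling'' used in Proposition \ref{prop-loupe} and in the T-structure example around Figure \ref{fig-alg-ex}; (iv) handle the wildcard edges last, as above. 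Where a configuration's forced part is exactly one of the snake, hang-glider, or loupe formations of Section \ref{section-formation}, one may simply cite Propositions \ref{prop-r-form}--\ref{prop-loupe3} (or their corollaries) together with Proposition \ref{prop-02-sum} to split $M$ as a sum of a formation matrix and a smaller CN-realizable matrix (via Lemma \ref{lem-15}), and no new ladder-move bookkeeping is needed; the genuinely new configurations are those whose forced part is not a formation, and for those the ladder-move sequence must be produced by hand.

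The main obstacle will be step (iii) for the handful of configurations whose forced $2$-entries do not fit any formation from Section \ref{section-formation} and cannot be split off by Proposition \ref{prop-02-sum}: there the sequence of ladder moves has to be found directly, and one must check that the ``peeling'' never gets stuck — i.e.\ that at each stage the white edges needed to shorten the current longest black edge (those indexed by the vertical/horizontal path in the associated grid alignment) are actually available in the right cyclic position, possibly after commuting them past other black edges with L5 and past other white edges with L3 (and, if unavoidable, L4). Because $n=6$, the black edges have length at most $5$ and each configuration involves only a few of them, so this search terminates; the work is in confirming that the chosen ordering in step (ii) makes every required shortening move legal, and in verifying that the wildcard edges, wherever they sit, can always be absorbed at the end without disturbing the white edges already created. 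Once all twenty configurations are checked, Proposition \ref{prop-W} gives the conclusion.
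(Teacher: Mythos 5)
Your proposal matches the paper's approach: the proof of Proposition \ref{prop-CN-ladder} consists precisely of exhibiting, for each of the twenty configurations, a B-ladder diagram together with a sequence of ladder moves turning it into a W-ladder diagram (recorded in Figures \ref{fig-d1} and \ref{fig-d2}), with CN-realizability then following from Proposition \ref{prop-W} (and Proposition \ref{prop-02-sum}/Lemma \ref{lem-15} where the configuration splits off a formation). The only thing your write-up leaves implicit is the actual case-by-case move sequences, which is exactly the content the paper delegates to the figures.
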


\section{Proof of Theorem \ref{thm-true6}}
\label{section-66}

We discuss Conjecture \ref{conj-CN} for $n \leq 6$ in Section \ref{sub-sec-66}, Conjecture \ref{conj-OU} in Section \ref{sub-sec-OU} and Conjecture \ref{conj-C} in Section \ref{sub-sec-CM} to prove Theorem \ref{thm-true6}.

\subsection{CN-realizable $6 \times 6$ matrix}
\label{sub-sec-66}

In this subsection, we give a positive answer to Conjecture \ref{conj-CN} when $n \leq 6$.

\begin{proposition}
A $6 \times 6$ upper triangular $(0,2)$-matrix $M$ is CN-realizable if and only if $M$ is T0. 
\label{prop-02-CN6}
\end{proposition}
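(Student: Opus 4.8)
The plan is to establish both directions of the equivalence, with the forward direction being immediate from the general theory already developed. If $M$ is CN-realizable, then $M$ is T0 by Proposition \ref{prop-T0}, so this direction requires nothing new. The substance is the converse: every $6 \times 6$ T0 upper triangular $(0,2)$-matrix is CN-realizable. The strategy is an exhaustive case analysis organized by the pattern of nonzero entries, leveraging the CN-realizable configurations collected in Section \ref{section-conf} (the families $a1$--$a4$, $b1$--$b3$, $c1$--$c100$, $d1$--$d20$) together with the decomposition tools, Proposition \ref{prop-02-sum} (sum of CN-realizable $(0,2)$-matrices is CN-realizable) and Proposition \ref{prop-small}/Lemma \ref{lem-15} (matrices supported on a small block are CN-realizable).

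First I would stratify the cases by the long-range entries, i.e.\ the values $M(1,4), M(1,5), M(1,6), M(2,5), M(2,6), M(3,6)$, since these are the entries that obstruct a direct appeal to Proposition \ref{prop-3zero}. If all of $M(i,j)$ with $j-i \geq 3$ vanish, then $M$ is CN-realizable immediately by Proposition \ref{prop-3zero}. Otherwise, I would split into subcases according to which of the six far entries are nonzero; up to the reverse symmetry (Proposition \ref{prop-reverse}) this cuts the count substantially. In each subcase the T0 condition forces a chain of further entries to be $2$: for instance $M(1,6)=2$ forces, for every $3 \leq k \leq n$, that not both $M(1,k)$ and $M(k,6)$ vanish when combined with intermediate triples, so the ``staircase'' structure around a long edge is constrained. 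I would then match the resulting shape against one of the tabulated configurations, or express $M$ as a sum $M_1 + M_2$ where $M_1$ is a snake/hang-glider/loupe/$r$/$c$-formation (Propositions \ref{prop-r-form}, \ref{prop-rc-form}, \ref{prop-alpha}, \ref{prop-sharp}, \ref{prop-hang}, \ref{prop-loupe}, \ref{prop-loupe2}, \ref{prop-loupe3}) capturing the long edge and its forced support, and $M_2$ is T0 and supported on a proper block, handled by Lemma \ref{lem-15}; then Proposition \ref{prop-02-sum} finishes.

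The main obstacle will be \emph{completeness of the case division}: one must be certain that every T0 pattern of the six far entries, after accounting for the entries the T0 condition forces, actually falls under one of the named formations or tabulated configurations $c1$--$c100$, $d1$--$d20$. The delicate cases are those with several simultaneous long edges, e.g.\ $M(1,5)=M(2,6)=2$ or $M(1,6)=M(2,5)=2$, where the forced support is large and a single formation may not suffice; there one needs a genuine two-term (occasionally three-term) decomposition, and checking that each summand is individually T0 requires care, since T0-ness is not preserved under arbitrary splitting. I would organize this as a finite checklist: enumerate the $2^6$ sign patterns of the far entries, discard those violating T0 or reducible by reverse symmetry, and for each survivor record the forced entries and cite the configuration or decomposition that applies. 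The hang-glider and loupe formations are specifically designed for the hardest patterns (a long $(1,6)$ edge with branching in the middle), so I expect the checklist to close, but the bookkeeping is where the real work lies.
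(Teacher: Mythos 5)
Your strategy is the same as the paper's: the forward direction is exactly Proposition \ref{prop-T0}, and the converse is settled by showing that every $6\times 6$ T0 upper triangular $(0,2)$-matrix contains one of the CN-realizable configurations of Section \ref{section-conf} (or a reverse of one), using Propositions \ref{prop-02-sum}, \ref{prop-small} and Lemma \ref{lem-15} to assemble decompositions. The difference is in how the exhaustive check is closed: the paper enumerates all $4{,}824$ T0 matrices by computer and verifies that each one is eliminated by some configuration from Examples \ref{ex-2}, \ref{ex-1} and Propositions \ref{prop-CN-form}, \ref{prop-CN-ladder}, whereas you propose a by-hand stratification over the $2^6$ patterns of the far entries $M(i,j)$, $j-i\geq 3$. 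You have correctly located where the difficulty sits (completeness of the case division, and the fact that a summand in a decomposition need not inherit T0-ness), but your write-up explicitly stops at ``I expect the checklist to close,'' and that checklist \emph{is} the entire content of the hard direction: the library of over a hundred configurations $c1$--$c100$, $d1$--$d20$ exists precisely because simpler families do not cover all cases, and several of those configurations were found only by direct ladder-diagram manipulation rather than by decomposition. So the outline is the right approach but is not yet a proof; to complete it one must either carry out the full case analysis (substantially more bookkeeping than the $2^6$ top-level split suggests, since each far-entry pattern still branches on the short-range entries) or, as the paper does, delegate the verification to a computer.
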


\begin{proof}
If an upper triangular $(0,2)$-matrix $M$ is CN-realizable, then $M$ is T0 by Proposition \ref{prop-T0}. 
All the $6 \times 6$ upper triangular T0 $(0,2)$-matrices, whose number is 4,824, have at least one of the CN-realizable configurations or their reverses of Examples \ref{ex-2}, \ref{ex-1} or Propositions \ref{prop-CN-form}, \ref{prop-CN-ladder}. 
It was confirmed by eliminating the matrices that have the CN-realizable configurations from the list of the 4,824 T0 matrices by computer\footnote{In the previous version of the paper, another set of the CN-realizable configurations was used to prove the proposition. See \url{https://arxiv.org/pdf/2506.08659v1}.}. 
\end{proof}

\begin{remark}
The number of the upper triangular T0 $(0,2)$-matrices was calcurated by the computer, and the following table shows the numbers up to $7 \times 7$ matrices.  \\

\begin{center}
\begin{tabular}{ccccccc}\hline
1 & 2 & 3 & 4 & 5 & 6 & 7 \\ \hline
1 & 2 & 8 & 64 & 1,024 & 32,768 & 2,097,152 \\
1 & 2 & 7 & 40 & 357 & 4,824 & 96,428 \\ \hline
\end{tabular}
\end{center}

\noindent The first row indicates the size $n$ of matrix, the second row indicates the number of zero-diagonal symmetric $(0,2)$-matrices, which is $2^{n(n-1)/2}$, and the third row indicates the number of zero-diagonal symmetric T0 $(0,2)$-matrices. 
\end{remark}

\begin{proposition}
A non-negative integer $6 \times 6$ matrix $M$ is the CN matrix of some pure $6$-braid projection if and only if $M$ is an even symmetric T0 matrix. 
\label{prop-CN6}
\end{proposition}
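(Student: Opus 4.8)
The plan is to reduce the general even-entry case to the $(0,2)$-case already settled in Proposition \ref{prop-02-CN6}. First I would establish the easy direction: if $M = N(B)$ for a pure $6$-braid projection $B$, then $M$ is symmetric and zero-diagonal by definition of the CN matrix, its entries are even by Proposition \ref{prop-pure-even}, and $M$ is T0 by Proposition \ref{prop-T0}. So every CN matrix of a pure $6$-braid projection is an even symmetric T0 matrix.

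For the converse, suppose $M$ is a $6 \times 6$ even symmetric T0 matrix. Since $M$ is symmetric and zero-diagonal, I may represent it by its strictly upper triangular part without loss of information, exactly as the paper does. Form the associated $(0,2)$-matrix $M^{02}$ by replacing every nonzero entry of (the upper-triangular part of) $M$ with $2$. The key observation is that $M^{02}$ is again T0: the T0 condition only refers to which entries are zero versus nonzero, and this zero/nonzero pattern is unchanged by the replacement. By Proposition \ref{prop-02-CN6}, $M^{02}$ is therefore CN-realizable. Now Proposition \ref{prop-M02} applies: since $M^{02}$ is CN-realizable and $M$ is obtained from a strictly upper triangular matrix with nonnegative even entries whose $(0,2)$-reduction is $M^{02}$, it follows that $M$ itself is CN-realizable by a pure braid projection. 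This gives a pure $6$-braid projection $B$ with $N(B) = M$, completing the converse.

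I do not expect a serious obstacle here, since this proposition is essentially a bookkeeping corollary of Propositions \ref{prop-02-CN6} and \ref{prop-M02}. The only point requiring any care is to confirm that passing from $M$ to $M^{02}$ preserves the T0 property — which is immediate from the definition — and to make sure the hypotheses of Proposition \ref{prop-M02} are stated in terms of strictly upper triangular matrices with nonnegative even entries, matching the normalization we adopted. Everything else is a direct citation of earlier results.
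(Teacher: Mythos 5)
Your proposal is correct and follows the same route as the paper: the forward direction via Propositions \ref{prop-pure-even} and \ref{prop-T0}, and the converse by reducing to the $(0,2)$-matrix $M^{02}$ and citing Propositions \ref{prop-02-CN6} and \ref{prop-M02}. The only difference is that you make explicit the (immediate) observation that the T0 property survives the passage to $M^{02}$, which the paper leaves implicit.
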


\begin{proof}
If a matrix $M$ is CN-realizable by a pure braid projection, then $M$ is symmetric by definition and $M$ is a non-negative even T0 matrix by Propositions \ref{prop-pure-even} and \ref{prop-T0}. 
By Propositions \ref{prop-M02} and \ref{prop-02-CN6}, all the $6 \times 6$ non-negative even T0 symmetric matrix are CN-realizable by a pure braid projection. 
\end{proof}

\subsection{OU matrix}
\label{sub-sec-OU}

In this subsection, we give a positive answer to Conjecture \ref{conj-OU} when $n \leq 6$. 
The following proposition was shown in \cite{AY-5} for $N=5$. 

\begin{proposition}
If Conjecture \ref{conj-CN} is true for $n \leq N$, then Conjecture \ref{conj-OU} is true for $n \leq N$. 
\label{prop-3to2}
\end{proposition}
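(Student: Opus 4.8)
The plan is to show how a pure $n$-braid projection realizing a prescribed even T0 symmetric matrix can be upgraded to a braid \emph{diagram} (with over/under information) whose OU matrix is a prescribed non-negative integer matrix $M$ with $M + M^T$ equal to that even T0 matrix. So suppose Conjecture \ref{conj-CN} holds for all $n \le N$, fix $n \le N$, and let $M$ be an $n \times n$ non-negative integer matrix such that $M + M^T$ is an even T0 matrix. The necessity direction is already known: $U(b) + U(b)^T = N(B)$ for the underlying projection $B$, and $N(B)$ is an even T0 symmetric matrix by Propositions \ref{prop-pure-even} and \ref{prop-T0} whenever $b$ is pure; and $b$ is pure exactly when $B$ is (this is where Proposition \ref{prop-pure-even} is used). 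So the content is the ``if'' direction.

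\textbf{Key steps.} First, set $N := M + M^T$; by hypothesis this is an even T0 symmetric matrix, so by Conjecture \ref{conj-CN} (assumed true for $n \le N$) there is a pure $n$-braid projection $B$ with $N(B) = N$. Second, I would lift $B$ to a braid diagram $b$ by assigning over/under information to each crossing strand-pair. The point is that the crossings between strands $s_i$ and $s_j$ come in a prescribed number $N(i,j)$, which is even, and I want exactly $M(i,j)$ of them to have $s_i$ over $s_j$ and $M(j,i) = N(i,j) - M(i,j)$ of them to have $s_j$ over $s_i$. The natural mechanism is the addition formula: by Proposition \ref{prop-pure-sum}, $N$ decomposes as a sum over the ``elementary'' pure braid projections (each contributing two crossings between one pair of strands), so one can build $B$ as a product of such blocks, then choose the over/under assignment block-by-block so that the running tally of over-crossings of $s_i$ above $s_j$ reaches precisely $M(i,j)$. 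Third, verify $U(b) = M$: every crossing of $b$ lies between some pair $s_i, s_j$, and by construction the count with $s_i$ over $s_j$ is $M(i,j)$, so $U(b)(i,j) = M(i,j)$ as desired, and $b$ is pure since $B$ is.

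\textbf{Main obstacle.} The delicate point is controlling \emph{which} pairs of strands realize the two crossings in each elementary block, and ensuring that as blocks are composed one really can hit the target $M(i,j)$ for \emph{every} off-diagonal pair simultaneously rather than just in aggregate — i.e., that the projection $B$ produced by Conjecture \ref{conj-CN} can be taken in a ``decomposed'' form (a product of blocks, each of which creates exactly two crossings between a single specified pair of strands) so that Proposition \ref{prop-pure-sum} applies cleanly. If the realizing projection from Conjecture \ref{conj-CN} is not automatically in such a form, one needs an intermediate lemma: every even T0 symmetric matrix that is CN-realizable by a pure braid projection is CN-realizable by a pure braid projection that is a product of elementary two-crossing blocks (one per unit of $\tfrac12 N(i,j)$). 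This is plausible from the $(0,2)$-reduction machinery — Proposition \ref{prop-M02} already reduces to $(0,2)$-matrices, and a $(0,2)$ CN-realizable matrix built from BW-ladder diagrams has its crossings organized pair-by-pair — but making the block structure explicit, together with the bookkeeping that the over/under choices within the blocks can be made freely, is where the real work lies. Once that structural refinement is in hand, the assignment of over/under data is routine and the proposition follows; for $N = 5$ this is exactly the argument of \cite{AY-5}, and nothing in it is special to $5$.
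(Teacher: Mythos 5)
Your proposal follows the same route as the paper: necessity via $U(b)+U(b)^T=N(B)$ together with Propositions \ref{prop-pure-even} and \ref{prop-T0}, and sufficiency by invoking Conjecture \ref{conj-CN} to realize $M+M^T$ as the CN matrix of a pure projection $B$ and then lifting $B$ to a diagram with the prescribed OU matrix. The one substantive issue is that the ``main obstacle'' you identify is not actually an obstacle, and the intermediate lemma you leave unproved (decomposing $B$ into elementary two-crossing blocks via Proposition \ref{prop-pure-sum}) is not needed. Over/under information in a braid diagram is chosen independently at each crossing, and each crossing of $B$ lies between exactly one pair of strands; so among the $N(B)(i,j)=M(i,j)+M(j,i)$ crossings between $s_i$ and $s_j$ you may simply declare any $M(i,j)$ of them to have $s_i$ over $s_j$ and the rest to have $s_j$ over $s_i$, and these choices for distinct pairs do not interact. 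Hitting every entry of $M$ simultaneously is therefore automatic, with no constraint on how $B$ was built; this is precisely the content of Corollary~1 of \cite{AY-5}, which the paper cites at this point. Deleting the block-decomposition detour leaves a complete and correct proof identical in substance to the paper's.
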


\begin{proof}
Let $M$ be an $n\times n$ non-negative matrix. 
Suppose that $M$ is the OU matrix of a pure $n$-braid diagram $b$. 
By the definitions of the CN matrix and the OU matrix, $M+M^T$ is the CN matrix of the projection of $b$. 
By Proposition \ref{prop-T0}, $M+M^T$ is an even T0 matrix. 
Conversely, suppose that $M+M^T$ is an even T0 matrix. 
By the assumption that Conjecture \ref{conj-CN} is true for $n\leq N$, $M+M^T$ is the CN matrix of some pure $n$-braid projection $B$ for $n\leq N$. 
By Corollary 1 in \cite{AY-5}, $M$ is the OU matrix of some pure $n$-braid diagram whose projection is $B$ for $n\leq N$. 
This completes the proof of Proposition \ref{prop-3to2}.
\end{proof}

\noindent From Propositions \ref{prop-CN6} and \ref{prop-3to2}, we have the following corollary, which is an answer to Conjecture \ref{conj-OU} for $n \leq 6$. 

\begin{corollary}
A $6 \times 6$ non-negative integer matrix $M$ is the OU matrix of some pure $6$-braid diagram if and only if $M+M^T$ is an even T0 matrix. 
\label{cor-OU6}
\end{corollary}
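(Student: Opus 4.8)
The statement to prove is Corollary \ref{cor-OU6}: a $6 \times 6$ non-negative integer matrix $M$ is the OU matrix of some pure $6$-braid diagram if and only if $M + M^T$ is an even T0 matrix.

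The plan is to derive this immediately from the two results that precede it in the excerpt, namely Proposition \ref{prop-CN6} (the $n=6$ case of Conjecture \ref{conj-CN}, already proven in Section \ref{sub-sec-66}) and Proposition \ref{prop-3to2} (the general implication ``Conjecture \ref{conj-CN} true for $n \le N$ $\Rightarrow$ Conjecture \ref{conj-OU} true for $n \le N$''). So the proof is essentially a one-line invocation: take $N = 6$ in Proposition \ref{prop-3to2}, using Proposition \ref{prop-CN6} to supply the hypothesis.

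Since Proposition \ref{prop-3to2} is stated just above and its proof is already given, the only content of the corollary's proof is to spell out the specialization. First I would note the forward direction: if $M$ is the OU matrix of a pure $6$-braid diagram $b$, then by the definitions of the OU and CN matrices, $M + M^T = N(B)$ where $B$ is the underlying projection of $b$; since $N(B)$ is CN-realizable, Proposition \ref{prop-T0} forces it to be T0, and it is even by Proposition \ref{prop-pure-even} (or just by $N(B) = U(b) + U(b)^T$). Conversely, if $M + M^T$ is an even T0 matrix, then it is in particular a non-negative even symmetric T0 matrix, so by Proposition \ref{prop-CN6} it is the CN matrix of some pure $6$-braid projection $B$; then by the cited Corollary~1 of \cite{AY-5} — which recovers an OU matrix from a CN matrix with prescribed projection — $M$ is the OU matrix of some pure $6$-braid diagram with projection $B$.

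There is essentially no obstacle here: the work has all been done in Proposition \ref{prop-CN6} (whose own proof rests on the computer verification of Proposition \ref{prop-02-CN6}) and in Proposition \ref{prop-3to2}. The only thing to be careful about is that Proposition \ref{prop-3to2} is phrased in terms of Conjectures \ref{conj-CN} and \ref{conj-OU}, so I should make explicit that Proposition \ref{prop-CN6} is precisely the assertion that Conjecture \ref{conj-CN} holds for $n \le 6$, and that Corollary \ref{cor-OU6} is precisely the assertion that Conjecture \ref{conj-OU} holds for $n \le 6$ (the smaller cases $n \le 5$ being already covered by \cite{AY-5}). Accordingly the proof I would write is simply:

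\begin{proof}
By Proposition \ref{prop-CN6}, Conjecture \ref{conj-CN} holds for $n \le 6$. Hence by Proposition \ref{prop-3to2} (applied with $N = 6$), Conjecture \ref{conj-OU} holds for $n \le 6$, which is exactly the assertion of the corollary.
\end{proof}
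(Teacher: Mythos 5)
Your proof is correct and is exactly the paper's argument: the paper derives Corollary \ref{cor-OU6} directly from Propositions \ref{prop-CN6} and \ref{prop-3to2} with $N=6$, just as you do (with Theorem \ref{thm-55} covering $n\le 5$ so that the hypothesis of Proposition \ref{prop-3to2} holds for all $n\le 6$). Your additional unwinding of the forward and backward directions merely restates the proof of Proposition \ref{prop-3to2} and is not needed.
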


\subsection{Crossing matrix}
\label{sub-sec-CM}

In this subsection, we give a positive answer to Conjecture \ref{conj-C} when $n \leq 6$ and prove Theorem \ref{thm-true6}. 
The following proposition was shown in \cite{AY-5} for $N=5$. 

\begin{proposition}
If Conjecture \ref{conj-CN} is true for $n \leq N$, then Conjecture \ref{conj-C} is true for $n \leq N$. 
\label{prop-3to1}
\end{proposition}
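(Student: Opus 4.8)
The plan is to mirror the structure of the proof of Proposition \ref{prop-3to2}, replacing the OU‑matrix bookkeeping with the crossing‑matrix bookkeeping. Recall the three relevant facts already available: Proposition \ref{prop-C-OU}, which says $U(b)=C(b)$ whenever $b$ is a positive diagram; the identity $N(B)=U(b)+U(b)^T$ for any diagram $b$ with projection $B$; and Proposition \ref{prop-T0}, which forces a CN‑realizable matrix to be T0. I would also invoke Proposition \ref{prop-C-OU} together with the definition of the crossing matrix to record that for a \emph{positive pure} braid diagram $b$, the matrix $C(b)=U(b)$ is a non‑negative integer matrix, and that $C(b)+C(b)^T=N(B)$ is symmetric, hence $C(b)$ itself need not be symmetric a priori; but Proposition \ref{prop-T0-set} (or a direct argument) shows the crossing matrix of a pure braid is symmetric, so in fact $N(B)=2C(b)$ in the positive pure case.

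First I would prove the forward direction. Suppose $M$ is the crossing matrix of some positive pure $n$‑braid diagram $b$ with $n\le N$. Since $b$ is positive, $M=C(b)=U(b)$ by Proposition \ref{prop-C-OU}, so $M$ is a non‑negative integer matrix; it is zero‑diagonal by definition. Because $b$ is pure its crossing matrix is symmetric (Proposition \ref{prop-T0-set}), and therefore $N(B)=U(b)+U(b)^T=M+M^T=2M$, where $B$ is the projection of $b$. By Proposition \ref{prop-T0}, $N(B)=2M$ is T0; since the T0 condition is insensitive to multiplying every entry by a nonzero scalar, $M$ is T0 as well. Thus $M$ is a non‑negative integer T0 symmetric matrix, which is the ``only if'' half of Conjecture \ref{conj-C}.

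Next the converse. Suppose $M$ is a non‑negative integer T0 symmetric matrix with $n\le N$. Then $2M$ is an even T0 symmetric matrix, so by the hypothesis that Conjecture \ref{conj-CN} holds for $n\le N$, there is a pure $n$‑braid projection $B$ with $N(B)=2M$. Now I would appeal to the same lifting machinery used in \cite{AY-5} for the OU case: one realizes $B$ by a pure braid diagram in which every crossing between strands $s_i$ and $s_j$ is taken to be positive with $s_i$ over $s_j$ for $i<j$, i.e.\ one chooses the over/under information so that the resulting diagram $b$ is positive (this is possible precisely because a consistent choice of ``the lower‑indexed strand passes over'' can be made for a pure braid, up to isotopy; alternatively, quote Corollary 1 of \cite{AY-5} with the OU matrix set to the strictly‑upper‑triangular half $M$ of $N(B)=M+M^T$). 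For such a $b$, $U(b)=M$ and, $b$ being positive, $C(b)=U(b)=M$ by Proposition \ref{prop-C-OU}. Hence $M$ is the crossing matrix of a positive pure $n$‑braid diagram, completing the proof of the proposition; combined with Propositions \ref{prop-CN6}, \ref{prop-3to2} and this one applied with $N=6$, Theorem \ref{thm-true6} follows.

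The main obstacle I anticipate is the converse's realization step: getting from ``$N(B)=2M$ for some pure projection $B$'' to ``there is a \emph{positive} pure diagram $b$ with $C(b)=M$''. The subtlety is that an arbitrary pure projection $B$ with the right CN matrix need not be the projection of any positive diagram with the prescribed OU splitting $M$; one needs the stronger statement — already packaged as Corollary 1 of \cite{AY-5} — that any pure projection $B$ together with a choice of non‑negative integer matrix $U$ satisfying $U+U^T=N(B)$ is realized by a pure diagram with OU matrix $U$. Taking $U=M$ (the strictly‑upper‑triangular representative, with the symmetric counterpart vanishing) makes that diagram positive, and Proposition \ref{prop-C-OU} then upgrades the OU matrix to the crossing matrix. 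So the proof reduces entirely to quoting the appropriate corollary of \cite{AY-5} with the correct choice of $U$, and the only care needed is to verify that this choice does make the diagram positive and pure.
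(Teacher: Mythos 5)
Your forward direction is essentially the paper's argument (positivity gives $C(b)=U(b)$, purity gives symmetry, and $N(B)=2M$ is T0 by Proposition \ref{prop-T0}, hence so is $M$), and your converse starts the same way: realize $2M$ as $N(B)$ for a pure projection $B$ using Conjecture \ref{conj-CN}. But the converse then has a genuine gap at exactly the step you flag as the ``main obstacle.'' You offer two routes and neither is completed. The first route --- choose the over/under information at each crossing of $B$ so that every crossing is positive --- is the right move (and is always possible, crossing by crossing; no consistency issue arises, and it has nothing to do with ``the lower-indexed strand passes over''), but you never compute the crossing matrix of the resulting diagram $b$. That computation is the heart of the paper's proof: since $B$ is \emph{pure}, each ordered pair of strands crosses left-to-right exactly as often as right-to-left, namely $M(i,j)$ times each; and at a positive crossing the over-strand is determined by the left/right direction, so $U(b)(i,j)=M(i,j)$, whence $C(b)=U(b)=M$ by Proposition \ref{prop-C-OU}. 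Without this purity argument you have a positive pure diagram but no reason its crossing matrix is $M$ rather than some other symmetric matrix with the same CN matrix.

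The second route --- invoking Corollary 1 of \cite{AY-5} to build a pure diagram with a prescribed OU matrix $U$ satisfying $U+U^T=N(B)=2M$ --- does not close the gap either, because a diagram with a prescribed OU matrix need not be positive: the OU matrix records only which strand is on top at each crossing, not the signs. Your assertion that ``taking $U=M$ makes that diagram positive'' is unjustified (already for the pure $2$-braid projection with two crossings, both the all-positive and the all-negative lifts have the same symmetric OU matrix). Moreover your parenthetical identification of $U$ with ``the strictly-upper-triangular representative'' is inconsistent with $U=M$, since $M$ is symmetric, and a strictly upper triangular OU matrix cannot come from a positive \emph{pure} diagram at all (its crossing matrix would have to be symmetric). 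So the proposal correctly reduces the problem to the right realization statement but does not supply the purity-plus-positivity bookkeeping that actually proves it.
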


\begin{proof}
Let $M=(M(i,j))$ be an $n\times n$ matrix. 
Assume that Conjecture \ref{conj-CN} is true for $n\leq N$. 
Suppose that $M$ is the crossing matrix of a positive pure $n$-braid diagram $b$. 
Since $b$ is positive, $M$ coincides with the OU matrix of $b$ by Proposition \ref{prop-C-OU}, and $M$ is a non-negative integer matrix. 
Since $b$ is pure, $M$ is symmetric by Proposition \ref{prop-T0-set}. 
By Proposition \ref{prop-3to2}, $M+M^T=2M$ is an even T0 matrix for $n\leq N$, and hence $M$ is a T0 symmetric matrix for $n\leq N$. 
Conversely, suppose that $M$ is a non-negative T0 symmetric matrix. 
Then, $M+M^T=2M$ is a non-negative even T0 (symmetric) matrix. 
By the assumption that Conjecture 3 is true for $n\leq N$, $2M$ is the CN matrix of some pure $n$-braid projection $B$ for $n\leq N$. 
The number of intersections with the strands $s_i$ and $s_j$ of $B$ is $2M(i,j)$. 
Let $B(i,j)$ be the number of intersections between the strands $s_i$ and $s_j$ of $B$ where $s_i$ crosses $s_j$ from left to right. 
(We remark that $B(i,j)+B(j,i)=2M(i,j)$.) 
Since $B$ is pure, $B(i,j)=B(j,i)=M(i,j)$. 
Let $b$ be a positive pure $n$-braid diagram whose projection is $B$. 
Then, for each crossing $c$ between the strands $s_i$ and $s_j$ of $b$, $s_i$ crosses $s_j$ from left to right at $c$ if and only if $s_i$ crosses under $s_j$ at $c$. 
Hence, the number of crossings between $s_i$ and $s_j$ of $b$ where $s_i$ crosses over $s_j$ coincides with the number of crossings between $s_i$ and $s_j$ of $b$ where $s_i$ crosses under $s_j$. 
Then, the $(i,j)$ entry of the OU matrix of $b$ is $B(i,j) \ (=M(i,j))$, i.e., the OU matrix of $b$ is $M$. Since $b$ is positive, the crossing matrix of $b$ is also $M$ by Proposition \ref{prop-C-OU}. 
This completes the proof of Proposition \ref{prop-3to1}.
\end{proof}

\noindent From Propositions \ref{prop-CN6} and \ref{prop-3to1}, we have the following corollary, which is an answer to Conjecture \ref{conj-C} for $n \leq 6$. 

\begin{corollary}
A $6 \times 6$ integer matrix $M$ is the crossing matrix of some positive pure $6$-braid if and only if $M$ is a non-negative integer T0 symmetric matrix. 
\label{cor-CM6}
\end{corollary}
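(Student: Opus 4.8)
The plan is to obtain this characterization as an immediate consequence of the two results already in place, exactly mirroring the reductions used for $n \le 5$ in \cite{AY-5}. The key observation is that Conjecture \ref{conj-CN} is now known for all $n \le 6$: Theorem \ref{thm-55} handles $n \le 5$ and Proposition \ref{prop-CN6} handles $n = 6$. Granting this, Proposition \ref{prop-3to1} applies verbatim with $N = 6$ and yields the truth of Conjecture \ref{conj-C} for $n \le 6$, which in the case $n = 6$ is precisely the statement of the corollary.

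Concretely, I would run the argument of Proposition \ref{prop-3to1} in the case $N=6$. For the ``only if'' direction, if $M$ is the crossing matrix of a positive pure $6$-braid diagram $b$, then $M = U(b)$ by Proposition \ref{prop-C-OU}, so $M$ is a non-negative integer matrix; moreover $M + M^T = N(B)$ for the projection $B$ of $b$, so $2M$ is even and T0 by Proposition \ref{prop-T0}, whence $M$ is T0; and $M$ is symmetric by Proposition \ref{prop-T0-set} since $b$ is pure. For the ``if'' direction, if $M$ is non-negative, T0 and symmetric, then $2M$ is a non-negative even T0 symmetric matrix, so by Proposition \ref{prop-CN6} it is the CN matrix of some pure $6$-braid projection $B$; taking a positive pure $6$-braid diagram $b$ with projection $B$, purity forces the number of crossings at which $s_i$ is over $s_j$ to equal the number at which $s_i$ is under $s_j$, so the OU matrix of $b$, and hence (again by Proposition \ref{prop-C-OU}) the crossing matrix of $b$, is $M$. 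Finally, combining this with Corollary \ref{cor-OU6} and Proposition \ref{prop-CN6} gives Theorem \ref{thm-true6}.

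There is essentially no obstacle left at this point: all the difficulty has been absorbed into Proposition \ref{prop-02-CN6} (equivalently Proposition \ref{prop-CN6}), whose proof rests on the exhaustive verification that each of the $4{,}824$ upper triangular T0 $(0,2)$-matrices of size $6$ contains one of the CN-realizable configurations of Examples \ref{ex-2} and \ref{ex-1} and Propositions \ref{prop-CN-form} and \ref{prop-CN-ladder}, or a reverse thereof. If I were building the argument from scratch, the step demanding the most care would be assembling a family of CN-realizable configurations rich enough to cover all $4{,}824$ matrices, with each configuration justified by the snake, hang-glider, loupe, or BW-ladder techniques of Sections \ref{section-formation} and \ref{section-BW}; once such a family is in hand, the coverage check is a finite (computer-assisted) computation, and the present corollary follows formally.
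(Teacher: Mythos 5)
Your proposal is correct and follows exactly the paper's route: the corollary is obtained by combining Proposition \ref{prop-CN6} (the $n=6$ case of Conjecture \ref{conj-CN}) with the reduction of Proposition \ref{prop-3to1} applied with $N=6$. The detailed argument you sketch for the two directions is the same as the paper's proof of Proposition \ref{prop-3to1}.
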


\medskip
\noindent We prove Theorem \ref{thm-true6}. \\

\noindent {\it Proof of Theorem \ref{thm-true6}.} \ It follows from Proposition \ref{prop-CN6}, Corollaries \ref{cor-OU6} and \ref{cor-CM6}. \qed

\section*{Acknowledgment}
The work of the second author was partially supported by the JSPS KAKENHI Grant Number JP21K03263. 
The work of the third author was partially supported by the JSPS KAKENHI Grant Number JP19K03508.

\begin{figure}[ht]
\centering
\includegraphics[width=13cm]{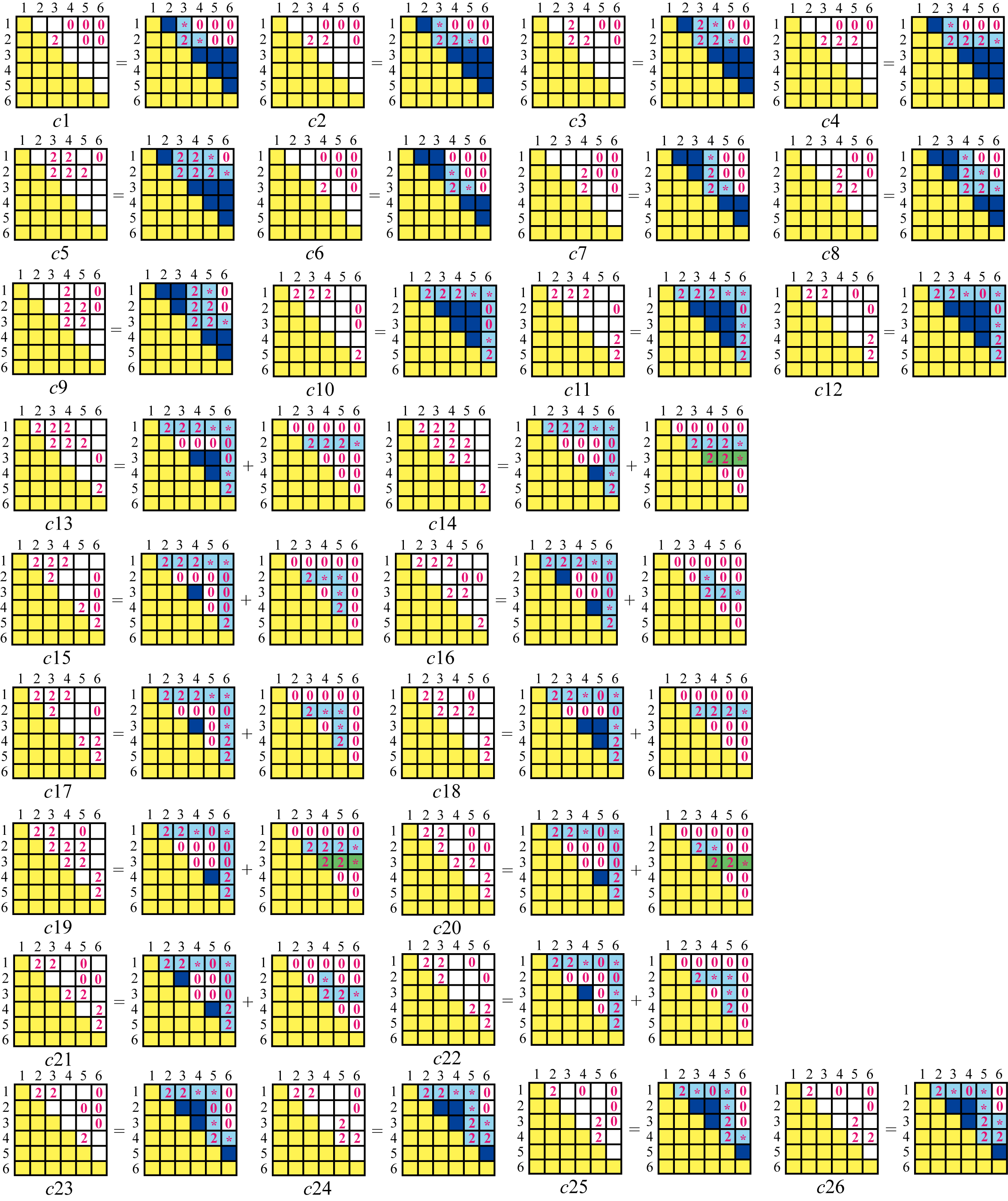}
\caption{The CN-realizable configurations $c1$ to $c26$ that are derived from the snake formation.}
\label{fig-table1}
\end{figure}

\begin{figure}[ht]
\centering
\includegraphics[width=13cm]{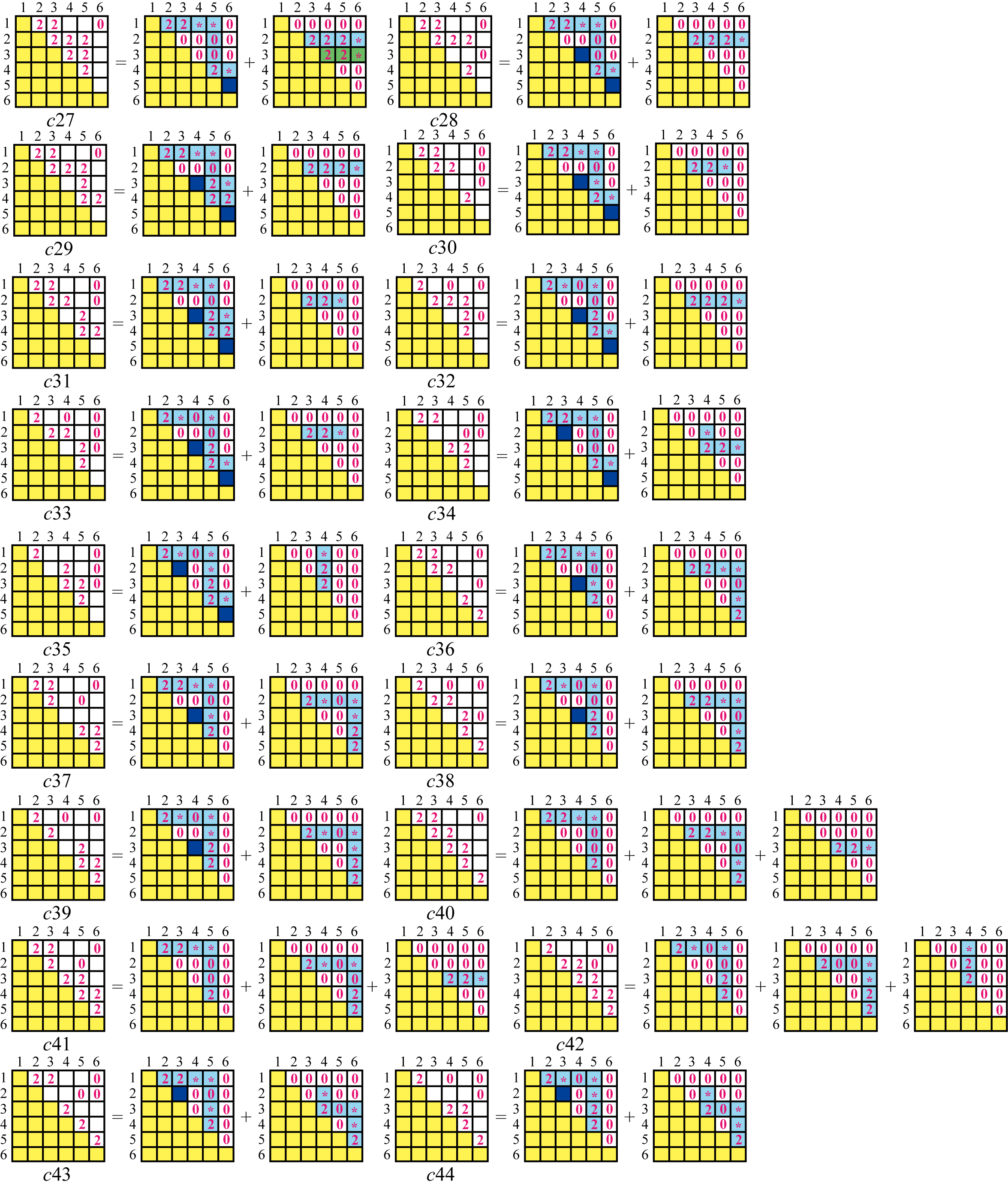}
\caption{The CN-realizable configurations $c27$ to $c44$ that are derived from the snake formation.}
\label{fig-table2}
\end{figure}

\begin{figure}[ht]
\centering
\includegraphics[width=13cm]{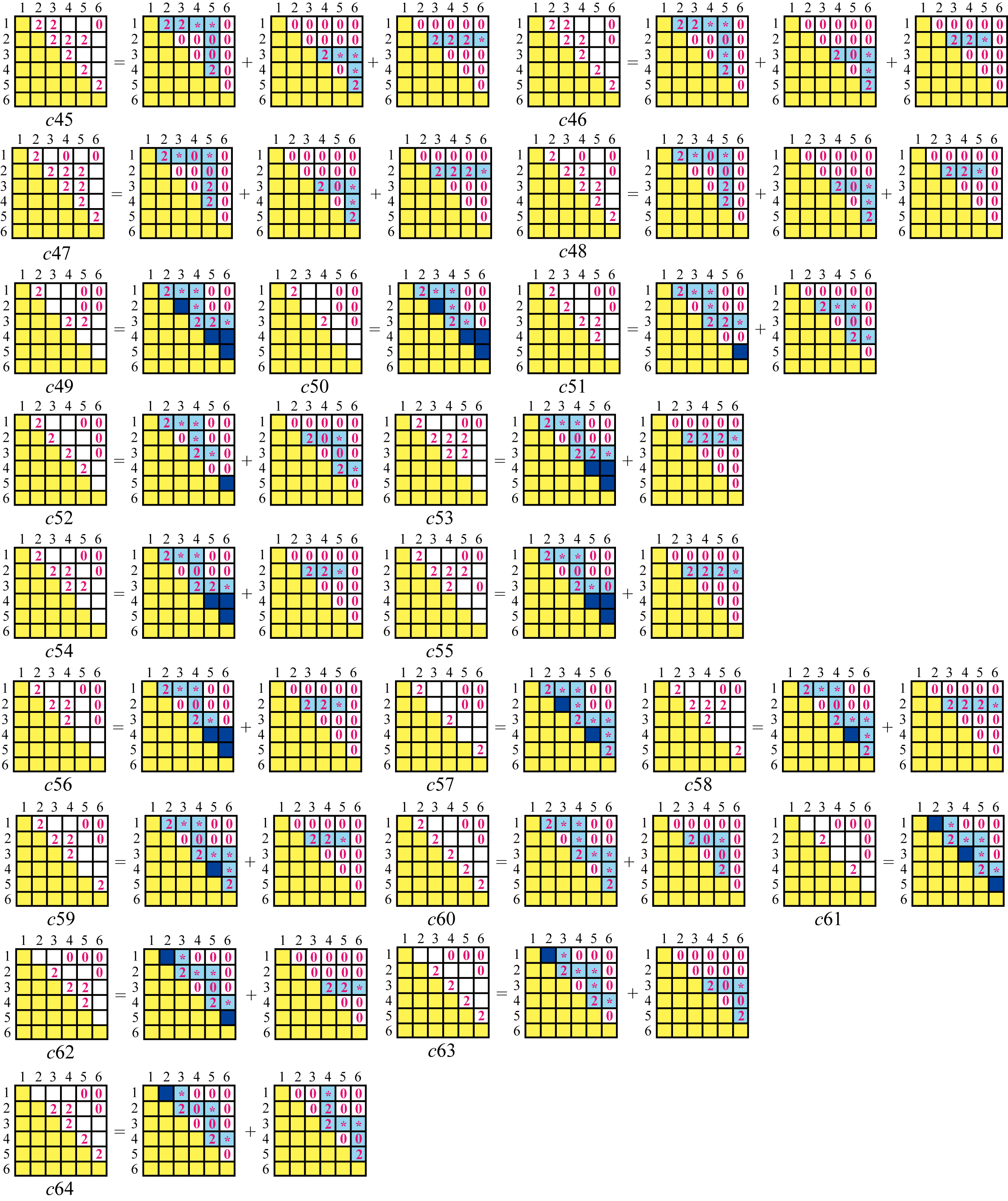}
\caption{The CN-realizable configurations $c45$ to $c64$ that are derived from the snake formation.}
\label{fig-table3}
\end{figure}

\begin{figure}[ht]
\centering
\includegraphics[width=13cm]{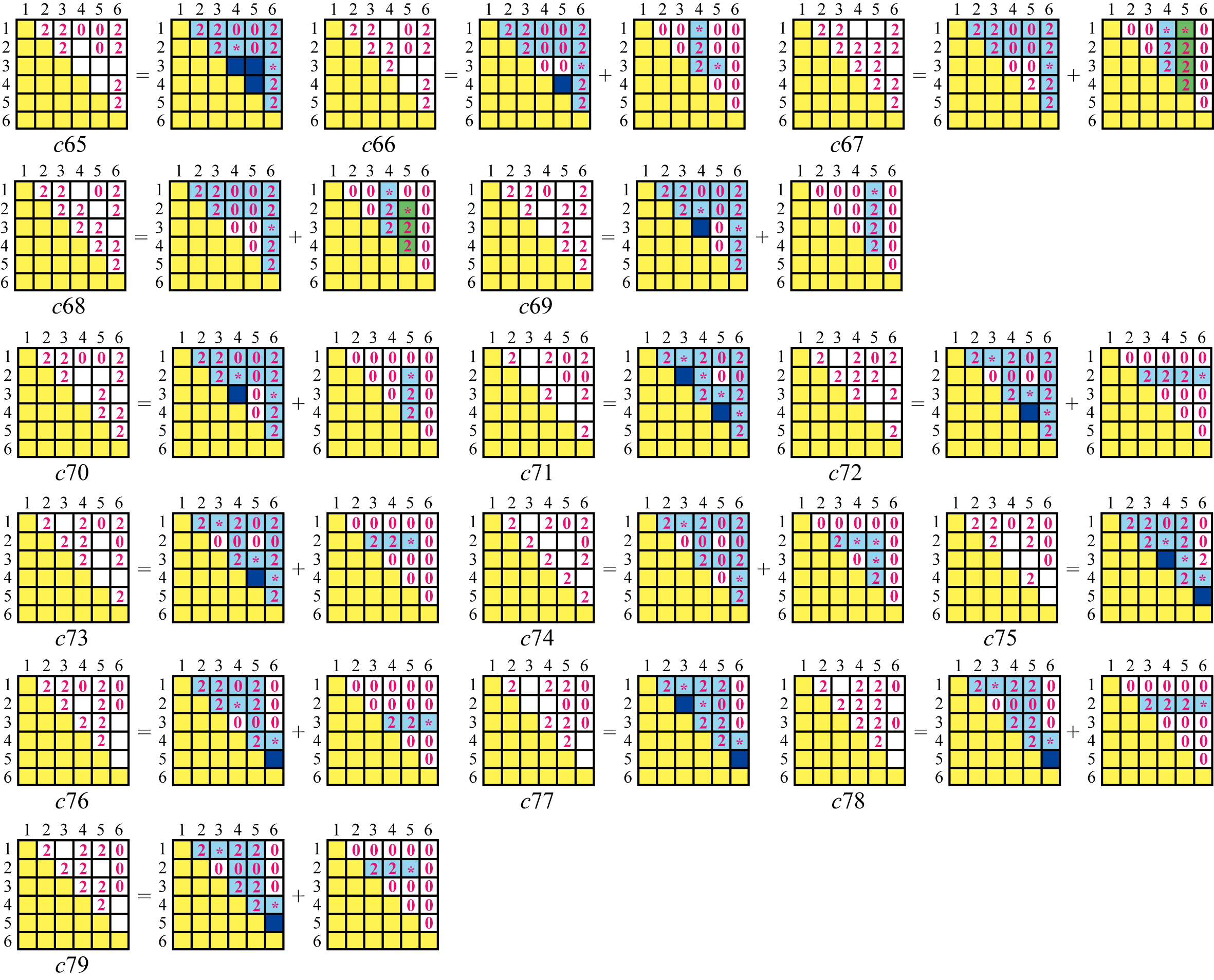}
\caption{The CN-realizable configurations $c65$ to $c79$ that are derived from the hang-glider and snake formation. 
The configurations $c65$ to $c70$ include the $H(1,6;2)$-fromation. 
The configurations $c71$ to $c74$ include the $H(1,6;3)$-formation. 
The configurations $c75$, $c76$ include the $H(1,5;2)$-formation. 
The configurations $c77$ to $c79$ include the $H(1,5;3)$-formation. }
\label{fig-table4}
\end{figure}

\begin{figure}[ht]
\centering
\includegraphics[width=13cm]{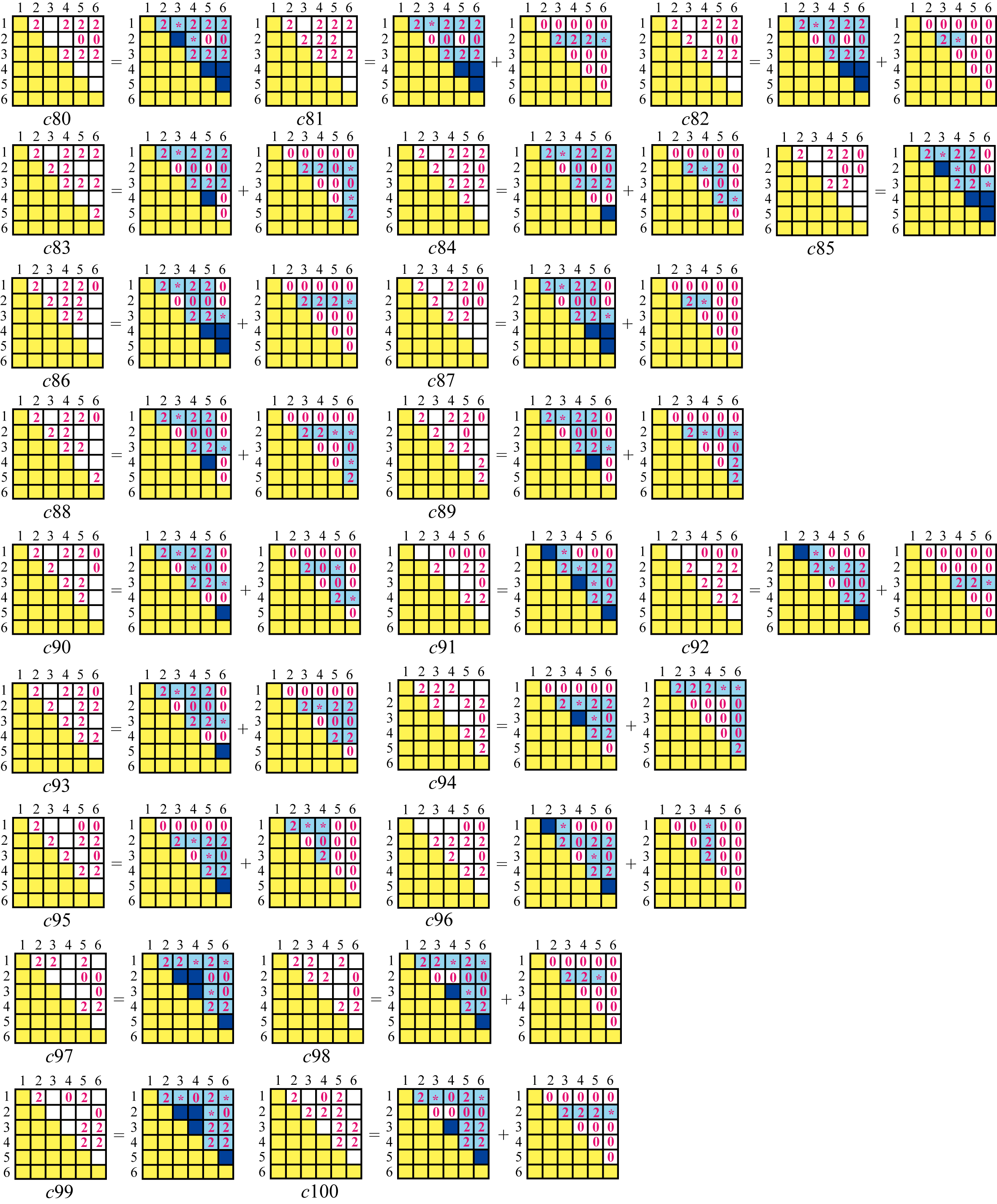}
\caption{The CN-realizable configurations $c80$ to $c100$ that are derived from the loupe and snake formations. 
The configurations $c80$ to $c84$ include the $L_1(1,6)$-fromation. 
The configurations $c85$ to $c90$ include the $L_1(1,5)$-formation. 
The configurations $c91$ to $c96$ include the $L_1(2,6)$-formation. 
The configurations $c97$, $c98$ include the $L_2(1,6)$-formation. 
The configurations $c99$, $c100$ include the $L_3(1,6)$-formation. }
\label{fig-table5}
\end{figure}

\begin{figure}[ht]
\centering
\includegraphics[width=13cm]{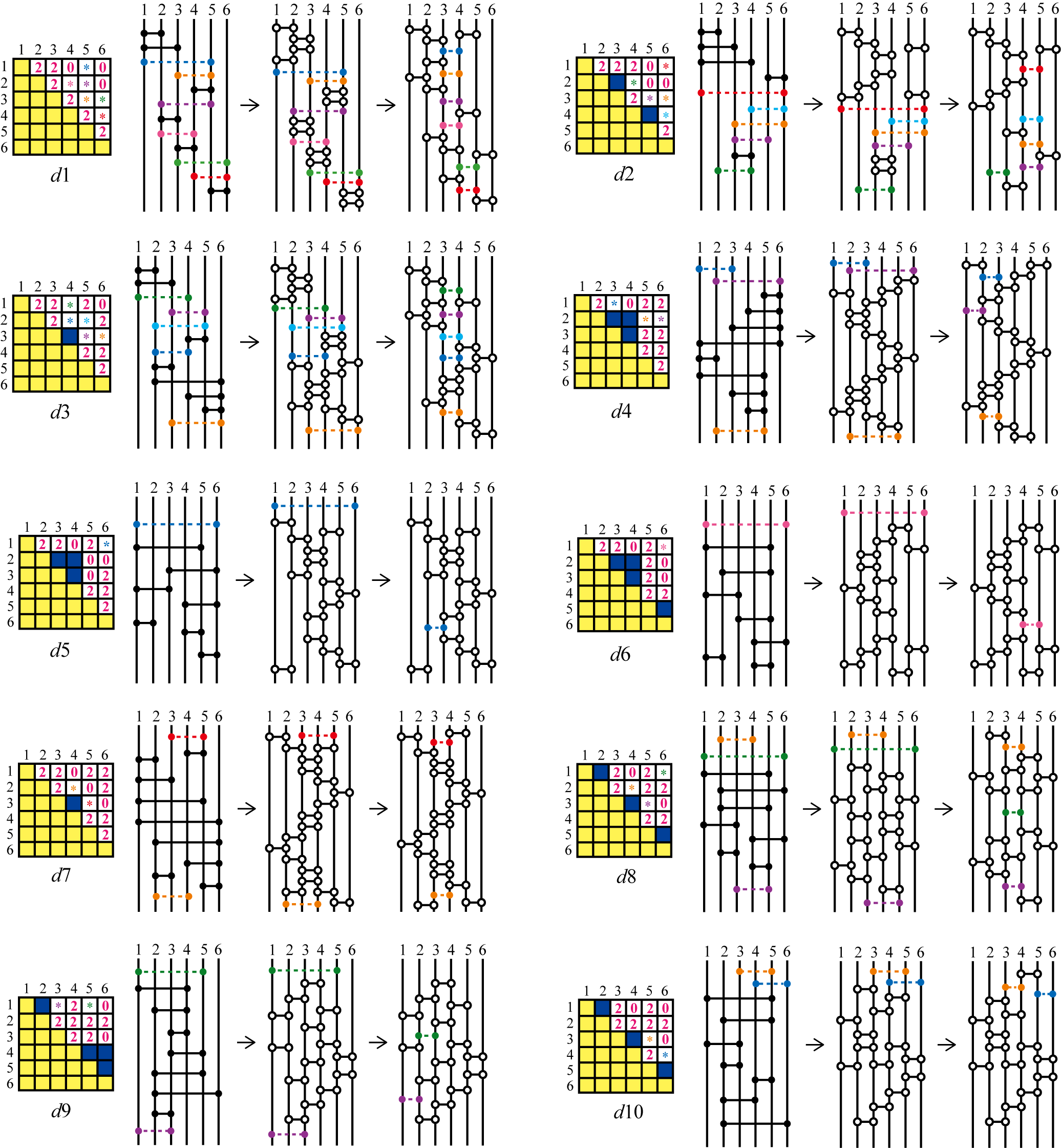}
\caption{The CN-realizable configurations $d1$ to $d10$ that are derived by the ladder diagram.}
\label{fig-d1}
\end{figure}

\begin{figure}[ht]
\centering
\includegraphics[width=13cm]{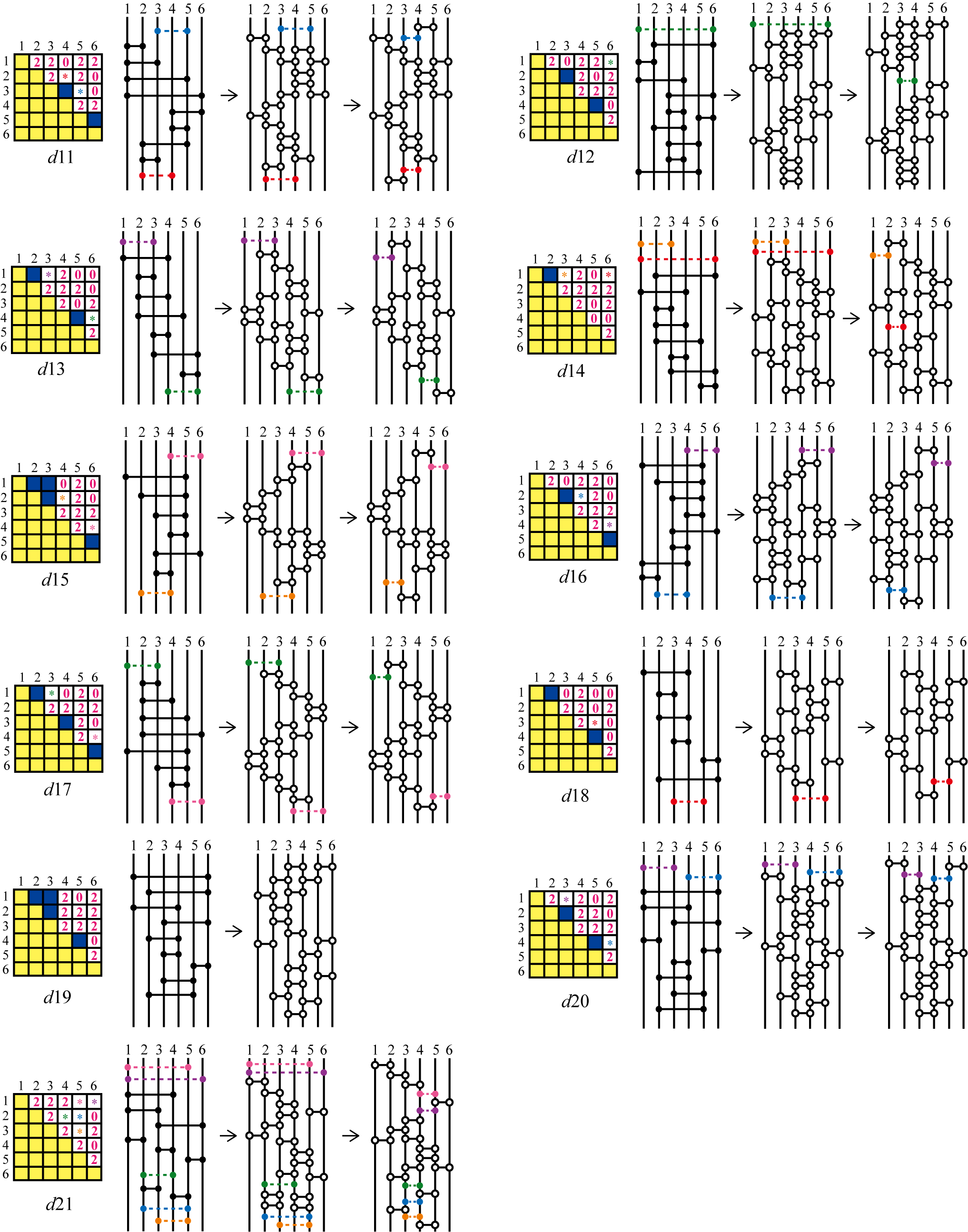}
\caption{The CN-realizable configurations $d11$ to $d21$ that are derived by the ladder diagram.}
\label{fig-d2}
\end{figure}


\begin{thebibliography}{99}

\bibitem{Artin} E. Artin, Theory of braids, Ann. Math. {\bf 48} (1947), 101--126. 

\bibitem{Bu} J. Burillo, M. Gutierrez, S. Krsti\'{c} and Z. Nitecki, Crossing matrices and Thurston's normal form for braids, Topl. Appl. {\bf 118} (2002), 293--308. 

\bibitem{El} E. Elrifai, H. Morton, Algorithms for positive braids, Quart. J. Math. Oxford {\bf 45} (2)(1994), 479 497.


\bibitem{Gu} M. Gutierrez and Z. Nitecki, Crossing matrix of positive braids, arXiv:1805.12189.



\bibitem{ASA} A. Shimizu, A. Gill and S. Joshi, A note on the unknotting number and the region nuknotting number of weaving knots, arXiv:2408.14938. 

\bibitem{AY-5} A. Shimizu and Y. Yaguchi, Characterization of the OU matrix of a braid diagram, to appear in Topol. Appl. (arXiv:2502.16035) 

\bibitem{AY-d} A. Shimizu and Y. Yaguchi, Determinant of the OU matrix of a braid diagram, J. Knot Theory Ramifications {\bf 34} (2025), 2550005. 

\bibitem{th} W. Thurston, Finite state algorithms for the braid group: Chapter 9 of Word processing in groups, D. B. A. Epstein, J. W. Cannon, D. F. Holt, S. V. F. Levy, M. S. Patterson and W. P. Thurston. Jones and Bartlett, Boston and London 1992.


\end{thebibliography}
\end{document}